\numberwithin{equation}{section}
\numberwithin{figure}{section}
\theoremstyle{plain}
\newtheorem{theorem}{Theorem}[section]
\newtheorem{lemma}[theorem]{Lemma}
\newtheorem{proposition}[theorem]{Proposition}
\newtheorem{corollary}[theorem]{Corollary}
\theoremstyle{plain}
\theoremstyle{remark}
\newtheorem{remark}[theorem]{Remark}
\begin{document}
\date{}

\title[Bessel functions and Weyl's law]{Bessel functions and Weyl's law for balls and spherical shells}

\author{Jingwei Guo}
\address{School of Mathematical Sciences\\
University of Science and Technology of China\\
Hefei, 230026\\ P.R. China}
\email{jwguo@ustc.edu.cn}

\author{Tao Jiang}
\address{School of Mathematics and Statistics\\
Anhui Normal University\\
Wuhu, 241002\\ P.R. China}
\email{tjiang@ahnu.edu.cn}

\author{Zuoqin Wang}
\address{School of Mathematical Sciences\\
University of Science and Technology of China\\
Hefei, 230026\\ P.R. China}
\email{wangzuoq@ustc.edu.cn}

\author{Xuerui Yang}
\address{Department of Mathematics\\
University of Illinois at Urbana-Champaign\\
Urbana, IL, 61801\\USA}
\email{xueruiy3@illinois.edu}

\subjclass[2020]{35P20, 42B20, 11P21, 33C10}

\keywords{Weyl's law, Dirichlet/Neumann Laplacian eigenvalues, decoupling theory,  weighted lattice point counting, (cross-products of) ultraspherical Bessel functions.}

\begin{abstract}
The purpose of this paper is twofold. One is to investigate the properties of the zeros of cross-products of Bessel functions or derivatives of ultraspherical Bessel functions, as well as the properties of the zeros of the derivative of the first-kind ultraspherical Bessel function. The properties we study include asymptotics (with uniform and nonuniform remainder estimates), upper and lower bounds and so on. In addition, we provide the number of zeros of a certain cross-product within a large circle and show that all its zeros are real and simple.
These results may be of independent interest.

The other is to investigate the Dirichlet/Neumann Laplacian on balls and spherical shells in $\mathbb{R}^d$  ($d\geq 2$) and the remainder of the associated Weyl's law. We obtain new upper bounds in all dimensions, both in the Dirichlet and Neumann cases. The proof relies on our studies of Bessel functions and the latest development in the Gauss circle problem, which was driven by the application of the emerging decoupling theory of harmonic analysis.
\end{abstract}

\maketitle

\tableofcontents

\section{Introduction} \label{intro}

Consider the Laplacian associated with a bounded Euclidean domain $\mathscr{D}\subset \mathbb{R}^d$ ($d\geq 2$), with either Dirichlet or Neumann boundary conditions. Denote by  $\mathscr{N}_\mathscr{D}(\mu)=\#\{j\ | \lambda_j \le \mu\}$ the corresponding eigenvalue counting function, where $\lambda_j^2$ are the Dirichlet/Neumann eigenvalues. The Weyl remainder $\mathscr{R}_\mathscr{D}(\mu)$ is defined to be the quantity in the following expression:
\begin{equation*}
	\mathscr{N}_\mathscr{D}(\mu)=\frac{\omega_d}{(2\pi)^{d}} \left|\mathscr{D}\right|\mu^d\mp \frac{\omega_{d-1}}{4(2\pi)^{d-1}}\left|\partial\mathscr{D}\right| \mu^{d-1}+\mathscr{R}_\mathscr{D}(\mu),
\end{equation*}
where $\omega_k$ denotes the volume of the unit ball in $\mathbb{R}^k$ and the sign ``$-$'' (resp., ``$+$'') refers to the Dirichlet (resp., Neumann) boundary condition. The study of such an eigenvalue counting function was initiated by Weyl \cite{weyl:1912, weyl:1913}.  Weyl's conjecture claims that  the remainder $\mathscr{R}_\mathscr{D}(\mu)$ is of order $o(\mu^{d-1})$ as $\mu \to \infty$. Melrose \cite{Mel:1980} and Ivrii \cite{Ivrii1980} proved this conjecture for manifolds with boundary under certain ``non-periodicity condition" on the billiard flow, namely the set of periodic billiard trajectories has measure zero. It is still unknown whether the non-periodicity condition holds for any bounded Euclidean domain (with sufficiently nice boundary).

A natural question is to study the asymptotic order of the remainder $\mathscr{R}_\mathscr{D}(\mu)$.
It is known that there is no universal constant $\kappa<d-1$  so that $\mathscr{R}_\mathscr{D}(\mu)=O(\mu^\kappa)$ holds for all $\mathscr{D}\subset \mathbb{R}^d$. In fact, for each $\kappa <1$ Lazutkin and Terman \cite{Lazu:1982} had constructed convex planar domains with specific billiard dynamics so that the remainder is not $O(\mu^\kappa)$.

On the other direction, however, the remainder can be much smaller than $o(\mu^{d-1})$ for specific domains in $\mathbb{R}^d$. For example, for the Dirichlet Laplacian on disks in $\mathbb{R}^2$, Kuznetsov and  Fedosov \cite{kuz:1965} and Colin de Verdi\`ere \cite{colin:2011} showed that the remainder is $O(\mu^{2/3})$. This bound was recently improved to $O(\mu^{2/3-1/495})$ in \cite{GWW2018} and to
\begin{equation*}
	O\left(\mu^{\frac{131}{208}}(\log \mu)^{\frac{18627}{8320}}\right)
\end{equation*}
in \cite{GMWW:2019}. Huxley \cite{Huxley:2024} obtained the same bound for both the Dirichlet and Neumann Laplacian on disks.  Kuznetsov proved the bound $O(\mu^{2/3})$ for ellipses in \cite{Kuznecov:1965} and  considered planar domains of separable variable type in \cite{Kuznecov:1966}. In \cite{GMWW:2019} we studied  the Dirichlet Laplacian on annuli and obtained the bound $O(\mu^{2/3})$ in general and the bound $O(\mu^{131/208}(\log \mu)^{18627/8320})$ under the rationality assumption on the ``slope" $\arccos(r/R)/\pi$, where $r$ and $R$ are the inner and outer radii of the annulus. The Dirichlet Laplacian on balls in $\mathbb{R}^d$ ($d\geq 3$) was studied in \cite{Guo} and a bound
\begin{equation*}
	O\left(\mu^{d-2+\frac{131}{208}}(\log \mu)^{\frac{18627}{8320}} \right)
\end{equation*}
was obtained.

In this paper, we study the Dirichlet/Neumann Laplacian on balls and spherical shells in $\mathbb{R}^d$. Throughout this paper we denote by
\begin{equation*}
	\mathbb{S}=\mathbb{S}_{r,R}^d=\{x\in\mathbb{R}^d : r<|x|<R \}  
\end{equation*}
a spherical shell in $\mathbb{R}^d$, where $r$ and $R$ are positive numbers with $r<R$, by
\begin{equation*}
	\mathbb{B}=\mathbb{B}_R^d=\{x\in\mathbb{R}^d : |x|<R\} 
\end{equation*}
a ball in $\mathbb{R}^d$, and by
\begin{equation*}
\theta^*=0.3144831759741\cdots
\end{equation*}
the opposite of the unique solution in the interval $[-0.35,-0.3]$ to the equation \eqref{definition of theta} (see Theorem \ref{expo sum} below). We obtain the following bounds.

\begin{theorem}\label{specthm}
	Let $\mathscr{R}_\mathscr{D}(\mu)$ be the  Weyl remainder associated with the domain $\mathscr{D}$ for either   Dirichlet or  Neumann eigenvalues.  For  $d\geq 2$ and $\epsilon>0$, we have
\begin{enumerate}
\item
	\begin{equation*}
		\mathscr{R}_\mathbb{B}(\mu)=O_{\epsilon}\left(\mu^{d-2+2\theta^*+\epsilon}\right);
	\end{equation*}
\item
	\begin{equation*}
		\mathscr{R}_\mathbb{S}(\mu)=O\left(\mu^{d-2+\frac{2}{3}}\right)
	\end{equation*}
	which can be improved to
	\begin{equation*}
		\mathscr{R}_\mathbb{S}(\mu)=O_{\epsilon}\left(\mu^{d-2+2\theta^*+\epsilon}\right)
	\end{equation*}
	if $\pi^{-1}\arccos(r/R)\in \mathbb{Q}$.
\end{enumerate}
\end{theorem}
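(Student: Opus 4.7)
My strategy is to reduce both bounds to weighted lattice-point-on-curve counts and then apply the exponential sum estimate of Theorem~\ref{expo sum} (with exponent $\theta^*$) in the ball case and the rational-slope shell case, falling back to a Sierpi\'nski-type slicewise bound in the general shell case. \emph{Separation of variables}: in spherical coordinates on $\mathbb{B}_R^d$, an eigenfunction splits as $Y_\ell(\omega)\rho(|x|)$, where $Y_\ell$ is a spherical harmonic of degree $\ell$ with multiplicity $N(d,\ell)=O(\ell^{d-2})$ and $\rho$ solves the ultraspherical Bessel equation of order $\nu=\ell+(d-2)/2$. Dirichlet eigenvalues are therefore $R^{-1}j_{\nu,k}$ with $j_{\nu,k}$ the $k$-th positive zero of $J_\nu$, and Neumann eigenvalues correspond to the zeros of the relevant derivative combination studied earlier in this paper. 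For the shell $\mathbb{S}_{r,R}^d$ the radial part is a linear combination of $J_\nu$ and $Y_\nu$, and both boundary conditions lead to zeros of a cross-product whose distribution was analysed earlier. In every case
\[
\mathscr{N}_\mathscr{D}(\mu)=\sum_{\ell \ge 0} N(d,\ell)\, \#\{k\ge 1:\ z_{\nu,k}\le \mu\}.
\]

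\emph{Reduction to weighted lattice-point counting.} Using the asymptotic expansion of $z_{\nu,k}$ with uniform remainder established in the first half of the paper, I would write $\#\{k:z_{\nu,k}\le \mu\}=\phi_\mu(\nu)-\tfrac12-\psi(\phi_\mu(\nu))+\text{(smaller)}$, where $\psi(t)=t-\lfloor t\rfloor-\tfrac12$. Summing over $\ell$ and applying Euler--Maclaurin to the smooth part reproduces the two Weyl main terms (absorbing the surface contribution requires keeping at least the second term of the Bessel-zero expansion) and isolates a remainder of the form
\[
\mathscr{R}_\mathscr{D}(\mu) = -\sum_{\ell \le c\mu} N(d,\ell)\, \psi\bigl(\phi_\mu(\nu)\bigr) + O_\epsilon(\mu^{d-2+\epsilon}),
\]
where, up to additive constants, $\phi_\mu(\nu)=\pi^{-1}\mu R\sqrt{1-\nu^2/(\mu R)^2}$ for the ball and a signed sum of two such square-root phases at radii $r$ and $R$ for the shell. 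A dyadic decomposition in $\ell$ together with a truncated Fourier expansion of $\psi$ then reduces matters to bounding
\[
\sum_{\ell \sim L} \ell^{d-2}\, e^{2\pi i h \phi_\mu(\ell)}, \qquad h\ne 0,
\]
uniformly in $h$ and $L$.

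\emph{Applying the two exponent inputs.} For the ball these exponential sums are exactly the objects bounded by Theorem~\ref{expo sum}; summing over $L\lesssim \mu$ and $1\le h\le \mu^{O(1)}$ produces the desired $O_\epsilon(\mu^{d-2+2\theta^*+\epsilon})$. For the shell, the cross-product phase involves both $\arccos(\nu/(\mu R))$ and $\arccos(\nu/(\mu r))$, and unless the ratio of these is rational one cannot reparametrise to a single smooth phase compatible with Theorem~\ref{expo sum}. Under the hypothesis $\pi^{-1}\arccos(r/R)\in \mathbb{Q}$ the two phases become commensurable on a common period, and Theorem~\ref{expo sum} again applies and gives the improved bound. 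Without that hypothesis the best one can do per $\nu$-slice is Sierpi\'nski's $O(\mu^{2/3})$, which summed against the weight $\ell^{d-2}$ yields the unconditional $O(\mu^{d-2+2/3})$.

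\emph{Main obstacle.} The delicate point is the Bessel transition region $|\nu - \mu R|\lesssim \mu^{1/3}$, where the WKB-type zero expansion breaks down and the phase $\phi_\mu$ becomes singular; I plan to excise this band and estimate its contribution directly from the upper and lower bounds and the zero-counting results obtained earlier in the paper, checking that the total contribution is absorbed into $O(\mu^{d-2+2\theta^*+\epsilon})$. A secondary technical issue, specific to the shell, is justifying that the cross-product zeros admit a clean term-by-term asymptotic of the form used above; this is precisely what the structural results on cross-products (reality, simplicity, the count within large circles, and asymptotics with uniform remainders) proved earlier in the paper are designed to supply.
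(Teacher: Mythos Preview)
Your overall strategy matches the paper's: separation of variables, reduction via the uniform zero asymptotics to a weighted planar lattice count in a dilate $\mu\Omega$ of a fixed domain, and then Theorem~\ref{expo sum} for the rounding error sums. One minor correction: the phase is not $\pi^{-1}\mu R\sqrt{1-\nu^2/(\mu R)^2}$ but $\mu R\,g(\nu/(\mu R))$ with $g(u)=(\sqrt{1-u^2}-u\arccos u)/\pi$; the $-u\arccos u$ term matters both for recovering the second Weyl term and for the derivative sizes needed in Theorem~\ref{expo sum}.

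The substantive gap is your account of why $\pi^{-1}\arccos(r/R)\in\mathbb Q$ is the relevant hypothesis. It has nothing to do with commensurability of the two radial phases, and an attempt to implement that idea would not go through. In the paper's setup the boundary of $\Omega$ is the graph of $G$, which is $C^1$ everywhere but whose curvature blows up like $(r-x)^{-1/2}$ as $x\to r^-$, at the join point $J=(r,G(r))$. Theorem~\ref{expo sum} requires $|F^{(j)}|\asymp 1$ for $j\le 3$, and if one counts along axis-parallel lines this fails near $J$; only van der Corput's second-derivative bound survives there, producing the unconditional $O(\mu^{2/3})$. The tangent at $J$ has slope $G'(r)=-\pi^{-1}\arccos(r/R)$. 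When this equals a rational $-a/q$, one can instead count along the lines $ax+qy=t$: the integer lattice meets each such line in an arithmetic progression, and after reparametrising the boundary by its inverse $T$ in this oblique frame the required derivatives stay bounded near $J$ (see Lemma~\ref{theorem:no-in-D} and \cite[Lemma~4.7]{GMWW:2019}), so Theorem~\ref{expo sum} applies and yields the exponent $2\theta^*$. Without rationality no sublattice is compatible with the tangent direction and this change of coordinates is unavailable. The transition band $|\nu-R\mu|\lesssim\mu^{1/3}$ you flag is real but comparatively harmless (handled in Lemma~\ref{lemma4.1}); it is the curvature singularity at $\nu\approx r\mu$ that drives the shell dichotomy.
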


\begin{remark}
For comparison between these bounds and previous ones, it is worth noting that $2\theta^*=0.628966\cdots$, whereas $131/208=0.629807\cdots$.

By using Huxley's bound in \cite[Proposition 3]{Huxley:2003} (which has previously been used in \cite{GMWW:2019, Guo, Huxley:2024}), and combining the work presented in Sections \ref{zeros}--\ref{sec5}, we can already obtain new results. Specifically, we can extend the results in \cite{GMWW:2019} from annuli to spherical shells in any dimension, covering both the Dirichlet and Neumann scenarios. Furthermore, we can extend the results in \cite{Guo} for balls to include the Neumann case.

In fact, we can go even further. By using the latest development in the Gauss circle problem, which was driven by the application of the emerging decoupling theory of harmonic analysis, rather than relying on Huxley's \cite[Proposition 3]{Huxley:2003}, we can obtain new bounds in all dimensions,  both in the Dirichlet and Neumann cases, by improving the exponent from $131/208$ to $2\theta^*$.
\end{remark}

We would like to take this chance to explain one difference between the cases $d=2$ and $d>2$.
It is well known that the eigenvalue counting problem for certain Euclidean domains (whose billiard flows are completely integrable) can be converted into some ``almost lattice point problems'' (with each problem associated with a different domain of the same dimension), essentially involving the counting of lattice points subject to certain translations. Even thought the planar domain for the corresponding lattice point problem could be bad (non-convex, with cusp points, etc.) and despite the potential presence of complicated translations, it is still possible to adapt the methods/arguments developed in number theory and harmonic analysis for the Gauss circle problem (which counts lattice points within disks) over the past 100 years, and achieve a satisfactory asymptotic bound. As a result, one gets an equally nice bound for  the eigenvalue counting problem, as alluded to above. For the case $d=2$, this idea was used by many authors (c.f.  \cite{kuz:1965, Kuznecov:1965, Kuznecov:1966, colin:2011, GWW2018, GMWW:2019, FLPS:2023, Huxley:2024}) to study the Dirichlet/Neumann eigenvalue counting function  of disks, annuli, ellipses, etc. In \cite{RWY} Rudnick, Wigman and Yesha also studied the  Robin eigenvalue counting function of the disk via this method.

For the case  $d>2$, one may hope to mimic the aforementioned strategy to convert the eigenvalue counting problem for balls and spherical shells to some almost lattice point problems associated with specific domains in $\mathbb R^d$, and then by adapting existing techniques in handling lattice point problems within $d$-dimensional domains (like balls, ellipsoids, etc.) to get equally nice asymptotic bounds (e.g., achieving $O(\mu^{d-2})$ for $d>4$). However, while it is true that the eigenvalues still correspond to ``almost lattice points'', it seems that the existing techniques employed to address lattice point problems within d-dimensional domains are not applicable to the specific domains encountered here. In fact, according to Eswarathasan, Polterovich and Toth's \cite[Proposition 1.8]{EPT:2016}, there is a ``lower bound on average" for balls $\mathbb{B}$ in $\mathbb{R}^d$ ($d\geq 2$),
\begin{equation*}
	\frac{1}{\mu}\int_{\mu}^{2\mu}\left|\mathscr{R}_\mathbb{B}(\tau) \right|\,\textrm{d}\tau\ge c \mu^{d-2+\frac{1}{2}},
\end{equation*}
which is much larger than the well-known ``lattice point counting remainder" for balls (e.g., $O_\varepsilon(\mu^{d-2+\varepsilon})$ for $d=4$ and $O(\mu^{d-2})$ for $d>4$). An interesting question would be whether the true order of the error term  for the eigenvalue counting problem for balls is, for any $\varepsilon>0$,
\begin{equation*}
	O_{\varepsilon}\left(\mu^{d-2+\frac{1}{2}+\varepsilon} \right).
\end{equation*}

So instead of searching for a $d$-dimensional solution, we will convert the eigenvalue counting problems under consideration to some  ``weighted'' planar   lattice point  counting problems. This method was used by the first author \cite{Guo} to study the Dirichlet eigenvalue counting function for balls, and by Filonov, Levitin,  Polterovich and Sher \cite{FLPS:2023} in confirming Polya's conjecture for balls of dimension $d \ge 3$ (in the Dirichlet case).

More specifically, the procedure of proving Theorem \ref{specthm} is as follows. We first derive approximations of eigenvalues with \textit{uniform} error estimates based on our investigation on the zeros of various expressions of Bessel related functions. Following this, we relate the eigenvalue counting problems to certain weighted lattice point counting problems associated with two types of special planar domains, with weights coming from multiplicities of eigenvalues. At last we count lattice points by applying estimates obtained from analytic number theory and harmonic analysis, and conclude the problems with satisfactory  remainder estimates.

One major difficulty we encounter lies in approximating eigenvalues with \textit{uniform} error terms. The eigenvalues we aim to study can be determined as the squares of the zeros of certain cross-products of Bessel functions or  derivatives of ultraspherical Bessel functions, as well as the squares of the zeros of the derivative of the ultraspherical Bessel function of the first kind. Notice that Bessel functions and ultraspherical Bessel functions are widely used in mathematics, physics and engineering science to analyze boundary value problems with spherical or cylindrical geometry. Extensive research results have been obtained regarding them. To mention a few examples, McMahon \cite{mcmahon:1894} in 1894 gave asymptotics of the zeros of the Bessel and certain related functions (see also \cite[P. 371 and 441]{abram:1972}); Cochran \cite{cochran:1964, Cochran:1966a, Cochran:1966} in the 1960s examined properties (including asymptotics, analyticity, etc.) of the zeros of cross-products of Bessel functions and their derivatives (see also \cite[P. 374]{abram:1972}); Filonov, Levitin, Polterovich and Sher \cite{FLPS:2024} very recently obtain some nice results on uniform enclosures for the phase and zeros of Bessel functions and their derivatives. However, to our knowledge the cross-product of derivatives of ultraspherical Bessel functions  has not been well studied so far, although its application in physics becomes increasingly important. We also find that known asymptotics of aforementioned zeros, if any, are not of the type that we require with uniform error terms.

In this paper, the properties of zeros we study include asymptotics (with both uniform and nonuniform remainder estimates), upper and lower bounds and so on. Here we will briefly explain some of the results we have obtained by taking the zeros $x''_{\nu,k}$ of the cross-product of derivatives of ultraspherical Bessel functions $j_{\nu,\delta}'(Rx)y_{\nu,\delta}'(rx)-j_{\nu,\delta}'(rx)y_{\nu,\delta}'(Rx)$ (see \eqref{eigenequation1}) as an example. For detailed results, please refer to the subsequent sections. For any fixed $\nu\geq |\delta|$ and sufficiently large $k$, we give in Theorem \ref{thm2.20} that
\begin{equation*}
	x''_{\nu,k}=\frac{\pi}{R-r}k+O_{\nu}\left(\frac{1}{k} \right).
\end{equation*}
(See Theorem \ref{thm4.12} for an analogous result of the zeros of the derivative of the ultraspherical Bessel function.) In the special case when $\delta=0$ (that is, ultraspherical Bessel functions are simply the usual Bessel functions), this asymptotics can be derived directly from \cite[9.5.28--9.5.31 on P. 374]{abram:1972}. The implicit constant in the error term is not uniform in $\nu$. However, the uniformity of the implicit constant in $\nu$ and $k$ is vital for the eigenvalue counting problems. After a somewhat long and technical computation, we manage to achieve such a uniformity. For example, we give in Theorem \ref{approximation} that
\begin{equation*}
	x''_{\nu, k}=F(\nu,k)+O\left((\nu+k)^{-1}\right)
\end{equation*}
in certain range of $x''_{\nu,k}$, where $F$ is a fixed function homogeneous of degree one. Theorem \ref{approximation} contains results in all ranges of $x''_{\nu,k}$. See Theorem \ref{thm4.11} for an asymptotics of the zeros of the derivative of the ultraspherical Bessel function with uniform error terms.

Besides asymptotics, we also obtain upper and lower bounds of zeros. See Propositions \ref{case0}, \ref{prop2}, and \ref{prop4.10}. In addition, we provide in Section \ref{sec3} the number of the zeros of the cross-product of derivatives of ultraspherical Bessel functions within a large circle and show that all its zeros are real and simple.

Apart from the difficulty of approximating eigenvalues, another difficulty of extending planar results (like those in \cite{GMWW:2019}) to high dimensional ones lies in handling the varying multiplicities of eigenvalues. Our resolution to this (as did in \cite{Guo}) is to transfer the multiplicities to weights of lattice points correspondingly, that is, different lattice points may be counted for different numbers of times. As a result, we have to deal with certain weighted planar lattice point counting problems. We then solve them by decomposing them into finitely many standard lattice point counting problems without weights but associated with planar domains of decreasing sizes. For details, please refer to Section \ref{reduction-sec}.

The novel exponent $2\theta^*$ in Theorem \ref{specthm} arises from the application  of the latest development in the Gauss circle and Dirichlet divisor problems, achieved by Li and the last author in \cite{LY2023}, to the lattice point counting problems encountered in  Section \ref{sec5}. Inspired by new ideas presented by Bourgain in \cite{Bourgain:2017} and by Bourgain and Watt in \cite{BW2018, BWpreprint}, Li and the last author combined recent advancements of the decoupling theory, made by Guth and Maldague \cite{GM:2022}, with results on some diophantine counting problems to improve results on the first spacing problem of the circle and divisor problems. Furthermore, by incorporating Huxley's work in \cite{Huxley:2003} on the second spacing problem, they obtained their improved exponential sum estimates in \cite[Theorem 4.2]{LY2023}.  See Section \ref{sec6} for more elaboration. Based on their work, we have formulated an estimate for the rounding error sums, which holds within a limited range but under weaker assumptions. This result, presented independently in Section \ref{sec6}, is particularly applicable to our problems.


\emph{Notations:} For functions $f$ and $g$ with $g$ taking nonnegative real values,
$f\lesssim g$ means $|f|\leqslant Cg$ for some constant $C$. If $f$
is nonnegative, $f\gtrsim g$ means $g\lesssim f$. The notation
$f\asymp g$ means that $f\lesssim g$ and $g\lesssim f$. If we write a subscript (for instance $\lesssim_{\sigma}$), we emphasize that the implicit constant depends on that specific subscript. We set $\mathbb{Z}_+:=\mathbb{N}\cup\{0\}$.


\section{Zeros of cross-products of Bessel functions}\label{zeros}

Let $0<r<R<\infty$ be two given numbers. For any $\nu\geq 0$ we would like to study positive zeros of cross-product combinations of Bessel functions
\begin{equation}
\mathfrak{f}_{\nu}(x):=J_{\nu}(Rx)Y_{\nu}(rx)-J_{\nu}(rx)Y_{\nu}(Rx),  \label{eigenequation}
\end{equation}
\begin{equation}
\mathfrak{g}_{\nu}(x):=J_{\nu}'(Rx)Y_{\nu}'(rx)-J_{\nu}'(rx)Y_{\nu}'(Rx)  \label{eigenequation2}
\end{equation}
and
\begin{equation}
\mathfrak{h}_{\nu,\delta}(x):=j_{\nu}'(Rx)y_{\nu}'(rx)-j_{\nu}'(rx)y_{\nu}'(Rx),   \label{eigenequation1}
\end{equation}
where $J_{\nu}$ and $Y_{\nu}$ are Bessel functions of the first and second kind of order ${\nu}$,
\begin{equation*}
j_{\nu}(x)=j_{\nu,\delta}(x):=x^{-\delta}J_{\nu}(x)
\end{equation*}
and
\begin{equation*}
y_{\nu}(x)=y_{\nu,\delta}(x):=x^{-\delta}Y_{\nu}(x)
\end{equation*}
with $\delta\in\mathbb{R}$ and $\nu\geq |\delta|$.\footnote{See Remark \ref{rm111} for the reason why we only consider the case $\nu\geq |\delta|$.}  In particular when $\delta=0$ the functions $j_{\nu}$ and $y_{\nu}$ coincide with the Bessel functions hence $\mathfrak{g}_{\nu}=\mathfrak{h}_{\nu,0}$; when $\delta=d/2-1$ and $\nu=n+d/2-1$ with integer $n\geq 0$ and dimension $d\geq 3$, the functions $j_{\nu}$ and $y_{\nu}$ are ultraspherical Bessel functions of the first and second kind that we will deal with in the eigenvalue counting problems.

The motivation for studying the cross-product $\mathfrak{h}_{\nu,\delta}$ (resp., $\mathfrak{f}_{\nu}$) lies in the Neumann (resp., Dirichlet) Laplacian on spherical shells. In this section, we primarily focus on the study of $\mathfrak{h}_{\nu,\delta}$  (including $\mathfrak{g}_{\nu}$), as $\mathfrak{f}_{n}$ with integer $n$ has already been investigated in \cite{GMWW:2019}, and the generalization of those results from $\mathfrak{f}_{n}$ to $\mathfrak{f}_{\nu}$ is essentially the same in nature. For completeness, we still list below results for $\mathfrak{f}_{\nu}$, though without proofs.

One main goal of this section is to find approximations of positive zeros of $\mathfrak{f}_{\nu}(x)$ and $\mathfrak{h}_{\nu,\delta}(x)$ with uniform error terms, which are vital in our study of the two-term Weyl's law. To achieve this goal, we will put much effort into establishing asymptotics of the aforementioned cross-products. The desired approximations will be presented in Theorem \ref{approximation}. We will also provide approximations with nonuniform error terms, upper and lower bounds and so on.

The study of the Dirichlet/Neumann Laplacian on balls is relatively easier. One needs to investigate the zeros of the derivative of the first-kind ultraspherical Bessel function. Analogous results will be presented in Section  \ref{subsec4.2}.

Throughout this paper we denote
\begin{equation*}
g(x)=\left(\sqrt{1-x^2}-x\arccos x\right)/\pi,    
\end{equation*}
\begin{equation*}
G(x)=\left\{
\begin{aligned}
&Rg(x/R)-rg(x/r)\;\; &\mathrm{for}&\;0\leq x\leq r,\\
&Rg(x/R)\;\; &\mathrm{for}&\;r\leq x\leq R,
\end{aligned}
\right.
\end{equation*}
and
\begin{equation*}
\mathcal{G}_\nu(x)=x G(\nu/x).
\end{equation*}
These functions naturally arise in the asymptotics of Bessel functions and their cross-products, respectively. See Lemma \ref{app-1} and the following lemmas.


\subsection{Asymptotics of cross-products}

We first study $\mathfrak{f}_{\nu}(x)$ and $\mathfrak{g}_{\nu}(x)$, then $\mathfrak{h}_{\nu,\delta}(x)$ (based on results of $\mathfrak{g}_{\nu}(x)$).

\begin{lemma}\label{case111}
For any $c>0$ and all $\nu\ge0$, if $rx\geq \max\{(1+c)\nu, 10\}$ then
\begin{equation}\label{case111-1}
\mathfrak{f}_{\nu}(x)=-\frac{2}{\pi}\frac{\sin\left( \pi \mathcal{G}_\nu(x)\right)+O_c\left(x^{-1}\right)}{\left(\left(Rx\right)^2-\nu^2\right)^{1/4}
\left(\left(rx\right)^2-\nu^2\right)^{1/4}}
\end{equation}
and
\begin{equation}\label{case111-1NC}
\mathfrak{g}_{\nu}(x)=-\frac{2}{\pi Rr}\frac{\sin\left( \pi \mathcal{G}_\nu(x)\right)+O_c\left(x^{-1}\right)}{x^2\left(\left(Rx\right)^2-\nu^2\right)^{-1/4}
\left(\left(rx\right)^2-\nu^2\right)^{-1/4}}.
\end{equation}
\end{lemma}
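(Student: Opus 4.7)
The plan is to invoke the standard Hankel-type asymptotic expansions for $J_\nu(t), Y_\nu(t), J_\nu'(t), Y_\nu'(t)$ that are valid uniformly in the regime $t\geq (1+c)\nu$ with $t$ large — precisely Lemma \ref{app-1} referenced just above this statement, or equivalent formulas from the Bessel literature. In this regime each of the four functions admits a representation of the form
\begin{equation*}
J_\nu(t) = \sqrt{\tfrac{2}{\pi}}\,(t^2-\nu^2)^{-1/4}\bigl[\cos \phi_\nu(t)+O_c(t^{-1})\bigr],\qquad Y_\nu(t) = \sqrt{\tfrac{2}{\pi}}\,(t^2-\nu^2)^{-1/4}\bigl[\sin \phi_\nu(t)+O_c(t^{-1})\bigr],
\end{equation*}
with the companion derivative expansions having $-\sin\phi_\nu(t)$ and $\cos\phi_\nu(t)$ respectively, an additional factor $(t^2-\nu^2)^{1/2}/t$ and the same $O_c(t^{-1})$ relative error, where $\phi_\nu(t)=\sqrt{t^2-\nu^2}-\nu\arccos(\nu/t)-\pi/4$. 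Since $R>r$, the hypothesis $rx\geq\max\{(1+c)\nu,10\}$ automatically gives $Rx\geq(1+c)\nu$, so the expansions are simultaneously available at $t=rx$ and $t=Rx$.

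First I would substitute these asymptotics into \eqref{eigenequation} and apply the identity $\cos A\sin B-\cos B\sin A=-\sin(A-B)$ with $A=\phi_\nu(Rx)$ and $B=\phi_\nu(rx)$. This produces the prefactor $-\tfrac{2}{\pi}\bigl[((Rx)^2-\nu^2)((rx)^2-\nu^2)\bigr]^{-1/4}$ in front of $\sin\bigl(\phi_\nu(Rx)-\phi_\nu(rx)\bigr)$, plus error terms. To check the phase, a direct computation using the definition of $g$ gives
\begin{equation*}
\pi x R\, g\!\bigl(\tfrac{\nu}{Rx}\bigr)=\sqrt{(Rx)^2-\nu^2}-\nu\arccos\!\bigl(\tfrac{\nu}{Rx}\bigr),
\end{equation*}
and the analogue with $r$ in place of $R$; subtracting them and recalling that $\nu/x\leq r/(1+c)<r$ (so we are in the regime where $G(\nu/x)=Rg(\nu/(Rx))-rg(\nu/(rx))$) yields precisely $\phi_\nu(Rx)-\phi_\nu(rx)=\pi\mathcal{G}_\nu(x)$. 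This gives the main term of \eqref{case111-1}.

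For \eqref{case111-1NC} the procedure is identical, except that the derivative asymptotics contribute the extra factor $\bigl((Rx)^2-\nu^2\bigr)^{1/2}((rx)^2-\nu^2)^{1/2}/(Rrx^2)$, which combined with the $-1/4$ powers from the non-derivative expansion becomes the $+1/4$ exponents and the $1/(Rrx^2)$ prefactor shown in the statement. The same product-to-sum identity produces $-\sin\bigl(\phi_\nu(Rx)-\phi_\nu(rx)\bigr)$ and hence the same $\sin(\pi\mathcal{G}_\nu(x))$. Error bookkeeping is routine: writing each expansion as (main $+$ $O_c(t^{-1})$) and expanding the products, the cross terms contribute $O_c\bigl(((Rx)^2-\nu^2)^{-1/4}((rx)^2-\nu^2)^{-1/4}\cdot x^{-1}\bigr)$ (and its derivative analogue), which is exactly what appears as the $O_c(x^{-1})$ inside the square brackets of the numerator, using that $rx\geq 10$ and $rx\geq(1+c)\nu$ force $(rx)^2-\nu^2 \asymp_c (rx)^2$, so the error prefactor matches the main prefactor up to a constant.

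The main obstacle, in my view, is not the algebra but ensuring the asymptotic expansions used have \emph{uniform} error constants in $\nu$ throughout the range $rx\geq(1+c)\nu\geq 0$, including when $rx-(1+c)\nu$ is small (so $(rx)^2-\nu^2$ is only $\asymp_c (rx)^2$, not much larger). Standard textbook expansions such as McMahon's require $t$ to be much larger than $\nu$, so one must instead appeal to uniform expansions of Debye/Olver type (precisely what Lemma \ref{app-1} is expected to supply). Once that uniform form is in hand, absorbing the relative $O_c(t^{-1})$ error into the stated $O_c(x^{-1})$ inside the sine is routine, since $rx\asymp Rx \asymp x$ in this regime.
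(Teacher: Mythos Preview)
Your proposal is correct and follows essentially the same approach as the paper's proof: apply the uniform asymptotics of Lemma~\ref{app-1} to each Bessel factor at $t=Rx$ and $t=rx$, then use the sine angle-difference formula to collapse the products. Your identification $\phi_\nu(t)=\pi t g(\nu/t)-\pi/4$ and the computation $\phi_\nu(Rx)-\phi_\nu(rx)=\pi\mathcal{G}_\nu(x)$ (valid since $\nu/x<r$ here) are exactly what the paper uses, and your error bookkeeping and remarks on uniformity correctly anticipate that Lemma~\ref{app-1} supplies the needed $O_c(t^{-1})$ bounds.
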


\begin{proof}
We apply  the asymptotics of Bessel functions from Lemma \ref{app-1}  to all factors in $\mathfrak{f}_{\nu}(x)$ and $\mathfrak{g}_{\nu}(x)$, and subsequently utilize the angle difference formula for the sine function.
\end{proof}


\begin{lemma}\label{case222}
There exists a constant $c\in (0,1)$ such that for any $\varepsilon>0$ and all sufficiently large $\nu$, if $\nu+\nu^{1/3+\varepsilon}\leq rx\leq (1+c)\nu$ then
\begin{equation}
\mathfrak{f}_{\nu}(x)=-\frac{2}{\pi}\frac{\sin\left( \pi \mathcal{G}_\nu(x)\right)+O\left(z^{-3/2}\right)}{\left(\left(Rx\right)^2-\nu^2\right)^{1/4}
\left(\left(rx\right)^2-\nu^2\right)^{1/4}}           \label{case222-1}
\end{equation}
and
\begin{equation} \label{case222-1NC}
\mathfrak{g}_{\nu}(x)=-\frac{2}{\pi Rr}\frac{\sin\left( \pi \mathcal{G}_\nu(x)\right)+O\left(z^{-3/2}\right)}{x^2 \left(\left(Rx\right)^2-\nu^2\right)^{-1/4}
\left(\left(rx\right)^2-\nu^2\right)^{-1/4}},
\end{equation}
where $z$ is determined by the equation $rx=\nu+z \nu^{1/3}$.
\end{lemma}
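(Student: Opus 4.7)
The plan is to follow the proof of Lemma \ref{case111} in spirit: substitute asymptotic expansions for each of the four Bessel factors in $\mathfrak{f}_\nu$ and $\mathfrak{g}_\nu$, then combine via the angle subtraction formula for sine. The difficulty is that the Hankel-type expansion used for Lemma \ref{case111} (recorded in Lemma \ref{app-1}) deteriorates as the argument approaches the order, so in the transition range $\nu+\nu^{1/3+\varepsilon}\le rx\le(1+c)\nu$ one must instead invoke a uniform expansion that remains sharp all the way down to the turning point.

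First I would invoke (either from Olver's Liouville--Green/Airy uniform expansion, or from the recent results of \cite{FLPS:2024} cited earlier) asymptotics of the following form, valid uniformly in $\nu$ for $t=\nu+z_t\nu^{1/3}$ with $z_t\ge \nu^\varepsilon$ and $t\le (1+c)\nu$ (with $c$ sufficiently small):
\begin{equation*}
J_\nu(t)=\sqrt{\tfrac{2}{\pi}}\,(t^2-\nu^2)^{-1/4}\bigl[\cos\theta_\nu(t)+O(z_t^{-3/2})\bigr],
\end{equation*}
\begin{equation*}
Y_\nu(t)=\sqrt{\tfrac{2}{\pi}}\,(t^2-\nu^2)^{-1/4}\bigl[\sin\theta_\nu(t)+O(z_t^{-3/2})\bigr],
\end{equation*}
where $\theta_\nu(t):=\sqrt{t^2-\nu^2}-\nu\arccos(\nu/t)-\pi/4$, together with the analogous pair for $J_\nu'$ and $Y_\nu'$ in which the prefactor $(t^2-\nu^2)^{-1/4}$ is replaced by $t^{-1}(t^2-\nu^2)^{1/4}$ and $\cos,\sin$ are swapped (with a sign change). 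The exponent $-3/2$ is the natural size of the second term in the Debye series: the Airy asymptotic $\mathrm{Ai}(-\zeta)\sim \pi^{-1/2}\zeta^{-1/4}\cos\bigl(\tfrac{2}{3}\zeta^{3/2}-\tfrac{\pi}{4}\bigr)$ has its leading correction of size $\zeta^{-3/2}$, which becomes $z_t^{-3/2}$ once translated into our parametrization.

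Second, substituting these into $\mathfrak{f}_\nu(x)$ and $\mathfrak{g}_\nu(x)$ and applying $\sin(A-B)=\sin A\cos B-\cos A\sin B$ isolates the main term
\begin{equation*}
\tfrac{2}{\pi}\bigl((Rx)^2-\nu^2\bigr)^{-1/4}\bigl((rx)^2-\nu^2\bigr)^{-1/4}\sin\bigl(\theta_\nu(rx)-\theta_\nu(Rx)\bigr)
\end{equation*}
for $\mathfrak{f}_\nu$, and the analogous expression with derivative prefactors for $\mathfrak{g}_\nu$. Using $\partial_t\theta_\nu(t)=\sqrt{t^2-\nu^2}/t$ (essentially $-g'(\nu/t)$), a direct computation gives $\theta_\nu(Rx)-\theta_\nu(rx)=\pi\mathcal{G}_\nu(x)$, matching the phase in \eqref{case222-1} and \eqref{case222-1NC}. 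Since $r<R$ forces $z_{Rx}>z_{rx}=z$, all cross-term errors are dominated by $O(z^{-3/2})$, yielding the stated bounds.

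The main obstacle will be justifying the uniform expansion with the precise $z^{-3/2}$ error, uniformly down to $z_t\ge \nu^\varepsilon$, and in particular for the derivative expansions $J_\nu'$, $Y_\nu'$, where one must differentiate the Airy-based expansion and carefully track how both the leading term and the error behave under the $\mathrm{Ai}\to \mathrm{Ai}'$ transition. A secondary subtlety is choosing $c>0$ small enough that the Debye parametrization $t=\nu\sec\beta$ keeps $\beta$ bounded away from $\pi/2$, so that the implicit constants in the expansion remain uniformly bounded on the entire range $rx\le (1+c)\nu$ (and, since $R>r$, automatically for $t=Rx$ as well).
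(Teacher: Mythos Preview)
Your approach is correct and essentially matches the paper's. The only difference is that the paper observes that $R>r$ forces $Rx\ge(1+c')\nu$ for some fixed $c'>0$, so the $Rx$ factors lie in the range of the simpler Lemma~\ref{app-1} (error $O_c(x^{-1})\lesssim z^{-3/2}$), and only the $rx$ factors require the transition-zone expansion, supplied ready-made by Lemma~\ref{app-2} with error $O\bigl((rx\,g(\nu/(rx)))^{-1}\bigr)\asymp O(z^{-3/2})$; this sidesteps the ``main obstacle'' you flagged, since the needed uniform expansions (including for $J_\nu'$, $Y_\nu'$) are already recorded in the appendix.
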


\begin{proof}
Notice that $Rx>rx\geq \nu+\nu^{1/3+\varepsilon}$ implies that $Rx\geq (1+c')\nu$ with some constant $c'>0$. If $c$ is small then $\nu/rx$ is close to $1$,
\begin{equation*}
rxg\left(\frac{\nu}{rx}\right)\asymp rx\left( 1-\frac{\nu}{rx}\right)^{3/2}=\left(\frac{\nu}{rx} \right)^{1/2}z^{3/2} \asymp z^{3/2}\geq \nu^{\frac{3}{2}\varepsilon}
\end{equation*}
and
\begin{equation*}
x^{-1}\lesssim z^{-3/2}.
\end{equation*}

Applying Lemma \ref{app-1} to  $J_{\nu}(Rx)$, $Y_{\nu}(Rx)$, $J_{\nu}'(Rx)$ and $Y_{\nu}'(Rx)$,
Lemma \ref{app-2} to $J_{\nu}(rx)$, $Y_{\nu}(rx)$, $J_{\nu}'(rx)$ and $Y_{\nu}'(rx)$ and then the angle difference formula readily yields the desired asymptotics.
\end{proof}


\begin{lemma} \label{case2.5}
There exist strictly decreasing real-valued $C^1$ functions $\psi_i$: $\mathbb{R} \rightarrow (0, 1/4)$, $i=1,2$, such that $\psi_i(0)=1/12$, $\lim_{x\rightarrow -\infty}\psi_i(x)=1/4$, $\lim_{x\rightarrow \infty}\psi_i(x)=0$ and the images of $\psi'_i$ are bounded intervals. For any $\varepsilon>0$ and all sufficiently large $\nu$, if $\nu-\nu^{1/3+\varepsilon}\leq rx \leq \nu+\nu^{1/3+\varepsilon}$ then
\begin{equation}
\mathfrak{f}_\nu(x)=-\frac{2^{5/6}}{\pi^{1/2}}\frac{\sin\left(\pi \mathcal{G}_\nu(x)+\pi \psi_1\left(z\right)\right)+O\left(\nu^{-2/3+2.5\varepsilon}\right)}{\nu^{1/3}\left(\left(Rx\right)^2-\nu^2\right)^{1/4}
\left(\mathrm{Ai}^2+\mathrm{Bi}^2\right)^{-1/2}\left(-2^{1/3}z\right)}\label{case2.5-1}
\end{equation}
and
\begin{equation}\label{case2.5-1NC}
\mathfrak{g}_\nu(x)\!=-\frac{2^{7/6}}{\pi^{1/2}}\frac{\sin\left(\pi \mathcal{G}_\nu(x)-\pi \psi_2\left(z\right)\right)+O\left(\nu^{-2/3+2.75\varepsilon}\right)}{\nu^{2/3}Rx\left(\left(Rx\right)^2-\nu^2\right)^{-1/4}
\left(\mathrm{Ai'}^2+\mathrm{Bi'}^2\right)^{-1/2}\left(-2^{1/3}z\right)},
\end{equation}
where $z$ is determined by the equation $rx=\nu+z \nu^{1/3}$.
\end{lemma}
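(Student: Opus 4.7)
\emph{Proof plan.} The argument follows the same template as Lemmas~\ref{case111} and \ref{case222}: expand each of the four Bessel-function factors in the cross-products, then collapse the products via trigonometric identities. Since $R/r>1$ and $|rx-\nu|\leq\nu^{1/3+\varepsilon}$, one has $Rx\geq (1+c')\nu$ for a constant $c'>0$ depending only on $R/r$, so the McMahon-type asymptotics of Lemma~\ref{app-1} apply to $J_\nu(Rx)$, $Y_\nu(Rx)$, $J'_\nu(Rx)$, $Y'_\nu(Rx)$. The factors at $rx$ lie in the Airy transition regime, so the plan is to substitute uniform Langer--Olver expansions expressing these Bessel quantities in terms of $\mathrm{Ai}(-2^{1/3}z),\mathrm{Bi}(-2^{1/3}z)$ (and analogously $\mathrm{Ai}',\mathrm{Bi}'$ for the derivative versions) with a relative error of $O(\nu^{-2/3+2.5\varepsilon})$.

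The heart of the proof is the polar decomposition
\begin{equation*}
\mathrm{Ai}(-t)=M_1(t)\sin\Theta_1(t),\qquad \mathrm{Bi}(-t)=M_1(t)\cos\Theta_1(t),
\end{equation*}
with $M_1(t)=[\mathrm{Ai}^2(-t)+\mathrm{Bi}^2(-t)]^{1/2}$, and the analogous pair $(M_2,\Theta_2)$ for $(\mathrm{Ai}',\mathrm{Bi}')$ (a sign choice in $\Theta_2$ produces the ``$-\psi_2$'' in \eqref{case2.5-1NC}). Plugging the asymptotic expansions into $\mathfrak{f}_\nu(x)$ and applying the sine addition formula collapses the main term to
\begin{equation*}
-\frac{2^{5/6}}{\pi^{1/2}}\cdot\frac{M_1(2^{1/3}z)}{\nu^{1/3}((Rx)^2-\nu^2)^{1/4}}\sin\!\bigl(\omega_R+\tfrac{\pi}{4}-\Theta_1(2^{1/3}z)\bigr),
\end{equation*}
where $\omega_R:=\pi Rx\,g(\nu/Rx)$, matching the advertised amplitude; an identical calculation handles $\mathfrak{g}_\nu$. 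One then sets
\begin{equation*}
\pi\psi_1(z):=\tfrac{\pi}{4}-\Theta_1(2^{1/3}z)+\tfrac{2}{3}\bigl((2^{1/3}z)_+\bigr)^{3/2},
\end{equation*}
and checks the identity $\omega_R+\tfrac{\pi}{4}-\Theta_1(2^{1/3}z)=\pi\mathcal{G}_\nu(x)+\pi\psi_1(z)$ up to a Taylor remainder. Indeed, when $rx\leq\nu$ both $\omega_r$ and $(\cdot)_+^{3/2}$ vanish and $\pi\mathcal{G}_\nu(x)=\omega_R$, whereas when $rx>\nu$ one has $\pi\mathcal{G}_\nu(x)=\omega_R-\omega_r$ with $\omega_r=\tfrac{2}{3}(2^{1/3}z)^{3/2}+O(\nu^{-2/3+2.5\varepsilon})$ by a direct expansion of $rx(\sin\phi-\phi\cos\phi)$ with $\phi=\arccos(\nu/rx)$.

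The analytic properties of $\psi_i$ reduce to standard Airy facts. The Wronskian identity $W[\mathrm{Ai},\mathrm{Bi}]\equiv 1/\pi$ yields $\Theta_1'(t)=[\pi M_1^2(t)]^{-1}>0$, and inserting the expansion $\pi M_1^2(t)=t^{-1/2}\bigl(1-\tfrac{5}{72\zeta^2}+O(\zeta^{-4})\bigr)$ with $\zeta=\tfrac{2}{3}t^{3/2}$ gives $\Theta_1'(t)=t^{1/2}+\tfrac{5}{32}t^{-5/2}+\cdots$; this suffices to conclude that $\psi_1'$ is strictly negative and bounded on $\mathbb{R}$. The companion Wronskian $W[\mathrm{Ai}',\mathrm{Bi}']\equiv -t/\pi$ handles $\psi_2$. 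The value $\psi_i(0)=1/12$ follows from $\mathrm{Ai}(0)/\mathrm{Bi}(0)=3^{-1/2}$, which gives $\Theta_1(0)=\pi/6$ (and an analogous calculation applies to $\Theta_2(0)$). The limit $\psi_i\to 1/4$ at $-\infty$ comes from the exponential dominance of $\mathrm{Bi}$ over $\mathrm{Ai}$ (forcing $\Theta_i\to 0$), while $\psi_i\to 0$ at $+\infty$ follows from the large-$t$ expansion $\Theta_i(t)=\tfrac{2}{3}t^{3/2}+\pi/4+O(t^{-3/2})$, which exactly cancels both the $\pi/4$ and the growing $(\cdot)_+^{3/2}$ piece.

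The main obstacle I anticipate is obtaining the uniform Langer--Olver expansions with the advertised $O(\nu^{-2/3+2.5\varepsilon})$ (resp.\ $2.75\varepsilon$) relative error throughout the range $|z|\leq\nu^\varepsilon$, over which the Airy functions themselves vary from exponentially small to moderately oscillatory without losing uniformity. A secondary subtlety is confirming strict monotonicity and boundedness of $\psi_i'$ at moderate $z$, where one cannot invoke the large-$t$ Airy asymptotics and must instead work directly with the Wronskian formula $\Theta_i'=[\pi M_i^2]^{-1}$ and carefully balance it against the $(\cdot)_+^{3/2}$ contribution.
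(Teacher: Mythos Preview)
Your plan is essentially the paper's proof: apply Lemma~\ref{app-1} at $Rx$ and the Airy-regime expansion (Lemma~\ref{9.3.4analogue}) at $rx$, polar-decompose $(\mathrm{Ai},\mathrm{Bi})$ or $(\mathrm{Ai}',\mathrm{Bi}')$, collapse via the angle-sum formula, and define $\psi_i$ by subtracting the $\tfrac{2\sqrt{2}}{3\pi}z_+^{3/2}$ correction so that $\omega_R$ is replaced by $\pi\mathcal{G}_\nu(x)$. The paper writes the phase as a continuous branch of $\arctan(\mathrm{Bi}'/\mathrm{Ai}')(-x)$ rather than your $\Theta_2$, but these are the same object.

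Both obstacles you flag are resolved in the paper, not bypassed. For the uniform transition-region expansion, the paper does not quote a black-box Langer--Olver result but proves the needed statement directly (Lemma~\ref{9.3.4analogue}) from Olver's uniform expansions \eqref{jnuse111}--\eqref{ynuse111NC}, using the Taylor estimate \eqref{app-3} for $\nu^{2/3}\zeta$ near $-2^{1/3}w$ and tracking the Airy mean-value error separately for $w\ge 0$ and $w<0$; this is where the exponents $2.5\varepsilon$ and $2.75\varepsilon$ come from. For the strict monotonicity of $\psi_2$ on $(0,\infty)$, your Wronskian observation reduces it (as you say) to the inequality $\pi t^{-1/2}(\mathrm{Ai}'^2+\mathrm{Bi}'^2)(-t)>1$ for all $t>0$; the paper proves this not by asymptotics but by the identity
\[
t^{-1/2}\bigl(\mathrm{Ai}'^2+\mathrm{Bi}'^2\bigr)(-t)=\tfrac{1}{2}\,\xi\bigl(J_{2/3}^2+Y_{2/3}^2\bigr)(\xi),\qquad \xi=\tfrac{2}{3}t^{3/2},
\]
together with the classical fact (Olver, \S7.3) that $\xi(J_\nu^2+Y_\nu^2)(\xi)$ is strictly decreasing to $2/\pi$. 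The same device with $J_{1/3},Y_{1/3}$ handles $\psi_1$.
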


\begin{proof}
We only prove the $\mathfrak g_{\nu}$ part; for the $\mathfrak{f}_{\nu}$ part see \cite[Lemma 2.3]{GMWW:2019}.

Notice that $Rx>rx\geq \nu-\nu^{1/3+\varepsilon}$ implies $Rx>(1+c')\nu$ for some constant $c'>0$ whenever $\nu$ is sufficiently large. Denote
\begin{equation*}
rx=\nu+z\nu^{1/3} \quad  \textrm{with $-\nu^\varepsilon\leq z\leq \nu^\varepsilon$}.
\end{equation*}
Applying Lemma \ref{app-1} to $J'_\nu(Rx)$ and $Y'_\nu(Rx)$ and
Lemma \ref{9.3.4analogue} to $J'_\nu(rx)$ and $Y'_\nu(rx)$ yields
\begin{align}
\mathfrak{g}_\nu(x)&=-\frac{2^{7/6}\left(\left(Rx\right)^2-\nu^2\right)^{1/4}\sqrt{\mathrm{Ai}'^2+\mathrm{Bi}'^2}(-2^{1/3}z)}
{\pi^{1/2}Rx\nu^{2/3}} \cdot \nonumber \\
       &\bigg[\sin\!\left(\! \pi Rx g\!\left(\frac{\nu}{Rx}\right)\!-\frac{3\pi}{4}\!\right)
       \! \frac{\mathrm{Ai}'}{\sqrt{\mathrm{Ai}'^2+\mathrm{Bi}'^2}}\left(-2^{1/3}z\right)+ \label{equ1NC}\\
       & \ \cos\!\left(\! \pi R x g\!\left(\frac{\nu}{Rx}\right)\!-\frac{3\pi}{4}\!\right)
      \! \frac{\mathrm{Bi}'}{\sqrt{\mathrm{Ai}'^2+\mathrm{Bi}'^2}}\left(-2^{1/3}z\right)\!+O\left(\nu^{-2/3+2.75\varepsilon}\right)\!\bigg], \label{equ2NC}
\end{align}
where we have used facts that $\mathrm{Ai}'^2(x)+\mathrm{Bi}'^2(x)$ has an absolute positive lower bound (see \cite[10.4.10 and 10.4.80]{abram:1972}) and $x\asymp \nu$.

Let $t_0=-\infty$ and $t_m$ ($m\in\mathbb{N}$) be the $m$th zero of the function $\textrm{Ai}'(-x)$. Let
\begin{equation*}
\mathcal{A}(x)=\left\{
\begin{array}{ll}
-(m-2)\pi+\arctan\left( \frac{\mathrm{Bi'}}{\mathrm{Ai'}}(-x)\right),  & \textrm{$x\in (t_{m-1}, t_m)$, $m\in\mathbb{N}$,}\\
-(m-2)\pi-\frac{1}{2}\pi,  & \textrm{$x=t_m$, $m\in\mathbb{N}$,}
\end{array}\right.
\end{equation*}
be a continuous branch of the inverse tangent function $\arctan\left(\frac{\mathrm{Bi'}}{\mathrm{Ai'}}(-x)\right)$ and
\begin{equation*}
\beta(z)=\frac{1}{\pi} \mathcal{A}\left(2^{1/3}z\right).
\end{equation*}
We can then use this function $\beta$ and the angle sum formula to rewrite \eqref{equ1NC} and \eqref{equ2NC} as
\begin{equation}
\left[\sin\left( \pi R x g\left(\frac{\nu}{Rx}\right)+\pi \beta(z)-\frac{3}{4}\pi\right)+O\left(\nu^{-2/3+2.75\varepsilon}\right)\right].\label{case2.5-3NC}
\end{equation}
Set
\begin{equation*}
\psi_2(z)=\left\{ \begin{array}{ll}
-\beta(z)-\frac{2\sqrt{2}}{3\pi}z^{3/2}+\frac{3}{4},  & \textrm{$z\geq 0$},\\
-\beta(z)+\frac{3}{4},        & \textrm{$z\leq 0$}.
\end{array}\right.
\end{equation*}
By rewriting \eqref{case2.5-3NC} with this $\psi_2$ and the function $\mathcal{G}_\nu$ and using the asymptotics
\begin{equation*}
rx g\left(\frac{\nu}{rx}\right)=\frac{2\sqrt{2}}{3\pi}z^{3/2}+O\left(z^{2.5}\nu^{-2/3}\right) \quad \textrm{for $z\geq 0$},
\end{equation*}
we get \eqref{case2.5-1NC}.

It remains to check the properties of $\psi_2$. We first show that $\psi_2'(z)\leq 0$ with the equality holding only at $z=0$. Indeed, by using properties of Airy functions (10.4.1 and 10.4.10 in \cite[P. 446]{abram:1972}) we have
\begin{equation*}
\mathcal{A}'(x)=-\frac{1}{\pi}\frac{x}{(\mathrm{Ai}'^2+\mathrm{Bi}'^2 )(-x)}.
\end{equation*}
Hence $\psi_2'(z)\leq 0$ if $z\leq 0$ while the equality holds whenever $z=0$.
For $z>0$, $\psi_2'(z)<0$ is equivalent to
\begin{equation*}
\pi z^{-1/2} \left(\mathrm{Ai}'^2+\mathrm{Bi}'^2 \right)(-z)>1 \quad \textrm{for all $z>0$}
\end{equation*}
which follows from the following two facts. Firstly,
\begin{equation*}
\lim_{z\rightarrow+\infty} \pi z^{-1/2} \left(\mathrm{Ai}'^2+\mathrm{Bi}'^2 \right)(-z)=1,
 \end{equation*}
which is an easy consequence of the asymptotics of $\mathrm{Ai}'$ and $\mathrm{Bi}'$ (see \cite[P. 449]{abram:1972}). Secondly, for $z>0$
\begin{equation*}
 z^{-1/2}\left(\mathrm{Ai}'^2+\mathrm{Bi}'^2\right)(-z)=\frac{1}{2}\xi\left(J_{2/3}^2+Y_{2/3}^2\right)(\xi), \quad \textrm{with $\xi=\frac{2}{3}z^{3/2}$},
\end{equation*}
is a decreasing function of $z$ (see  \S 7.3 in \cite[P. 342]{olver:1997}). Note that the above identity follows from 9.1.3 in \cite[P. 358]{abram:1972} and 10.4.28 in \cite[P. 447]{abram:1972}.

The continuity of $\psi_2'$ is obvious. The limit of $\psi_2$ at $\infty$ is easy to get by using asymptotics \cite[10.4.81]{abram:1972} of $\mathcal{A}(x)$,  while the limit at $-\infty$ can be obtained by straightforward computation. Since $\psi_2'(z)\rightarrow 0$ as $|z|\rightarrow \infty$, its image must be a bounded interval.
\end{proof}


\begin{lemma}\label{case3}
For any $\varepsilon>0$ and all sufficiently large $\nu$, if $r\nu/R<rx\leq \nu-\nu^{1/3+\varepsilon}$ then
\begin{equation}\label{case3-1}
\mathfrak f_\nu(x)=Y_\nu(rx)\frac{\mathrm{Ai}\!\left(-\left(\frac{3\pi}{2} \mathcal{G}_\nu(x)\right)^{2/3}\right)+O\!\left(\nu^{-4/3}\max\!\left\{1, \mathcal{G}_\nu(x)^{1/6}\right\}\!\right)}
{\left(\left(Rx\right)^2-\nu^2\right)^{1/4}\left(12\pi \mathcal{G}_\nu(x)\right)^{-1/6}}
\end{equation}
and
\begin{equation}\label{case3-1NC}
\mathfrak g_\nu(x)=-2Y'_\nu(rx)\frac{\mathrm{Ai'}\!\left(-\left(\frac{3\pi}{2} \mathcal{G}_\nu(x)\right)^{2/3}\right)
\!+\!O\!\left(\nu^{-2/3}\min\!\left\{1, \mathcal{G}_\nu(x)^{-1/6}\right\}\!\right)}
{Rx\left(\left(Rx\right)^2-\nu^2\right)^{-1/4}\left(12\pi \mathcal{G}_\nu(x)\right)^{1/6}},
\end{equation}
where $Y_\nu(rx)<0$ and  $Y'_\nu(rx)>0$. If we further assume that $\mathcal{G}_\nu(x)>1$, then
\begin{equation}
\mathfrak f_\nu(x)=\sqrt{\frac{2}{\pi}} Y_\nu(rx)\frac{\sin\left( \pi \mathcal{G}_\nu(x)+\frac{\pi}{4}\right)+O\left(\mathcal{G}_\nu(x)^{-1} \right)}{\left(\left(Rx\right)^2-\nu^2\right)^{1/4}}    \label{case3-2}
\end{equation}
and
\begin{equation}
\mathfrak g_\nu(x)=-\sqrt{\frac{2}{\pi}} Y'_\nu(rx)\frac{\sin\left( \pi \mathcal{G}_\nu(x)-\frac{\pi}{4}\right)+O\left(\mathcal{G}_\nu(x)^{-1} \right)}{Rx\left(\left(Rx\right)^2-\nu^2\right)^{-1/4}}.    \label{case3-2NC}
\end{equation}
\end{lemma}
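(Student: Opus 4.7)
My plan exploits the fact that the range $r\nu/R<rx\leq\nu-\nu^{1/3+\varepsilon}$ places $rx$ strictly below the turning point $\nu$ by at least $\nu^{1/3+\varepsilon}$, whereas $Rx>\nu$ may lie anywhere from the transition zone near $\nu$ out to the fully oscillatory regime. Consequently, the below-turning-point asymptotics (the counterparts of Lemmas \ref{app-1}--\ref{app-2} on the other side of the transition) give $|J_\nu(rx)/Y_\nu(rx)|$ exponentially small in $\nu^{3\varepsilon/2}$, whereas for $J_\nu(Rx)$ and $J'_\nu(Rx)$ I will need a Langer-type uniform Airy asymptotic valid all the way down to $Rx=\nu^+$. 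It is this uniform expansion that converts the cross-product into the Airy function formulas \eqref{case3-1} and \eqref{case3-1NC}.

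First I would factor $\mathfrak f_\nu(x)=Y_\nu(rx)\bigl[J_\nu(Rx)-(J_\nu/Y_\nu)(rx)\,Y_\nu(Rx)\bigr]$. Because $|J_\nu(rx)/Y_\nu(rx)|$ is exponentially small and $|Y_\nu(Rx)|$ shares the same order as $|J_\nu(Rx)|$ (controlled by the Langer prefactor $\sim\nu^{-1/3}$ in the transition zone and by $\sim((Rx)^2-\nu^2)^{-1/4}$ further out), the cross term $J_\nu(rx)Y_\nu(Rx)$ is absorbed into the claimed error. Substituting the uniform Langer asymptotic
\begin{equation*}
J_\nu(Rx)=\frac{(12\pi\mathcal G_\nu(x))^{1/6}}{((Rx)^2-\nu^2)^{1/4}}\,\mathrm{Ai}\!\left(-\Bigl(\tfrac{3\pi}{2}\mathcal G_\nu(x)\Bigr)^{2/3}\right)+\text{error}
\end{equation*}
then yields \eqref{case3-1}. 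The crucial identifications are $(2/3)(-\zeta)^{3/2}=(1/\nu)\bigl(\sqrt{(Rx)^2-\nu^2}-\nu\arccos(\nu/(Rx))\bigr)=(\pi/\nu)\mathcal G_\nu(x)$, so $-\nu^{2/3}\zeta=(\tfrac{3\pi}{2}\mathcal G_\nu)^{2/3}$, together with a short algebraic simplification of the Langer prefactor. The sign $Y_\nu(rx)<0$ follows because $Y_\nu$ is strictly increasing from $-\infty$ on $(0,\nu)$ and remains negative up to the turning point. The derivative version \eqref{case3-1NC} is proved identically after factoring out $Y'_\nu(rx)$ and inserting the analogous Langer expansion for $J'_\nu(Rx)$, whose prefactor places $(12\pi\mathcal G_\nu)^{1/6}$ in the denominator together with $((Rx)^2-\nu^2)^{-1/4}$; the sign $Y'_\nu(rx)>0$ follows from the same monotonicity.

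Once \eqref{case3-1} and \eqref{case3-1NC} are established, the hypothesis $\mathcal G_\nu(x)>1$ makes the Airy argument $-(\tfrac{3\pi}{2}\mathcal G_\nu)^{2/3}$ large and negative, so I would apply the classical asymptotics
\begin{equation*}
\mathrm{Ai}(-y)=\pi^{-1/2}y^{-1/4}\sin\bigl(\tfrac{2}{3}y^{3/2}+\tfrac{\pi}{4}\bigr)+O(y^{-7/4}),\quad\mathrm{Ai}'(-y)=-\pi^{-1/2}y^{1/4}\cos\bigl(\tfrac{2}{3}y^{3/2}+\tfrac{\pi}{4}\bigr)+O(y^{-5/4})
\end{equation*}
with $y=(\tfrac{3\pi}{2}\mathcal G_\nu)^{2/3}$, so that $\tfrac{2}{3}y^{3/2}=\pi\mathcal G_\nu$. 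The Langer prefactors $(12\pi\mathcal G_\nu)^{\pm1/6}$ combine with $(\tfrac{3\pi}{2}\mathcal G_\nu)^{\mp1/6}$ via the identity $(12\pi/(3\pi/2))^{1/6}=8^{1/6}=\sqrt2$ to yield the clean constant $\sqrt{2/\pi}$; the trigonometric identity $\cos(\pi\mathcal G_\nu+\tfrac{\pi}{4})=-\sin(\pi\mathcal G_\nu-\tfrac{\pi}{4})$ then converts the derivative case into the form \eqref{case3-2NC}. The error $O(\mathcal G_\nu^{-1})$ comes from the $O(y^{-7/4})=O(\mathcal G_\nu^{-7/6})$ tail of the Airy expansion, which dominates the exponentially small cross-term contribution.

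The main obstacle is producing the uniform Langer asymptotic for $J_\nu(Rx)$ and $J'_\nu(Rx)$ with the precise remainders $O(\nu^{-4/3}\max\{1,\mathcal G_\nu^{1/6}\})$ and $O(\nu^{-2/3}\min\{1,\mathcal G_\nu^{-1/6}\})$ that hold uniformly as $Rx\to\nu^+$; this demands a careful bookkeeping of the Langer remainder as the argument crosses the turning point (or an appeal to a companion lemma among those already used for the other subranges in this section).
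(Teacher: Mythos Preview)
Your approach is essentially identical to the paper's: factor out $Y_\nu(rx)$ (resp.\ $Y'_\nu(rx)$), use the below-turning-point asymptotics to show the ratio $J_\nu(rx)/Y_\nu(rx)$ (resp.\ $J'_\nu(rx)/Y'_\nu(rx)$) is $O(e^{-\nu^\varepsilon})$, then apply Olver's uniform Airy expansion to $J_\nu(Rx)$ (resp.\ $J'_\nu(Rx)$) together with the identity $\nu^{2/3}(-\zeta_R)=(\tfrac{3\pi}{2}\mathcal G_\nu(x))^{2/3}$, and finally feed in the large-argument Airy asymptotics when $\mathcal G_\nu(x)>1$. The paper resolves what you call the ``main obstacle'' by a simple dichotomy on whether $\nu^{2/3}(-\zeta_R)>1$ or not, bounding the Airy remainder in each case; this is exactly the careful bookkeeping you anticipate.
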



\begin{remark}\label{333}
It is trivial to follow from the asymptotics \eqref{case3-1NC} and \eqref{case3-2NC} to get that if  $\mathcal{G}_\nu(x)\leq 1$ then
\begin{equation}\label{case3-1hnuNC}
	\mathfrak g_\nu(x)=-2Y'_\nu(rx)\frac{\mathrm{Ai'}\!\left(-\left(\frac{3\pi}{2} \mathcal{G}_\nu(x)\right)^{2/3}\right)
		\!+\!O\!\left(\nu^{-2/3}\right)}
	{Rx\left(\left(Rx\right)^2-\nu^2\right)^{-1/4}\left(12\pi \mathcal{G}_\nu(x)\right)^{1/6}},
\end{equation}
and, if $\mathcal{G}_\nu(x)>1$ then
\begin{equation}
\mathfrak g_\nu(x)=-\sqrt{\frac{2}{\pi}} Y'_\nu(rx)\frac{\sin\left( \pi \mathcal{G}_\nu(x)-\frac{\pi}{4}\right)+O\left(\max\left\{\mathcal{G}_\nu(x)^{-1}, \nu^{-\frac{2}{3}-\frac{\varepsilon}{2}}\right\} \right)}{Rx\left(\left(Rx\right)^2-\nu^2\right)^{-1/4}}    \label{111}
\end{equation}
Below we will use \eqref{case3-1hnuNC} and \eqref{111} instead of \eqref{case3-1NC} and \eqref{case3-2NC}. In particular the error term of \eqref{111} is trivially weaker than that of \eqref{case3-2NC} but good enough for later application. The reason for this treatment is that we will reduce the study of $\mathfrak{h}_{\nu,\delta}(x)$ to that of $\mathfrak g_\nu(x)$, but the generalization of \eqref{case3-2NC} from $\mathfrak g_\nu(x)$ to $\mathfrak{h}_{\nu,\delta}(x)$ is not valid while that of  \eqref{111} is. We will illustrate this point later.
\end{remark}

\begin{proof}[Proof of Lemma \ref{case3}]
We focus on $\mathfrak{g}_{\nu}$ below; for $\mathfrak{f}_{\nu}$ see \cite[Lemma 2.6]{GMWW:2019}. Notice that
\begin{equation*}
\mathfrak{g}_\nu(x)=Y'_\nu(rx)\left(J'_\nu(Rx)-\frac{J'_\nu(rx)}{Y'_\nu(rx)}Y'_\nu(Rx)\right).
\end{equation*}
We will find the asymptotics of $J'_\nu(Rx)$ and show  $(J'_\nu(rx)/Y'_\nu(rx))Y'_\nu(Rx)$ is relatively small.

For the convenience of using Olver's asymptotics \eqref{jnuse111NC} and \eqref{ynuse111NC}, we denote
\begin{equation*}
Rx=\nu z_{R} \quad \textrm{and}\quad  rx=\nu z_{r}.
\end{equation*}
We have two useful estimates. First, since $1<z_R< R/r$, the number $\zeta_R:=\zeta(z_R)$, determined by \eqref{def-zeta1}, is negative such that
\begin{equation*}
0<(-\zeta_R)^{3/2}\lesssim 1.
\end{equation*}
Second, it follows from $r/R< z_r\leq 1-\nu^{-2/3+\varepsilon}$ and \eqref{bound-zeta-} that the number $\zeta_r:=\zeta(z_r)$, determined by \eqref{def-zeta2}, is positive such that
\begin{equation*}
\nu^{-1+1.5\varepsilon}\lesssim \zeta_r^{3/2}\lesssim 1
\end{equation*}
whenever $\nu$ is sufficiently large.

With
\begin{equation*}
\nu^{\varepsilon}\lesssim \nu^{2/3}\zeta_r\lesssim \nu^{2/3},
\end{equation*}
applying \eqref{jnuse111NC}, \eqref{ynuse111NC} and asymptotics of Airy functions with positive arguments (see \cite[P. 448--449]{abram:1972})  yields
\begin{equation*}
J'_\nu(rx)=\left(2\pi\right)^{-1/2}(rx)^{-1}\left(\nu^2-(rx)^2\right)^{1/4}e^{-\frac{2}{3}\nu\zeta_r^{3/2}} \left(1+O\left(\nu^{-1}\zeta_r^{-3/2}\right)\right)
\end{equation*}
and
\begin{equation*}
Y'_\nu(rx)=\left(2/\pi\right)^{1/2}(rx)^{-1}\left(\nu^2-(rx)^2\right)^{1/4}e^{\frac{2}{3}\nu\zeta_r^{3/2}} \left(1+O\left(\nu^{-1}\zeta_r^{-3/2}\right)\right).
\end{equation*}
Thus  $Y'_\nu(rx)>0$ and
\begin{equation*}
\frac{J'_\nu(rx)}{Y'_\nu(rx)}=\frac{1}{2}e^{-\frac{4}{3}\nu\zeta_r^{3/2}}\left(1+O\left(\nu^{-1}\zeta_r^{-3/2}\right)\right)
=O\left(e^{-\nu^{\varepsilon}}\right).
\end{equation*}
Therefore
\begin{equation*}
\mathfrak{g}_\nu(x)=Y'_\nu(rx)\left(J'_\nu(\nu z_{R})+Y'_\nu(\nu z_{R})O\left(e^{-\nu^{\varepsilon}}\right)\right).
\end{equation*}

We next apply \eqref{jnuse111NC} and \eqref{ynuse111NC} to $J'_\nu(\nu z_{R})$ and $Y'_\nu(\nu z_{R})$ respectively. By using a simple identity
\begin{equation}\label{222}
\nu^{2/3}\left(-\zeta_R\right)=\left(\frac{3\pi}{2} \mathcal{G}_\nu(x)\right)^{2/3},
\end{equation}
we readily obtain the main term in \eqref{case3-1NC}. Since we only have
\begin{equation*}
0<\nu^{2/3}(-\zeta_R)\lesssim \nu^{2/3},
\end{equation*}
the quantity $\nu^{2/3}(-\zeta_R)$ is not necessarily large. To obtain the error term in \eqref{case3-1NC}, we discuss depending on whether $\nu^{2/3}(-\zeta_R)>1$ or not. If it is greater than $1$, using bounds of Airy functions with negative arguments (see \cite[P. 448--449]{abram:1972}) yields a bound $O(\nu^{-2/3}(\nu^{2/3}|\zeta_R|)^{-1/4})$. Otherwise, using trivial bounds of Airy functions yields a bound $O(\nu^{-2/3})$. With these two bounds we obtain \eqref{case3-1NC}.

Applying the asymptotics of $\mathrm{Ai}'(-r)$ to \eqref{case3-1NC}  yields \eqref{case3-2NC}.
\end{proof}


We now study $\mathfrak{h}_{\nu,\delta}(x)$. Expanding derivatives in $\mathfrak{h}_{\nu,\delta}(x)$ and factoring out $x^{-2\delta}$ yields
\begin{equation}
\mathfrak{h}_{\nu,\delta}(x)=(Rr)^{-\delta}x^{-2\delta}\widetilde{\mathfrak{h}}_{\nu,\delta}(x),                       \label{444}
\end{equation}
where
\begin{equation}
\widetilde{\mathfrak{h}}_{\nu,\delta}(x):=\mathfrak{g}_{\nu}(x)+\mathscr{E}_{\nu,\delta}(x) \label{555}
\end{equation}
with
\begin{align*}
\mathscr{E}_{\nu,\delta}(x)=&\delta^2 (Rr)^{-1}x^{-2}\mathfrak{f}_{\nu}(x)-\delta R^{-1}x^{-1}\left(J_{\nu}(Rx)Y_{\nu}'(rx)-J_{\nu}'(rx)Y_{\nu}(Rx) \right)\\
&-\delta r^{-1}x^{-1}\left(J_{\nu}'(Rx)Y_{\nu}(rx)- J_{\nu}(rx)Y_{\nu}'(Rx)\right).
\end{align*}
It is obvious that positive zeros of $\mathfrak{h}_{\nu,\delta}(x)$ are exactly positive zeros of $\widetilde{\mathfrak{h}}_{\nu,\delta}(x)$.

One can check that the remainder $\mathscr{E}_{\nu,\delta}(x)$ can be absorbed into error terms of asymptotics \eqref{case111-1NC}, \eqref{case222-1NC}, \eqref{case2.5-1NC}, \eqref{case3-1hnuNC} and \eqref{111} of $\mathfrak{g}_{\nu}(x)$. The computation is routine and tedious.
For instance, we provide partial computation related to the term $|J_{\nu}'(Rx)Y_{\nu}(rx)|x^{-1}$. If $x\geq \max\{(1+c)\nu, 10\}$ then $J_{\nu}(x)$, $Y_{\nu}(x)$, $J_{\nu}'(x)$ and $Y_{\nu}'(x)$ are all of size $O(x^{-1/2})$. Under assumptions of Lemma \ref{case111} we have $|J_{\nu}'(Rx)Y_{\nu}(rx)|x^{-1}=O(x^{-2})$, which can be absorbed in the error term $O(x^{-1})$ of the asymptotics \eqref{case111-1NC} of $\mathfrak{g}_{\nu}(x)$.  Under assumptions of Lemma \ref{case3} together with $\mathcal{G}_\nu(x)\leq 1$, we can use \eqref{bound-zeta+} and \eqref{222} to obtain that
\begin{equation*}
	\mathcal{G}_\nu(x)\asymp \nu^{-1/2}(Rx-\nu)^{3/2},
\end{equation*}
$0<Rx-\nu\lesssim \nu^{1/3}$ and $0<rx-r\nu/R\lesssim \nu^{1/3}$. With these estimates it is easy to show that $|J_{\nu}'(Rx)Y_{\nu}(rx)|x^{-1}$ can be absorbed in the error term of \eqref{case3-1hnuNC}. Under assumptions of Lemma \ref{case3} but with $\mathcal{G}_\nu(x)>1$, we have
\begin{equation*}
\left(\left(Rx\right)^2-\nu^2\right)^{-1/4}\frac{|J_{\nu}'(Rx)Y_{\nu}(rx)|}{|Y'_\nu(rx)|}\lesssim \left(\nu^2-(rx)^2\right)^{-1/2}.
\end{equation*}
This may not be majorized by the error $O(\mathcal{G}_\nu(x)^{-1})$ in  \eqref{case3-2NC}. Hence we add its trivial bound $O(\nu^{-2/3-\varepsilon/2})$  into \eqref{case3-2NC} to rewrite it into the form \eqref{111}.

\begin{lemma} \label{777}
The asymptotics \eqref{case111-1NC}, \eqref{case222-1NC}, \eqref{case2.5-1NC}, \eqref{case3-1hnuNC} and \eqref{111} in Lemmas 2.1--2.4 still hold with $\mathfrak{g}_{\nu}(x)$ replaced by $\widetilde{\mathfrak{h}}_{\nu,\delta}(x)$.
\end{lemma}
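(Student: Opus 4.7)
My plan is to prove the lemma by showing that the perturbation $\mathscr{E}_{\nu,\delta}(x)$ defined via \eqref{555} is absorbable into the error terms appearing in each of \eqref{case111-1NC}, \eqref{case222-1NC}, \eqref{case2.5-1NC}, \eqref{case3-1hnuNC}, and \eqref{111}. Since $\widetilde{\mathfrak{h}}_{\nu,\delta}(x)=\mathfrak{g}_\nu(x)+\mathscr{E}_{\nu,\delta}(x)$, nothing needs to be said about the main terms; we simply need to majorize the three pieces constituting $\mathscr{E}_{\nu,\delta}$ in each of the four regimes and observe that they are no larger than the already present error.

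The work will proceed regime by regime, following the partition of Lemmas \ref{case111}--\ref{case3}. In each regime I would apply to $J_\nu(\rho)$, $Y_\nu(\rho)$, $J'_\nu(\rho)$, $Y'_\nu(\rho)$ (with $\rho=rx$ and $\rho=Rx$) the appropriate asymptotic from Lemma \ref{app-1}, Lemma \ref{app-2}, Lemma \ref{9.3.4analogue}, or Olver's Airy-type formulas \eqref{jnuse111NC}--\eqref{ynuse111NC}, and then multiply the resulting bounds as dictated by the three summands of $\mathscr{E}_{\nu,\delta}$. The first summand $\delta^2(Rr)^{-1}x^{-2}\mathfrak{f}_\nu(x)$ is easy: using the already-established asymptotics of $\mathfrak{f}_\nu$ (namely \eqref{case111-1}, \eqref{case222-1}, \eqref{case2.5-1}, \eqref{case3-1} and \eqref{case3-2}), the factor $x^{-2}\asymp\nu^{-2}$ makes this piece beat the corresponding error for $\mathfrak{g}_\nu(x)$ in every regime by comparing the explicit $x,\nu,z,\mathcal{G}_\nu(x)$ dependences of numerators and denominators. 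The second and third summands, namely the ``mixed'' cross-products
\begin{equation*}
x^{-1}\bigl(J_\nu(Rx)Y'_\nu(rx)-J'_\nu(rx)Y_\nu(Rx)\bigr)\quad\text{and}\quad x^{-1}\bigl(J'_\nu(Rx)Y_\nu(rx)-J_\nu(rx)Y'_\nu(Rx)\bigr),
\end{equation*}
require a parallel but separate bookkeeping. In the oscillatory regimes (Lemmas \ref{case111} and \ref{case222}) all four factors have the expected ``$\rho^{-1/2}(\rho^2-\nu^2)^{\pm 1/4}$'' size, so the mixed products are controlled by a similar angle-difference identity and the $x^{-1}$ gives an extra gain relative to $\mathfrak{g}_\nu$. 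In the transition regime (Lemma \ref{case2.5}) I would instead combine Lemma \ref{app-1} at $Rx$ with Lemma \ref{9.3.4analogue} at $rx$ and use $\mathrm{Ai}^2+\mathrm{Bi}^2,\mathrm{Ai}'^2+\mathrm{Bi}'^2\asymp 1$ uniformly in $z=O(\nu^\varepsilon)$ to again get an extra factor of $\nu^{-1}$ compared to the claimed error.

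The one case that demands real care is the sub-order regime $r\nu/R<rx\le\nu-\nu^{1/3+\varepsilon}$ of Lemma \ref{case3}, and in particular the subregime $\mathcal{G}_\nu(x)>1$. Here $J_\nu(rx)$, $J'_\nu(rx)$ decay like $e^{-\frac{2}{3}\nu\zeta_r^{3/2}}$ while $Y_\nu(rx)$, $Y'_\nu(rx)$ grow like $e^{\frac{2}{3}\nu\zeta_r^{3/2}}$, so the first pieces (those containing $J'_\nu(rx)Y_\nu(Rx)$ or $J_\nu(rx)Y'_\nu(Rx)$) are exponentially smaller than any polynomial error. The dangerous piece is therefore $x^{-1}J'_\nu(Rx)Y_\nu(rx)$ (and its analogue), which has to be compared against $Y'_\nu(rx)$ times the stated error of $\mathfrak{g}_\nu$. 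Using Olver's formulas, $|Y_\nu(rx)|/|Y'_\nu(rx)|\asymp\bigl(\nu^2-(rx)^2\bigr)^{-1/2}$, and $|J'_\nu(Rx)|\lesssim\bigl((Rx)^2-\nu^2\bigr)^{1/4}\bigl(\mathcal{G}_\nu(x)^{-1/6}+1\bigr)$, so
\begin{equation*}
\bigl((Rx)^2-\nu^2\bigr)^{-1/4}\frac{|J'_\nu(Rx)Y_\nu(rx)|}{|Y'_\nu(rx)|}\lesssim\bigl(\nu^2-(rx)^2\bigr)^{-1/2}\lesssim\nu^{-2/3-\varepsilon/2},
\end{equation*}
which is exactly why \eqref{case3-2NC} had to be weakened to \eqref{111}: this is the precise loss explained in Remark \ref{333}, and it is the reason one uses \eqref{111} rather than \eqref{case3-2NC} going forward. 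When $\mathcal{G}_\nu(x)\le 1$, an analogous but easier computation using the bounds $0<Rx-\nu\lesssim\nu^{1/3}$, $0<rx-r\nu/R\lesssim\nu^{1/3}$, and the trivial bound for $\mathrm{Ai}$, shows the mixed products are $O(\nu^{-2/3})$ after the normalization in \eqref{case3-1hnuNC}. Assembling these estimates across all four regimes, one concludes that $\mathscr{E}_{\nu,\delta}(x)$ is in every case dominated by the error term of $\mathfrak{g}_\nu(x)$, and the asymptotics transfer verbatim to $\widetilde{\mathfrak{h}}_{\nu,\delta}(x)$. The main obstacle is precisely this last book-keeping in the Airy regime of Lemma \ref{case3}, where one must track the exact polynomial powers of $\nu^2-(rx)^2$, $(Rx)^2-\nu^2$ and $\mathcal{G}_\nu(x)$ to see that each mixed term fits under the announced (slightly weakened) error.
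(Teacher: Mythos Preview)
Your proposal is correct and follows essentially the same approach as the paper: the paper's ``proof'' is simply the paragraph preceding the lemma, which declares the absorption of $\mathscr{E}_{\nu,\delta}(x)$ into the error terms to be ``routine and tedious'' and then singles out precisely the term $x^{-1}|J'_\nu(Rx)Y_\nu(rx)|$ in the regime of Lemma~\ref{case3} with $\mathcal{G}_\nu(x)>1$ as the one delicate case, arriving at the same bound $\bigl((Rx)^2-\nu^2\bigr)^{-1/4}|J'_\nu(Rx)Y_\nu(rx)|/|Y'_\nu(rx)|\lesssim(\nu^2-(rx)^2)^{-1/2}$ that you derive. Your write-up is in fact more thorough than what the paper offers, and your identification of this mixed term as the reason \eqref{case3-2NC} must be weakened to \eqref{111} matches the paper's Remark~\ref{333} exactly.
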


\begin{remark}
Obviously error terms of asymptotics of $\widetilde{\mathfrak{h}}_{\nu,\delta}(x)$ may also depend on $\delta$.
\end{remark}

Through Lemma \ref{777}, roughly speaking, the study of the positive zeros of $\mathfrak{h}_{\nu,\delta}(x)$  can be reduced to that of $\mathfrak{g}_{\nu}(x)$.

\subsection{Properties of zeros of cross-products} \label{subsec2.2}
We study positive zeros of $\mathfrak{f}_{\nu}$, $\mathfrak{g}_{\nu}$ and $\mathfrak{h}_{\nu,\delta}$ in this subsection. It is well-known that  $\mathfrak{f}_{\nu}$, $\mathfrak{g}_{\nu}$  are even functions whose zeros are all real and simple. See Cochran \cite{cochran:1964}.

For each nonnegative $\nu$, we denote the sequence of positive zeros of $\mathfrak{f}_{\nu}$ by $0<x_{\nu, 1}<\cdots<x_{\nu, k}<\cdots$, and similarly denote positive zeros of $\mathfrak{g}_{\nu}$ by $x'_{\nu, k}$ (with the convention of beginning with $k=0$ rather than with $k=1$ if $\nu>0$). We know that
\begin{equation*}
Rx_{\nu, k}>\nu \textrm{ and }  R x'_{\nu, k}>\nu.
\end{equation*}
The former is an extension of \cite[Lemma 2.5]{GMWW:2019} from integer $n$ to nonnegative $\nu$. The latter is a consequence of \cite[Lemma 5]{Cochran:1966}. In fact we know that $x_{\nu, k}$ and $x'_{\nu, k}$ both go to infinity as $\nu$ (or $k$) goes to infinity. This will be useful later.

\begin{proposition} \label{case0}
For all real $\nu\geq 0$ and integer $k\geq 0$ with $\nu+k\geq 1$, we have
\begin{equation*}
	x_{\nu,k}, x'_{\nu,k}>\frac{1}{R}\sqrt{\nu^2+\pi^2\left(k-\frac{1}{4} \right)^2}.
\end{equation*}
\end{proposition}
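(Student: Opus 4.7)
The plan is to exploit the Sturm oscillation property of the Bessel-type eigenvalue problem underlying $\mathfrak{f}_{\nu}$ and $\mathfrak{g}_{\nu}$. Set $u(t) = J_{\nu}(tx)Y_{\nu}(rx) - J_{\nu}(rx)Y_{\nu}(tx)$; then $u$ satisfies $(tu')' + (x^{2}t - \nu^{2}/t)\,u = 0$ on $(0,\infty)$ with $u(r) = 0$ identically in $x$, so $x = x_{\nu,k}$ is characterized as the $k$-th eigenvalue of the resulting Dirichlet Sturm--Liouville problem on $[r,R]$, and analogously $x'_{\nu,k}$ is the $k$-th Neumann eigenvalue. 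The Liouville substitution $w(t) = t^{1/2} u(t)$ followed by the rescaling $s = tx$ converts the radial equation into
\[
w_{ss} + Q(s)\,w = 0 \quad\text{on } [rx, Rx], \qquad Q(s) = 1 - \frac{\nu^{2} - 1/4}{s^{2}}.
\]

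For $\nu \geq 1/2$, $Q$ is increasing in $s$, so $Q(s) \leq Q(Rx) = 1 - (\nu^{2} - 1/4)/(Rx)^{2}$ on $[rx,Rx]$. In the Dirichlet case, Sturm oscillation produces $k+1$ zeros of $w$ on $[rx,Rx]$ (both endpoints included); comparing with $v_{ss} + Q(Rx)\,v = 0$ chosen so that $v(rx)=0$ forces $(R-r)x\,\sqrt{Q(Rx)} \geq k\pi$. Squaring and multiplying by $R^{2}/(R-r)^{2} > 1$ yields
\[
R^{2} x_{\nu,k}^{2} \;\geq\; \frac{R^{2} k^{2}\pi^{2}}{(R-r)^{2}} + \nu^{2} - \tfrac{1}{4} \;>\; k^{2}\pi^{2} + \nu^{2} - \tfrac{1}{4}.
\]
Since $k^{2}\pi^{2} - 1/4 > \pi^{2}(k - 1/4)^{2}$ for $k \geq 1$ (equivalent to $\pi^{2}(k/2 - 1/16) > 1/4$, easily checked), the desired estimate $R^{2} x_{\nu,k}^{2} > \nu^{2} + \pi^{2}(k-1/4)^{2}$ follows. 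The Neumann variant is handled analogously: the Liouville transform converts Neumann into the Robin-type condition $w'(t) = w(t)/(2t)$ at $t = r, R$, and a Pr\"ufer-phase accounting (or an extension of Sturm comparison with Robin BCs) yields the same estimate.

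For the remaining range $0 \leq \nu < 1/2$ (where $k\geq 1$ is forced by $\nu + k \geq 1$), $Q$ is decreasing in $s$ and the above Sturm argument collapses. Here the fallback is domain monotonicity: the Dirichlet eigenvalue on the annulus dominates that on the full disk $\mathbb{B}_R$, giving $R x_{\nu,k} \geq j_{\nu,k}$, the $k$-th positive zero of $J_{\nu}$. The problem then reduces to the classical Bessel-zero estimate $j_{\nu,k} > \sqrt{\nu^{2} + \pi^{2}(k-1/4)^{2}}$, which can be read off from the modulus--phase representation $J_{\nu} = M_{\nu}\cos\theta_{\nu}$, the identity $\theta_{\nu}(j_{\nu,k}) = (k-1/2)\pi$, and the inequalities $\theta_{\nu}(y) < \pi y\, g(\nu/y) - \pi/4$ and $\pi y\, g(\nu/y) < \sqrt{y^{2} - \nu^{2}}$ valid for $y > \nu$.

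The main obstacle I anticipate is the Neumann edge case $k = 0$, $\nu \geq 1$: the eigenfunction has no interior zeros in $(rx, Rx)$, making the Sturm oscillation argument vacuous, and one must squeeze out an extra gap of $\pi^{2}/16$ beyond the trivial $Rx'_{\nu,0} > \nu$. I would handle this by a direct sign analysis, showing $\mathfrak{g}_{\nu}(x) \neq 0$ on $\bigl(0,\, R^{-1}\sqrt{\nu^{2} + \pi^{2}/16}\,\bigr]$ via the explicit monotonicity of $J'_{\nu}$ and $Y'_{\nu}$ in the transition region $y \sim \nu$.
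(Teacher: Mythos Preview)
Your Sturm-comparison argument for $x_{\nu,k}$ with $\nu\ge 1/2$ is correct and gives a self-contained alternative to the paper's route (which cites \cite{GMWW:2019}, itself resting on the comparison $Rx_{\nu,k}\ge j_{\nu,k}$ together with McCann's bound). The Neumann variant for $\nu\ge 1/2$, $k\ge 1$ also goes through once one uses the modified Pr\"ufer angle $\Theta$ adapted to $Q_0=Q(Rx)$: one checks that $\Theta(Rx)-\Theta(rx)=k\pi+\arctan(2Rx\sqrt{Q_0})-\arctan(2rx\sqrt{Q_0})>k\pi$ while $\Theta'\le\sqrt{Q_0}$, which gives $(R-r)x\sqrt{Q_0}>k\pi$ as you claim. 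This step deserves to be written out rather than asserted.

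Two genuine gaps remain in the Neumann case. First, for $0<\nu<1/2$ and $k\ge1$ you offer nothing: your small-$\nu$ fallback (domain monotonicity) is a Dirichlet tool, unavailable for Neumann, and the Sturm comparison reverses since $Q$ is now decreasing, so comparing with $Q_0=Q(rx)$ yields an estimate that can be arbitrarily weak when $r/R$ is small. Second, your plan for $k=0$, $\nu\ge1$---``direct sign analysis via monotonicity of $J_\nu',Y_\nu'$ near $y\sim\nu$''---is not an argument; that transition region is governed by Airy behaviour and no elementary monotonicity is available. The paper closes both gaps by citing known scalar bounds: $Rx'_{\nu,k}\ge j'_{\nu,k+1}>j_{\nu,k}$ (Cochran--Kline plus the classical $j/j'$ interlacing) followed by McCann for $k\ge1$, and Watson's $j'_{\nu,1}>\sqrt{\nu(\nu+2)}>\sqrt{\nu^2+\pi^2/16}$ for $k=0$; the residual $\nu=0$ case collapses to the Dirichlet bound for $\mathfrak f_1$ via $J_0'=-J_1$, $Y_0'=-Y_1$.
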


\begin{proof}
See \cite[Lemma 2.5]{GMWW:2019} for the proof of the lower bound of $x_{\nu,k}$. Concerning $x'_{0, k}$,  since $J'_0=-J_1$ and $Y'_0=-Y_1$ (\cite[P. 361]{abram:1972}), it is also a zero of $\mathfrak f_1$ which implies
\begin{equation*}
	Rx'_{0,k}>\sqrt{1+\pi^2\left(k-\frac{1}{4}\right)^2}>\sqrt{\pi^2\left(k-\frac{1}{4}\right)^2},
\end{equation*}
as desired. It remains to consider $x'_{\nu, k}$ with $\nu>0$. If $\mathtt{j}'_{\nu,k+1}$ denotes the $(k+1)$-th positive zero of $J'_\nu$, we have
\begin{equation*}
x'_{\nu, k}\geq \mathtt{j}'_{\nu,k+1}/R,
\end{equation*}
as a consequence of results in \cite[Theorem 4]{Cochran:1966} and \cite[P. 38]{Kline:1948}. Note that
\begin{equation*}
\mathtt{j}'_{\nu,k+1}> \mathtt{j}_{\nu,k}
\end{equation*}
for $k\geq 1$ (\cite[P. 370]{abram:1972}) with $\mathtt{j}_{\nu,k}$ the $k$-th positive zero of $J_\nu$. Combining the above two inequalities with McCann's \cite[Corollary, P. 102]{McCann:1977} gives the desired bound for $k\geq 1$. When $k=0$ we have
\begin{equation*}
Rx'_{\nu, 0}\geq \mathtt{j}'_{\nu,1}>\sqrt{\nu(\nu+2)}>\sqrt{\nu^2+\left(\frac{\pi}{4}\right)^2}
\end{equation*}
as desired, where we have used the inequality (3) in \cite[P. 486]{watson:1966} and the assumption on the sum of $\nu$ and $k$.
\end{proof}


The following propositions reveal certain correspondence between the zero $x_{\nu,k}$ ($x'_{\nu,k}$) and the index $k$.

\begin{proposition} \label{thm111}
There exists a constant $c\in (0,1)$ such that for any $\varepsilon>0$ and all sufficiently large $\nu$ the positive zeros of $\mathfrak{f}_{\nu}$ and $\mathfrak{g}_\nu$ satisfy the following
\begin{equation}\label{thm111-1}
\mathcal{G}_{\nu}(x_{\nu,k})=
\left\{\begin{array}{ll}
\!\! k+O(x_{\nu,k}^{-1}),   &   \textrm{if $rx_{\nu,k}\ge (1+c)\nu$,}\\
\!\! k+O(z_{\nu,k}^{-\frac{3}{2}}),  &\textrm{if $\nu+\nu^{\frac{1}{3}+\varepsilon} \le rx_{\nu,k}<(1+c)\nu$,}\\
\!\! k-\psi_1(z_{\nu,k})+O(\nu^{-\frac{2}{3}+3\varepsilon}),& \textrm{if $\nu-\nu^{\frac{1}{3}+\varepsilon}\!<\!rx_{\nu,k}<\nu+\nu^{\frac{1}{3}+\varepsilon}$,}\\
\!\! k-\frac{1}{4}+E_{\nu,k},& \textrm{if $rx_{\nu,k}\le \nu-\nu^{\frac{1}{3}+\varepsilon}$,}
\end{array}\right.
\end{equation}
and
\begin{equation}\label{thm111-1NC}
\mathcal{G}_{\nu}(x'_{\nu,k})=
\left\{\begin{array}{ll}
\!\! k+O((x'_{\nu,k})^{-1}),   &  \textrm{if $rx'_{\nu,k}\ge (1+c)\nu$,}\\
\!\! k+O((z'_{\nu,k})^{-\frac{3}{2}}),  &\textrm{if $\nu+\nu^{\frac{1}{3}+\varepsilon} \!\le\! rx'_{\nu,k}\!<\!(1+c)\nu$,}\\
\!\! k+\psi_2(z'_{\nu,k})+O(\nu^{-\frac{2}{3}+3\varepsilon}),   &\textrm{if $\nu-\nu^{\frac{1}{3}+\varepsilon}\!<\!rx'_{\nu,k}\!<\!\nu+\nu^{\frac{1}{3}+\varepsilon}$,}\\
\!\! k+\frac{1}{4}+E'_{\nu,k},    & \textrm{if $rx'_{\nu,k}\le \nu-\nu^{\frac{1}{3}+\varepsilon}$,}
\end{array}\right.
\end{equation}
where $z_{\nu,k}$ and $z'_{\nu,k}$ are determined by $rx_{\nu,k}=\nu+z_{\nu,k} \nu^{1/3}$ and $rx'_{\nu,k}=\nu+z'_{\nu,k} \nu^{1/3}$, $\psi_1$ and $\psi_2$ are the functions appearing in Lemma \ref{case2.5}, and remainders $E_{\nu,k}$ and $E'_{\nu,k}$ satisfy
\begin{equation*}
|E_{\nu,k}|<\min\left\{3/8, O\left(\mathcal{G}_{\nu}(x_{\nu,k})^{-1}\right)\right\},\quad k\in\mathbb{N},
\end{equation*}
\begin{equation*}
|E'_{\nu,0}|<1/8,
\end{equation*}
\begin{equation*}
|E'_{\nu,k}|<\min\left\{3/8, O\left(\mathcal{G}_{\nu}(x'_{\nu,k})^{-1}+\nu^{-\frac{2}{3}-\frac{\varepsilon}{2}}\right)\right\}, \quad k\in\mathbb{N}.
\end{equation*}
\end{proposition}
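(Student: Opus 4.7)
The plan is to substitute $x = x_{\nu,k}$ (resp.\ $x = x'_{\nu,k}$) into the asymptotic formulas of Lemmas \ref{case111}--\ref{case3} and invert the leading bracket, which must vanish at a zero. In ranges 1--3 and in the sub-regime $\mathcal{G}_\nu(x_{\nu,k}) > 1$ of range 4, each asymptotic has the shape ``nonvanishing prefactor times $[\sin(\pi\mathcal{G}_\nu(x) + \pi\phi) + (\text{error})]$'', with phase $\phi$ equal to $0$, $0$, $\psi_1(z)$ (resp.\ $-\psi_2(z)$), or $\pm 1/4$ across the four cases. Using the estimate $|\sin\theta|\gtrsim \mathrm{dist}(\theta,\pi\mathbb{Z})$ near integer multiples of $\pi$, forcing the bracket to vanish yields
\begin{equation*}
\mathcal{G}_\nu(x_{\nu,k}) + \phi(z_{\nu,k}) = m_k + O(\text{error})
\end{equation*}
for some integer $m_k$. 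In the sub-regime $\mathcal{G}_\nu(x_{\nu,k})\leq 1$ of range 4 the sine reduction is not available, and we instead use the Airy form \eqref{case3-1} (resp.\ \eqref{case3-1hnuNC}) directly: the only zero of $\mathrm{Ai}(-t)$ (resp.\ $\mathrm{Ai}'(-t)$) lying in $[0,(3\pi/2)^{2/3}]$ is the first one, so only $k=1$ (resp.\ $k=0$) can fall in this window, and a perturbative expansion about that single Airy zero produces the a priori bounds $|E_{\nu,1}|<3/8$ and $|E'_{\nu,0}|<1/8$.

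The main obstacle is identifying the integer $m_k$ with $k$ consistently across the four regimes. For this we combine: (i) the strict monotonicity of $\mathcal{G}_\nu$ on $[\nu/R, \infty)$, which follows from the strict monotonicity of $g$ on $[0,1]$, together with Proposition \ref{case0} ensuring $Rx_{\nu,k}, Rx'_{\nu,k} > \nu$; and (ii) a sign-change counting argument in range 1, where $\mathfrak{f}_\nu\asymp \sin(\pi\mathcal{G}_\nu)$ has exactly one sign change per unit increment of $\mathcal{G}_\nu$. Since $\mathcal{G}_\nu(x) \sim (R-r)x/\pi$ as $x\to\infty$, the zeros in range 1 have asymptotic spacing $\pi/(R-r)$ and the density matches, pinning $m_k = k$ for all sufficiently large $k$. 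We then propagate the identification through ranges 2, 3, and 4 by continuity of the phase at each interface: $\psi_1(+\infty) = \psi_2(+\infty) = 0$ from Lemma \ref{case2.5} bridges the range 2/3 transition with no integer jump, and $\psi_1(-\infty) = \psi_2(-\infty) = 1/4$ matches the $\pm 1/4$ phase emerging from the Airy asymptotic at the range 3/4 interface.

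The quantitative bounds are then assembled in two pieces. The $O(\mathcal{G}_\nu^{-1})$ contribution (and the extra $O(\nu^{-2/3-\varepsilon/2})$ for $\mathfrak{g}_\nu$ inherited from \eqref{111}) follows directly by inverting the sine on the corresponding asymptotic. The absolute bound $|E_{\nu,k}| < 3/8$ (resp.\ $|E'_{\nu,0}|<1/8$) is the a priori statement that $\mathcal{G}_\nu(x_{\nu,k})$ is sandwiched in $(k-1/4-3/8,\,k-1/4+3/8)$, enforced by the monotonicity of $\mathcal{G}_\nu$ and the uniform separation of consecutive zeros. The slack from the $O(\nu^{-2/3+2.5\varepsilon})$ errors of Lemma \ref{case2.5} to the claimed $O(\nu^{-2/3+3\varepsilon})$ in the third case absorbs the losses from inverting $\sin$ and from using the boundedness of the images of $\psi'_i$ (Lemma \ref{case2.5}) to express the answer in terms of $z_{\nu,k}$. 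The hard part will be establishing the consecutive-zero separation uniformly in both $\nu$ and $k$, particularly across the two narrow transition windows where the trigonometric and Airy pictures must agree quantitatively, rather than merely asymptotically.
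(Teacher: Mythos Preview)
Your outline captures the main computational steps correctly (substitute the zero into the appropriate asymptotic, force the bracket to vanish, invert the sine), and your identification of the phases $0,\,0,\,\psi_i,\,1/4$ across the four ranges is accurate. However, the decisive step of identifying the integer $m_k$ with $k$ is handled differently in the paper, and your proposed route has a gap.

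The paper does not use a density-matching-plus-continuity-propagation argument. Instead, it fixes in advance disjoint intervals $(a_k,b_k)$ chosen so that $\mathcal{G}_\nu$ maps $(a_0,b_0)$ onto $(1/6,3/8)$ and $(a_k,b_k)$ onto $(k-1/8,k+3/8)$ for $k\geq 1$; it then verifies the sign change $\mathfrak{g}_\nu(a_k)\mathfrak{g}_\nu(b_k)<0$ using the sine asymptotics when $\mathcal{G}_\nu(x)$ is large and the Airy asymptotic \eqref{case3-1hnuNC} together with the known location of the zeros of $\mathrm{Ai}'(-x)$ when $\mathcal{G}_\nu(x)$ is bounded. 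This yields at least one zero per interval. The crucial complementary input is Cochran's global count (\cite[Theorem on P.~583]{cochran:1964}): there are exactly $2s+2$ zeros of $\mathfrak{g}_\nu$ in $|z|<(s+1/2)\pi/(R-r)$, hence exactly $s+1$ positive ones, and since there are $s+1$ disjoint intervals each containing at least one zero, each contains exactly one. This simultaneously pins $m_k=k$ and yields the a priori bounds $|E'_{\nu,k}|<3/8$ and $|E'_{\nu,0}|<1/8$ directly from the $\mathcal{G}_\nu$-images of the intervals.

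Your argument, by contrast, relies on showing that between consecutive sine-crossings there is \emph{exactly} one zero of $\mathfrak{f}_\nu$ (or $\mathfrak{g}_\nu$), then propagating this through the transition windows. The density argument in range~1 only gives $m_k=k+C(\nu)$ for some constant that could a priori depend on $\nu$; fixing $C(\nu)=0$ by anchoring at the first Airy zero and propagating upward requires ruling out extra zeros in every interval, which is precisely the uniform zero-separation estimate you flag as ``the hard part.'' That hard part is what Cochran's count makes unnecessary. Without it, your argument is incomplete: the asymptotic $\mathfrak{f}_\nu=(\text{positive})[\sin(\pi\mathcal{G}_\nu)+O(\text{error})]$ guarantees at least one sign change per unit of $\mathcal{G}_\nu$, but not at most one, and no independent upper bound on the zero count is supplied. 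The fix is simply to invoke the global zero count, after which the interval construction immediately delivers both the identification and the quantitative bounds on $E_{\nu,k}$, $E'_{\nu,k}$ without any propagation across regimes.
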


\begin{proof}

We only prove the $\mathfrak{g}_\nu$ part. It is not hard to check that the function
\begin{equation*}
\mathcal{G}_\nu: [\nu/R, \infty)\rightarrow [0, \infty)
\end{equation*}
is continuous, strictly increasing and mapping $(\nu/R, (s+1/2)\pi/(R-r))$ onto $(0, s+1/2+O(\nu^{-1}))$ for any integer $s>\nu^3$. Therefore for each integer $0\leq k\leq s$ there exists an interval $(a_k, b_k)\subset (\nu/R, (s+1/2)\pi/(R-r))$ such that $\mathcal{G}_\nu$ maps $(a_0, b_0)$ to $(1/6, 3/8)$ and maps $(a_k, b_k)$ to $(k-1/8, k+3/8)$ for $1\le k\le s$ bijectively. All these intervals $(a_k, b_k)$'s are clearly disjoint.

We claim that if $\nu$ is sufficiently large then for each $0\leq k\leq s$
\begin{equation}
\mathfrak g_\nu(a_k)\mathfrak g_\nu(b_k)<0.\label{IVT-conditionNC}
\end{equation}
Assuming \eqref{IVT-conditionNC}, we know that there exists at least one zero of $\mathfrak g_\nu$ in each $(a_k, b_k)$. On the other hand Cochran's \cite[Theorem on P. 583]{cochran:1964} shows that there are exactly $2s+2$ zeros of $\mathfrak g_\nu$ within the disk $B(0,(s+1/2)\pi/(R-r))$ if $s$ is sufficiently large. Since $\mathfrak g_\nu$ is even, we conclude that $\mathfrak g_\nu$ has one and only one zero in each $(a_k, b_k)$, which is $x'_{\nu,k}$ by definition.

At each $x'_{\nu,k}\in (a_k, b_k)$ we have $\mathfrak g_\nu(x'_{\nu,k})=0$. We thus have $\mathcal{G}_\nu(x'_{\nu,k})\in (\mathcal{G}_\nu(a_k), \mathcal{G}_\nu(b_k))$ which implies $|\mathcal{G}_\nu(x'_{\nu,0})-1/4|<1/8$ and $|\mathcal{G}_\nu(x'_{\nu,k})-(k+1/4)|<3/8$ for $k\in\mathbb{N}$. In particular we have $\mathcal{G}_\nu(x'_{\nu,k})>7/8$. We apply either \eqref{case111-1NC},  \eqref{case222-1NC}, \eqref{case2.5-1NC} or \eqref{111}, and conclude that each factor involving both the sine function and $\mathcal{G}_\nu(x'_{\nu,k})$ equals zero. Since $\mathcal{G}_\nu(x'_{\nu,k})-\Delta$ is contained in the interval $[k-3/8, k+3/8]$ for any $0\leq \Delta\leq 1/4$, we apply the arcsine function to obtain all desired asymptotics in \eqref{thm111-1NC}.

It remains to verify \eqref{IVT-conditionNC}. When $(a_k, b_k)$ is contained in $\{x>\nu/R : \mathcal{G}_\nu(x)>C+0.5\}$ with a sufficiently large constant $C\in\mathbb{N}$, we can readily verify \eqref{IVT-conditionNC} by using the asymptotics \eqref{case111-1NC},  \eqref{case222-1NC}, \eqref{case2.5-1NC} or \eqref{111} since
\begin{equation*}
	\mathcal{G}_\nu(a_k)-\Delta_1 \!\in\! \left[k-\frac{3}{8}, k-\frac{1}{8}\right]
	\textrm{ and }\mathcal{G}_\nu(b_k)-\Delta_2\!\in \!\left[k+\frac{1}{8}, k+\frac{3}{8}\right]
\end{equation*}
for all $0\leq \Delta_1, \Delta_2\leq 1/4$. When $(a_k, b_k)$ is contained in $\{x>\nu/R : \mathcal{G}_\nu(x)\leq C+0.5\}$,  we use the asymptotics \eqref{case3-1hnuNC}. The sign of $\mathfrak g_\nu$ depends on that of
\begin{equation}
\mathrm{Ai}'\left(-\left(3\pi \mathcal{G}_\nu(x)/2 \right)^{2/3}\right)+O_C\left(\nu^{-2/3}\right). \label{theorem-1NC}
\end{equation}
If $k=0$ then
\begin{equation*}
\mathrm{Ai}'\left(-\left(3\pi \mathcal{G}_\nu(a_0)/2 \right)^{2/3}\right)=\mathrm{Ai}'\left(-\left(\pi /4 \right)^{2/3}\right)<0 
\end{equation*}
but
\begin{equation*}
\mathrm{Ai}'\left(-\left(3\pi \mathcal{G}_\nu(b_0)/2 \right)^{2/3}\right)=\mathrm{Ai}'\left(-\left(9\pi /16 \right)^{2/3}\right)>0.  
\end{equation*}
If $k\ge 1$,
as in the proof of Lemma \ref{case2.5}, we denote by $t_k$ the $k$th zero of the function $\textrm{Ai}'(-x)$. One can derive that
\begin{equation*}
t_k=\left[\frac{3\pi}{2}\left(k-\frac{3}{4}+\beta'_k\right)\right]^{2/3}
\end{equation*}
with $|\beta'_k|<0.05$ by using results in \cite[P. 214 \& 405]{olver:1997}). Therefore
\begin{align}
t_k\in &\left(\left[\frac{3\pi}{2}\left(k-0.8\right)\right]^{2/3}, \left[\frac{3\pi}{2}\left(k-0.7\right)\right]^{2/3} \right) \nonumber \\
&\subsetneq \left(\left[\frac{3\pi}{2}\mathcal{G}_\nu(a_k)\right]^{2/3}, \left[\frac{3\pi}{2}\mathcal{G}_\nu(b_k)\right]^{2/3} \right). \label{theorem-2NC}
\end{align}
Since $\textrm{Ai}'(-x)$ oscillates around zero for positive $x$ and the intervals in \eqref{theorem-2NC} are pairwise disjoint, the signs of \eqref{theorem-1NC} at $x=a_k$ and $b_k$ must be opposite whenever $\nu$ is sufficiently large, which ensures \eqref{IVT-conditionNC} in this case.
\end{proof}


For relatively small $\nu$ we have the following.

\begin{proposition} \label{thm222}
For any $V\in\mathbb{N}$ there exists a constant $K>0$ such that if $0\leq \nu\leq V$ and $k\geq K$ then the positive zeros  of $\mathfrak f_\nu$ and $\mathfrak g_\nu$ satisfy
\begin{equation}
\mathcal{G}_\nu(x_{\nu,k})=k+O\left(x_{\nu,k}^{-1}\right) \label{thm222-2}
\end{equation}
and
\begin{equation}\label{thm222-2NC}
\mathcal{G}_\nu(x'_{\nu,k})=k+O\left((x'_{\nu,k})^{-1}\right).
\end{equation}
\end{proposition}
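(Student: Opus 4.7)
The plan is to reduce matters to the first (and simplest) case of Proposition \ref{thm111}. Since $\nu \leq V$ is bounded while the zeros $x_{\nu,k}$ and $x'_{\nu,k}$ tend to infinity as $k\to\infty$ (as noted at the start of Section \ref{subsec2.2}), for any fixed $c>0$ we can choose $K$ large enough that for all $0\leq \nu\leq V$ and $k\geq K$ we have $rx_{\nu,k}, rx'_{\nu,k}\geq \max\{(1+c)\nu, 10\}$. In this regime the asymptotics \eqref{case111-1} and \eqref{case111-1NC} of Lemma \ref{case111} apply, and crucially their implicit constants depend only on $c$, hence uniformly in $\nu\in[0,V]$.

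I would then mimic the sign-change/counting argument used in the proof of Proposition \ref{thm111}, but specialized to this single asymptotic regime. Since $\mathcal{G}_\nu:[\nu/R,\infty)\to[0,\infty)$ is continuous and strictly increasing, for each large integer $k$ one picks an interval $(a_k,b_k)$ mapped bijectively by $\mathcal{G}_\nu$ onto $(k-1/8, k+1/8)$. The main term $\sin(\pi\mathcal{G}_\nu(x))$ in \eqref{case111-1} takes opposite signs at $a_k$ and $b_k$, and since the error $O_c(x^{-1})=O_V(k^{-1})$ tends to zero, enlarging $K$ further guarantees $\mathfrak{f}_\nu(a_k)\mathfrak{f}_\nu(b_k)<0$. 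Hence $\mathfrak{f}_\nu$ has at least one zero in each $(a_k,b_k)$. Applying Cochran's counting result \cite{cochran:1964} inside a disk of the form $B(0,(s+1/2)\pi/(R-r))$, exactly as invoked in the proof of Proposition \ref{thm111}, one verifies that this zero is unique in $(a_k,b_k)$ and that the enumeration by $k$ coincides with the ordering $x_{\nu,1}<x_{\nu,2}<\cdots$. Taking arcsine in \eqref{case111-1} at $x=x_{\nu,k}$ then yields \eqref{thm222-2}. The argument for $\mathfrak{g}_\nu$ is identical, using \eqref{case111-1NC} in place of \eqref{case111-1}, to obtain \eqref{thm222-2NC}.

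The only point requiring care is the indexing: one must confirm not only that $(a_k,b_k)$ contains a zero but that this zero is precisely the $k$-th positive zero $x_{\nu,k}$ (resp.\ $x'_{\nu,k}$). This is the main technical obstacle, and it is resolved exactly as in Proposition \ref{thm111}, by combining the monotonicity of $\mathcal{G}_\nu$ (which makes the intervals $(a_k,b_k)$ pairwise disjoint and correctly ordered) with Cochran's total count of zeros inside a large disk, together with the evenness of $\mathfrak{f}_\nu$ and $\mathfrak{g}_\nu$. Uniformity of all constants in $\nu\in[0,V]$ is then automatic from the uniformity of the $O_c$ bound in Lemma \ref{case111}.
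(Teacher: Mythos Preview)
Your approach is essentially the paper's, and the reduction to Lemma \ref{case111} followed by a sign-change argument and Cochran's count is exactly right. There is, however, one small wrinkle in the indexing step that you gloss over. In Proposition \ref{thm111} the intervals $(a_k,b_k)$ are constructed for \emph{every} $0\le k\le s$, so the pigeonhole ``$s+1$ disjoint intervals, $s+1$ positive zeros'' immediately forces the zero in $(a_k,b_k)$ to be $x'_{\nu,k}$. Here you only build $(a_k,b_k)$ for $k\ge K$, so ``total count plus evenness'' leaves the first $K$ zeros unaccounted for and does not by itself pin down the index.

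The paper closes this gap with one additional observation: for $k$ large, $(a_k,b_k)\subset\bigl[(k-\tfrac12)\pi/(R-r),(k+\tfrac12)\pi/(R-r)\bigr)$, since $\mathcal{G}_\nu((k\pm\tfrac12)\pi/(R-r))=k\pm\tfrac12+O(V^2/k)$. Applying Cochran's count at radii $(k-\tfrac12)\pi/(R-r)$ and $(k+\tfrac12)\pi/(R-r)$ shows this outer interval contains exactly one positive zero, and the count up to $(k+\tfrac12)\pi/(R-r)$ identifies it as $x'_{\nu,k}$. Once you add this nesting step your argument is complete and matches the paper's. (Your choice of target interval $(k-\tfrac18,k+\tfrac18)$ rather than the paper's $(k-\tfrac18,k+\tfrac38)$ is harmless.)
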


\begin{proof}
We only prove the $\mathfrak{g}_\nu$ part. Note that the asymptotics \eqref{case111-1NC} of $\mathfrak g_\nu$ is valid for all nonnegative $\nu$. For $0\leq \nu\leq V$ and $x>C_V$ for a sufficiently large constant $C_V$ we apply \eqref{case111-1NC} with $c=1$ and its error term $O_c(x^{-1})$ less than $1/100$, in particular on the interval
\begin{equation}
\left[\frac{(k-1/2)\pi}{R-r}, \frac{(k+1/2)\pi}{R-r}\right), \label{thm222-1NC}
\end{equation}
for any sufficiently large $k$. We observe that the interval $(a_k, b_k)$ (appearing in the proof of Proposition \ref{thm111}) is a subinterval of \eqref{thm222-1NC} if $k$ is sufficiently large. Indeed, this follows from $\mathcal{G}_\nu((a_k, b_k))=(k-1/8, k+3/8)$ and  $\mathcal{G}_\nu((k\pm 1/2)\pi/(R-r))=k\pm 1/2+O(V^2/k)$.

With \eqref{case111-1NC} it is obvious that $\mathfrak g_\nu(a_k)\mathfrak g_\nu(b_k)<0$. Hence $\mathfrak g_\nu$ has at least one zero in $(a_k, b_k)$. On the other hand, Cochran \cite{cochran:1964} tells us that there  exists at most one zero in the interval \eqref{thm222-1NC}. Thus $\mathfrak g_\nu$ has exactly one zero in $(a_k, b_k)$, which is $x'_{\nu,k}$. Thus
\begin{equation*}
\sin\left( \pi\mathcal{G}_\nu(x'_{\nu,k})\right)+O\left((x'_{\nu,k})^{-1}\right)=0.
\end{equation*}
Applying the arcsine function  yields the desired result.
\end{proof}

\begin{remark}
A by-product of the above argument is the following bounds: if $k$ is sufficiently large then
\begin{equation*}
	\frac{\pi(k-1/2)}{R-r}<\mathcal{G}^{-1}_\nu\left(k-\frac{3}{8}\right)<x_{\nu,k}<\mathcal{G}^{-1}_\nu\left(k+\frac{1}{8}\right)<\frac{\pi(k+1/2)}{R-r}.
\end{equation*}
and
\begin{equation*}
\frac{\pi(k-1/2)}{R-r}<\mathcal{G}^{-1}_\nu\left(k-\frac{1}{8}\right)<x'_{\nu,k}<\mathcal{G}^{-1}_\nu\left(k+\frac{3}{8}\right)<\frac{\pi(k+1/2)}{R-r}.
\end{equation*}
In particular, this result is better than Proposition \ref{case0} as $k$ goes to infinity.
\end{remark}

By Theorem \ref{thm444} (that will be proved independently in Section \ref{sec3}), for each fixed real $\delta$ and all $\nu\geq |\delta|$, zeros of $\widetilde{\mathfrak{h}}_{\nu,\delta}$ are all real and simple. We denote positive zeros of $\widetilde{\mathfrak{h}}_{\nu,\delta}$ (which are exactly positive zeros of $\mathfrak{h}_{\nu,\delta}$) by $x''_{\nu, k}$ (with the convention of beginning with $k=0$ if $\nu>|\delta|$ and with $k=1$ if $\nu=|\delta|$).

The following two propositions reveal correspondence between the zero $x''_{\nu,k}$ and the index $k$. Their proofs rely on Theorem \ref{thm333} and are essentially the same as those of Propositions \ref{thm111} and \ref{thm222} since $\mathfrak{g}_{\nu}$ and $\widetilde{\mathfrak{h}}_{\nu,\delta}$ have formally the same asymptotics (by  Lemma \ref{777}).

\begin{proposition}\label{estofhx''}
There exists a constant $c\in (0,1)$ such that for any $\varepsilon>0$, $\delta\in\mathbb{R}$ and all sufficiently large $\nu$,
\begin{equation}\label{thm111-1hnuNC}
\mathcal{G}_{\nu}(x''_{\nu,k})=
\left\{\begin{array}{ll}
				\!\! k+O((x''_{\nu,k})^{-1}),   &  \!\textrm{if $rx''_{\nu,k}\ge (1+c)\nu$,}\\
				\!\! k+O((z''_{\nu,k})^{-\frac{3}{2}}),  & \!\textrm{if $\nu+\nu^{\frac{1}{3}+\varepsilon} \!\le\! rx''_{\nu,k}\!<\!(1+c)\nu$,}\\
				\!\! k+\psi_2(z''_{\nu,k})+O(\nu^{-\frac{2}{3}+3\varepsilon}),   & \! \textrm{if $\nu-\nu^{\frac{1}{3}+\varepsilon}\!<\!rx''_{\nu,k}\!<\!\nu+\nu^{\frac{1}{3}+\varepsilon}$,}\\
				\!\! k+\frac{1}{4}+E''_{\nu,k},    & \textrm{if $rx''_{\nu,k}\le \nu-\nu^{\frac{1}{3}+\varepsilon}$,}
\end{array}\right.
\end{equation}
where $z''_{\nu,k}$ is determined by $rx''_{\nu,k}=\nu+z''_{\nu,k} \nu^{1/3}$, $\psi_2$ is the function appearing in Lemma \ref{case2.5}, and the remainder $E''_{\nu,k}$ satisfies $|E''_{\nu,0}|<1/8$ and
\begin{equation*}
				|E''_{\nu,k}|<\min\left\{3/8, O\left(\mathcal{G}_{\nu}(x''_{\nu,k})^{-1}+\nu^{-\frac{2}{3}-\frac{\varepsilon}{2}}\right)\right\}, \quad k\in \mathbb{N}.
\end{equation*}
\end{proposition}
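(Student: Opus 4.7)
My plan is to mirror the proof of Proposition \ref{thm111} (the $\mathfrak{g}_\nu$ part) essentially verbatim, exploiting the fact that by Lemma \ref{777} the function $\widetilde{\mathfrak{h}}_{\nu,\delta}(x)$ satisfies the same asymptotic formulas \eqref{case111-1NC}, \eqref{case222-1NC}, \eqref{case2.5-1NC}, \eqref{case3-1hnuNC}, \eqref{111} as $\mathfrak{g}_\nu(x)$ (with error constants possibly depending additionally on $\delta$), and replacing Cochran's zero-count for $\mathfrak{g}_\nu$ with the zero-counting result of Theorem \ref{thm333}. Since $\mathcal{G}_\nu$ is strictly increasing and continuous on $[\nu/R,\infty)$, I first fix a large integer $s>\nu^{3}$, and for each $0\le k\le s$ pick disjoint subintervals $(a_k,b_k)\subset (\nu/R,(s+1/2)\pi/(R-r))$ such that $\mathcal{G}_\nu$ sends $(a_0,b_0)$ bijectively onto $(1/6,3/8)$ and $(a_k,b_k)$ onto $(k-1/8,k+3/8)$ for $k\ge 1$.

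The key step is to prove the sign-change relation
\[
\widetilde{\mathfrak{h}}_{\nu,\delta}(a_k)\,\widetilde{\mathfrak{h}}_{\nu,\delta}(b_k)<0,\qquad 0\le k\le s.
\]
This is done exactly as in the proof of Proposition \ref{thm111}, splitting into the regimes $rx\ge (1+c)\nu$, $\nu+\nu^{1/3+\varepsilon}\le rx<(1+c)\nu$, $\nu-\nu^{1/3+\varepsilon}<rx<\nu+\nu^{1/3+\varepsilon}$, and $rx\le \nu-\nu^{1/3+\varepsilon}$, in each case plugging in the corresponding asymptotic for $\widetilde{\mathfrak{h}}_{\nu,\delta}$ provided by Lemma \ref{777}. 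When $(a_k,b_k)$ lies in the region where $\mathcal{G}_\nu(x)>C+1/2$ for a suitable large constant $C$, the sine factor changes sign between endpoints since $\mathcal{G}_\nu(a_k)-\Delta_1\in[k-3/8,k-1/8]$ and $\mathcal{G}_\nu(b_k)-\Delta_2\in[k+1/8,k+3/8]$ for $\Delta_j\in[0,1/4]$, and the error terms are negligible. In the remaining Airy regime where $\mathcal{G}_\nu\le C+1/2$, one uses \eqref{case3-1hnuNC} (now for $\widetilde{\mathfrak{h}}_{\nu,\delta}$) and the location of the negative zeros $-t_k$ of $\mathrm{Ai}'$ via the classical expansion $t_k=[3\pi(k-3/4+\beta_k')/2]^{2/3}$ with $|\beta_k'|<0.05$, exactly as in the $\mathfrak{g}_\nu$ case.

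Granting the sign change, $\widetilde{\mathfrak{h}}_{\nu,\delta}$ has at least one zero in each $(a_k,b_k)$, producing at least $s+1$ positive zeros in the disk $B(0,(s+1/2)\pi/(R-r))$. On the other hand, Theorem \ref{thm333} supplies the upper bound $2s+2$ for the total number of zeros of $\widetilde{\mathfrak{h}}_{\nu,\delta}$ (equivalently of $\mathfrak{h}_{\nu,\delta}$) in that disk when $s$ is sufficiently large; combined with the fact that $\widetilde{\mathfrak{h}}_{\nu,\delta}$ is even (being a polynomial in $x^2$ times the Bessel cross-products, after clearing $x^{-2\delta}$ in \eqref{444}), we conclude there is exactly one positive zero in each $(a_k,b_k)$, which must be $x''_{\nu,k}$ by its ordering definition. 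In particular $\mathcal{G}_\nu(x''_{\nu,k})>7/8$ for $k\ge 1$ and $\mathcal{G}_\nu(x''_{\nu,0})\in(1/6,3/8)$.

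Finally, for the asymptotic formula, I substitute $x=x''_{\nu,k}$ into the appropriate asymptotic among Lemma \ref{777}, so that the sine factor must vanish, and then apply $\arcsin$. Since $\mathcal{G}_\nu(x''_{\nu,k})-\Delta$ stays in $[k-3/8,k+3/8]$ for $\Delta\in[0,1/4]$, the principal branch inversion yields the four cases in \eqref{thm111-1hnuNC} with remainders dictated directly by the error terms of the corresponding asymptotic of $\widetilde{\mathfrak{h}}_{\nu,\delta}$; in the last (Airy) regime the improved remainder $O(\mathcal{G}_\nu(x''_{\nu,k})^{-1}+\nu^{-2/3-\varepsilon/2})$ comes from inverting \eqref{111} (via Lemma \ref{777}) rather than \eqref{case3-1hnuNC}. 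The only part that is more than bookkeeping is verifying the sign-change inequality across all four regimes uniformly in $\nu$; this is the main obstacle, but it is resolved by the uniform error terms of Lemma \ref{777} and the zero-count from Theorem \ref{thm333}, so the rest reduces to checking that the choice of $(a_k,b_k)$ keeps the implicit constants under control exactly as in the proof of Proposition \ref{thm111}.
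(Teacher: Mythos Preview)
Your proposal is correct and follows essentially the same approach as the paper. The paper itself simply states that the proof ``relies on Theorem \ref{thm333} and is essentially the same as that of Proposition \ref{thm111} since $\mathfrak{g}_{\nu}$ and $\widetilde{\mathfrak{h}}_{\nu,\delta}$ have formally the same asymptotics (by Lemma \ref{777})''; you have faithfully unwound those details, including the construction of the intervals $(a_k,b_k)$, the sign-change verification in each regime, the exact zero count $2s+2$ from Theorem \ref{thm333} (valid since $\nu>|\delta|$ for sufficiently large $\nu$) combined with evenness, and the $\arcsin$ inversion.
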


\begin{proposition}\label{prop1}
For any $\delta\in\mathbb{R}$ and $V\in\mathbb{N}$ there exists a constant $K>0$ such that if $|\delta|\leq \nu\leq V$ and $k\geq K$ then
	\begin{equation}\label{thm222-2hnuNC}
		\mathcal{G}_{\nu}(x''_{\nu,k})=k+O\left((x''_{\nu,k})^{-1}\right).
	\end{equation}
\end{proposition}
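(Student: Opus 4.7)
The plan is to mimic the proof of Proposition \ref{thm222} almost verbatim, substituting $\widetilde{\mathfrak{h}}_{\nu,\delta}$ for $\mathfrak g_\nu$ and invoking Lemma \ref{777} in place of Lemma \ref{case111} and Theorem \ref{thm333} in place of the Cochran-type counting statement from \cite{cochran:1964}. Since both $\delta$ and $V$ are fixed parameters, any dependence of the implicit constants on them is harmless.

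First I would fix $\delta\in\mathbb R$, $V\in\mathbb N$, and restrict to $|\delta|\leq \nu\leq V$. By Lemma \ref{777}, the asymptotic \eqref{case111-1NC} continues to hold with $\mathfrak g_\nu(x)$ replaced by $\widetilde{\mathfrak{h}}_{\nu,\delta}(x)$, the implicit constant now allowed to depend on $\delta$. Taking $c=1$, the error term $O(x^{-1})$ can be forced below $1/100$ once $x\geq C_{V,\delta}$ for a sufficiently large constant $C_{V,\delta}$.

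Next, exactly as in the proof of Proposition \ref{thm222}, for all sufficiently large $k$ I would construct the subinterval $(a_k,b_k)\subset(\nu/R,\infty)$ on which the continuous strictly increasing function $\mathcal G_\nu$ maps bijectively onto $(k-1/8,\,k+3/8)$. The estimate $\mathcal G_\nu\bigl((k\pm 1/2)\pi/(R-r)\bigr)=k\pm 1/2+O(V^2/k)$ guarantees the inclusion
\[
(a_k,b_k)\subset\left[\tfrac{(k-1/2)\pi}{R-r},\,\tfrac{(k+1/2)\pi}{R-r}\right)
\]
once $k$ is large enough depending on $V$. The sine factor in the modified \eqref{case111-1NC} then has opposite signs at $a_k$ and $b_k$, so $\widetilde{\mathfrak{h}}_{\nu,\delta}(a_k)\widetilde{\mathfrak{h}}_{\nu,\delta}(b_k)<0$, producing at least one positive zero in $(a_k,b_k)$.

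Finally, I would invoke Theorem \ref{thm333} in its ``at most one positive zero per interval of length $\pi/(R-r)$'' form (or equivalently, by counting zeros in a large disk and matching the total against the collection $\{(a_k,b_k)\}$ together with their reflections) to force exactly one zero in $(a_k,b_k)$, which must equal $x''_{\nu,k}$. Substituting into the asymptotic then yields $\sin\bigl(\pi\mathcal G_\nu(x''_{\nu,k})\bigr)+O\bigl((x''_{\nu,k})^{-1}\bigr)=0$, and applying the branch of arcsine that sends values near $k$ to $k$ produces the desired formula. The main obstacle is ensuring Theorem \ref{thm333} supplies the uniqueness statement in exactly the form needed; once Lemma \ref{777} is in hand, everything else is a routine adaptation of the bounded-$\nu$ argument already used for $\mathfrak g_\nu$.
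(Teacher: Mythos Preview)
Your proposal is correct and matches the paper's approach exactly: the paper states that the proof of Proposition \ref{prop1} relies on Theorem \ref{thm333} and is essentially the same as that of Proposition \ref{thm222}, since $\mathfrak{g}_{\nu}$ and $\widetilde{\mathfrak{h}}_{\nu,\delta}$ have formally the same asymptotics by Lemma \ref{777}. Your observation that Theorem \ref{thm333} supplies the uniqueness by comparing zero counts in consecutive circles (giving exactly one positive zero per interval of length $\pi/(R-r)$ for large $s$, once combined with the realness and evenness from Theorem \ref{thm444}) is precisely the right substitute for Cochran's direct statement used in the $\mathfrak g_\nu$ case.
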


As a by-product of the argument, we have the following.
\begin{proposition}\label{prop2}
We have the following bounds for $x''_{\nu,k}$.
\begin{enumerate}
\item If  $\nu\geq|\delta|$, $k\geq 2$ and $\max\{\nu, k\}$ is sufficiently large, then
\begin{equation*}
	x''_{\nu,k}>\frac{1}{R}\sqrt{\nu^2+\pi^2\left(k-\frac{5}{4} \right)^2}.
\end{equation*}

\item	If $\nu\geq|\delta|$ and $\nu$ is sufficiently large, then
\begin{equation*}
	x''_{\nu,k}>\nu/R.
\end{equation*}	

\item If $\nu\geq|\delta|$ and  $k$ is sufficiently large, then
\begin{equation*}
	\frac{\pi(k-1/2)}{R-r}<\mathcal{G}^{-1}_\nu\left(k-\frac{1}{8}\right)<x''_{\nu,k}<\mathcal{G}^{-1}_\nu\left(k+\frac{3}{8}\right)<\frac{\pi(k+1/2)}{R-r}.
\end{equation*}
\end{enumerate}
\end{proposition}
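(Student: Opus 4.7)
My plan is to extract all three statements as by-products of the construction already used in the proofs of Propositions~\ref{estofhx''} and \ref{prop1}: in those proofs, $x''_{\nu,k}$ was localized inside an explicit interval $(a_k,b_k)\subset(\nu/R,\infty)$ on which $\mathcal{G}_\nu$ is a continuous, strictly increasing bijection onto the short image $(k-\tfrac18,k+\tfrac38)$ for $k\ge1$. Part~(3) is then immediate, because for $k$ sufficiently large $(a_k,b_k)$ was further shown to sit inside $\bigl(\pi(k-\tfrac12)/(R-r),\,\pi(k+\tfrac12)/(R-r)\bigr)$; monotonicity of $\mathcal{G}_\nu$ yields the four inequalities. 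Part~(2) is equally immediate, since the intervals $(a_k,b_k)$ are by construction contained in $(\nu/R,\infty)$, so $x''_{\nu,k}>\nu/R$ for every $k$ whenever $\nu$ is large enough for Proposition~\ref{estofhx''} to apply.

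For Part~(1) I would split according to which of $\nu$ or $k$ is large. A key elementary input is the inequality $G(t)\le Rg(t/R)$ for $t\in[0,R]$, which follows from $g\ge0$ on $[0,1]$ (so that the $-rg(t/r)$ contribution in $G$ is nonpositive), and which gives $\mathcal{G}_\nu(x)\le Rx\,g(\nu/(Rx))$ throughout the domain. When $\nu$ is sufficiently large, Part~(2) supplies $Rx''_{\nu,k}>\nu$, so I write $\nu/(Rx''_{\nu,k})=\cos\alpha$ with $\alpha\in(0,\pi/2)$; the definition of $g$ then gives
\begin{equation*}
\mathcal{G}_\nu(x''_{\nu,k})\le\tfrac{1}{\pi}\bigl(\nu\tan\alpha-\nu\alpha\bigr).
\end{equation*}
Combining with the lower bound $\mathcal{G}_\nu(x''_{\nu,k})>k-\tfrac18$ from the construction and discarding the nonnegative term $\nu\alpha$ yields $\nu\tan\alpha>\pi(k-\tfrac18)$; squaring and using $\nu\tan\alpha=\sqrt{(Rx''_{\nu,k})^2-\nu^2}$ then gives $(Rx''_{\nu,k})^2>\nu^2+\pi^2(k-\tfrac18)^2\ge\nu^2+\pi^2(k-\tfrac54)^2$, where the last step uses $k\ge2$.

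When $\nu$ is instead bounded and $k$ is sufficiently large, Part~(3) provides $Rx''_{\nu,k}>R\pi(k-\tfrac12)/(R-r)$, and the bound in Part~(1) reduces to the elementary quadratic inequality
\begin{equation*}
\frac{R^2(k-\tfrac12)^2}{(R-r)^2}-\bigl(k-\tfrac54\bigr)^2>\frac{\nu^2}{\pi^2},
\end{equation*}
whose leading coefficient in $k$ equals $r(2R-r)/(R-r)^2>0$, so it holds for all sufficiently large $k$, uniformly in $\nu$ on bounded sets. Together with the first case this covers the hypothesis ``$\max\{\nu,k\}$ sufficiently large''.

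The main obstacle is essentially organizational rather than technical: one must ensure that the two cases above cover that hypothesis without gaps, by choosing the two thresholds (for ``$\nu$ large'' and for ``$k$ large given $\nu\le V$'') compatibly, and must verify that the borderline value $k=1$ would not force a separate argument (it would not, since $(1-\tfrac18)^2>(1-\tfrac54)^2$, although (1) is stated only for $k\ge2$). Everything else is a direct consequence of the localization intervals already constructed in Propositions~\ref{estofhx''} and \ref{prop1}.
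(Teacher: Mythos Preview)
Your proposal is correct, and Parts~(2) and~(3) are handled exactly as in the paper. For Part~(1), however, you take a genuinely different route. The paper's argument is a one-line comparison: since $x''_{\nu,k}\in(a_k,b_k)$ and $x'_{\nu,k-1}\in(a_{k-1},b_{k-1})$ lie in disjoint intervals (the same $(a_j,b_j)$'s work for both families, as they depend only on $\mathcal{G}_\nu$), one has $x''_{\nu,k}>x'_{\nu,k-1}$, and then Proposition~\ref{case0} applied to $x'_{\nu,k-1}$ gives the bound $\frac{1}{R}\sqrt{\nu^2+\pi^2(k-1-\tfrac14)^2}$ directly. This is short but leans on Proposition~\ref{case0}, which itself imports external ingredients (Cochran's interlacing, McCann's lower bounds for $\mathtt{j}_{\nu,k}$).

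Your argument is more self-contained: from the localization $\mathcal{G}_\nu(x''_{\nu,k})>k-\tfrac18$ together with the elementary pointwise inequality $\mathcal{G}_\nu(x)\le Rx\,g(\nu/(Rx))$ (valid because $g\ge0$ on $[0,1]$), you extract $\sqrt{(Rx''_{\nu,k})^2-\nu^2}>\pi(k-\tfrac18)$ directly when $\nu$ is large, and you handle bounded $\nu$ via Part~(3) and a quadratic comparison. In fact your large-$\nu$ branch yields the sharper bound $\tfrac{1}{R}\sqrt{\nu^2+\pi^2(k-\tfrac18)^2}$, which you then relax to the stated one; this also explains why the restriction $k\ge2$ is not essential in that branch (as you note). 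The trade-off is that the paper's route is shorter given the machinery already in place, while yours avoids the dependence on the external zero bounds for $J_\nu$ and $J'_\nu$.
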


\begin{proof}
We observe that
\begin{equation*}
x''_{\nu,k}>x'_{\nu,k-1}
\end{equation*}
since $x''_{\nu,k}\in(a_k, b_k)$ and $x'_{\nu,k-1}\in (a_{k-1}, b_{k-1})$ where $\mathcal{G}_\nu((a_k, b_k))=(k-1/8, k+3/8)$.  The first inequality follows immediately from Proposition \ref{case0}. The second inequality holds since $x''_{\nu,k}>a_k>a_0>\nu/R$. The third inequality  holds since the interval $(a_k, b_k)$ is a subset of the interval \eqref{thm222-1NC} if $k$ is sufficiently large.
\end{proof}


As a consequence of the six propositions above, we can readily derive the following bounds on the gap between adjacent zeros of $\mathfrak f_{\nu}$, $\mathfrak g_{\nu}$ and  $\mathfrak{h}_{\nu,\delta}$ respectively.

\begin{corollary}\label{cor1}
Given any sufficiently large $\nu$ and any constant $\sigma$ with $0<\sigma<R$, for all $x_{\nu,k}$'s that are greater than $\nu/\sigma$ we have
\begin{equation*}
1 \lesssim x_{\nu,k+1}-x_{\nu,k}\lesssim_{\sigma} 1,
\end{equation*}
and the dependence of the implicit constant on $\sigma$ can be removed if $0<\sigma\leq r$. For any $V\in\mathbb{N}$ if $0\leq \nu\leq V$ and $k$ is sufficiently large then
\begin{equation*}
x_{\nu,k+1}-x_{\nu,k}\asymp 1.
\end{equation*}

The same type of results also hold for $x'_{\nu,k}$ and $x''_{\nu,k}$.
\end{corollary}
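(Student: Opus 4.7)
The plan is to apply the mean value theorem to the strictly increasing function $\mathcal{G}_\nu$ between consecutive zeros, after first combining Propositions \ref{thm111}, \ref{thm222}, \ref{estofhx''} and \ref{prop1} to establish the crucial increment estimate
\begin{equation*}
\mathcal{G}_\nu(x_{\ast,\nu,k+1})-\mathcal{G}_\nu(x_{\ast,\nu,k})=1+o(1),
\end{equation*}
where $x_{\ast,\nu,k}$ abbreviates any of $x_{\nu,k}$, $x'_{\nu,k}$, $x''_{\nu,k}$.

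First I would compute $\mathcal{G}_\nu'$ explicitly. Using $g'(s)=-\arccos(s)/\pi$ a direct calculation yields
\begin{equation*}
\mathcal{G}_\nu'(x)=\tfrac{1}{\pi}\bigl(\sqrt{R^2-\nu^2/x^2}-\sqrt{r^2-\nu^2/x^2}\bigr)\quad\text{for }x\geq\nu/r,
\end{equation*}
and $\mathcal{G}_\nu'(x)=\sqrt{R^2-\nu^2/x^2}/\pi$ for $\nu/R\leq x\leq\nu/r$. Consequently $\mathcal{G}_\nu'(x)\in(0,\sqrt{R^2-r^2}/\pi]$ throughout $[\nu/R,\infty)$. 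When $\sigma\in(0,r]$ the hypothesis $x>\nu/\sigma$ places us in the first formula with $\nu/x\in[0,r]$, so $\mathcal{G}_\nu'(x)\geq(R-r)/\pi$ uniformly in $\sigma$; when $\sigma\in(r,R)$ the second formula can also apply and the best lower bound is only $\sqrt{R^2-\sigma^2}/\pi$, which depends on $\sigma$.

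Next I would derive the increment estimate. Propositions \ref{thm111} and \ref{estofhx''} express $\mathcal{G}_\nu(x_{\ast,\nu,k})=k+c_{\nu,k}+o(1)$ with $c_{\nu,k}\in\{0,\,0,\,\mp\psi_i(z_{\nu,k}),\,\mp1/4\}$ according to which of the four regimes of $rx_{\ast,\nu,k}$ applies. A preliminary bound $x_{\ast,\nu,k+1}-x_{\ast,\nu,k}=O_\sigma(1)$ is available from Propositions \ref{case0} and \ref{prop2} (or directly from the intervals $(a_k,b_k)$ constructed in the proofs of the above propositions). This bound forces $|z_{\nu,k+1}-z_{\nu,k}|=O_\sigma(\nu^{-1/3})$, and since $\psi_1',\psi_2'$ are bounded by Lemma \ref{case2.5}, we conclude $c_{\nu,k+1}-c_{\nu,k}=o(1)$ whenever both zeros lie in the same regime. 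For consecutive zeros straddling adjacent regimes, the limits $\psi_i(+\infty)=0$ and $\psi_i(-\infty)=1/4$ match the constants $0$ and $\mp1/4$ of the flanking cases, so the estimate survives the transitions. For bounded $\nu$ and large $k$ the increment estimate follows directly from Propositions \ref{thm222} and \ref{prop1}.

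The mean value theorem then gives some $\xi$ between the two zeros with $\mathcal{G}_\nu'(\xi)(x_{\ast,\nu,k+1}-x_{\ast,\nu,k})=1+o(1)$, and inserting the bounds on $\mathcal{G}_\nu'$ yields the desired two-sided estimates, with $\sigma$-dependence only in the regime $\sigma\in(r,R)$. For the bounded-$\nu$ statement, Propositions \ref{thm222} and \ref{prop1} force $x_{\ast,\nu,k}\to\infty$ as $k\to\infty$, so $\nu/\xi\to 0$ and $\mathcal{G}_\nu'(\xi)\to(R-r)/\pi$, giving $\sigma$-independent $\asymp 1$ bounds. The main (and mild) obstacle is the matching of the case-dependent constants $c_{\nu,k}$ across regime transitions in the large-$\nu$ setting; this is precisely where the limit identities for $\psi_1$ and $\psi_2$ recorded in Lemma \ref{case2.5} are used, and with them the transitions cause no trouble.
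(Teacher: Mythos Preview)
Your approach is correct and is exactly the argument the paper has in mind when it says the corollary ``can readily [be] derive[d]'' from the six propositions: the paper gives no proof at all, so your mean-value-theorem argument against the explicit derivative of $\mathcal{G}_\nu$ is the intended route. Two small comments: (i) Propositions \ref{case0} and \ref{prop2} only give lower bounds on the zeros and do not by themselves produce the preliminary $O_\sigma(1)$ gap bound; your parenthetical alternative via the disjoint intervals $(a_k,b_k)$ is the right source (and in fact that interval structure already gives the increment $\mathcal{G}_\nu(x_{\ast,\nu,k+1})-\mathcal{G}_\nu(x_{\ast,\nu,k})\in(1/2,3/2)$, so the finer $1+o(1)$ statement and the regime-matching via $\psi_i(\pm\infty)$, while correct, are not strictly needed). (ii) It is worth noting that the hypothesis $x_{\ast,\nu,k}>\nu/\sigma$ forces $k\gtrsim_\sigma\nu$ (since $\mathcal{G}_\nu(\nu/\sigma)=\nu G(\sigma)/\sigma$), which is what makes all the error terms in the fourth regime genuinely $o(1)$.
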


We have the same type of results as those stated in \cite[Corollary 2.11]{GMWW:2019}.

\begin{corollary}\label{cor2}
The error terms in \eqref{thm111-1} are of size
\begin{align*}
&O\left(\frac{1}{\nu+k}\right) \quad \textrm {if $rx_{\nu,k}\ge (1+c)\nu$};\\
&O\left(\nu^{1/2}\left(k-\frac{G(r)}{r}\nu\right)^{-3/2}\right)\quad \textrm {if $\nu+\nu^{1/3+\varepsilon}\le rx_{\nu,k}< (1+c)\nu$};\\
& O\left(\nu^{-2/3+3\varepsilon}\right) \quad \textrm {if $\nu-\nu^{1/3+\varepsilon}< rx_{\nu,k}< \nu+\nu^{1/3+\varepsilon}$};\\
&O\left(\frac{1}{k}\right)\quad \textrm {if $rx_{\nu,k}\le\nu-\nu^{1/3+\varepsilon}$}.
\end{align*}
The error terms in \eqref{thm111-1NC} and \eqref{thm111-1hnuNC} have the same bounds as above except that
\begin{equation*}
E'_{\nu,k}, E''_{\nu,k}=O\left(\frac{1}{k+1}+\nu^{-\frac{2}{3}-\frac{\varepsilon}{2}}\right),\quad k\geq 0.
\end{equation*}
The error terms in \eqref{thm222-2}, \eqref{thm222-2NC} and \eqref{thm222-2hnuNC} are all of size $O\left((\nu+k)^{-1}\right)$.
\end{corollary}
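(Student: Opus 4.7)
The plan is to unify the three propositions \ref{thm111}, \ref{estofhx''}, \ref{thm222} and \ref{prop1} under one case analysis, converting the ``intrinsic'' error bounds (in terms of $x_{\nu,k}$, $z_{\nu,k}$, $\mathcal{G}_\nu(x_{\nu,k})$) into bounds that depend only on the parameters $\nu$ and $k$. The four cases of \eqref{thm111-1} will be handled in parallel; the three versions (Dirichlet, Neumann, and ultraspherical Neumann) share the same geometric analysis, since their asymptotic expansions differ only in lower-order corrections.

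For Case 1 ($rx_{\nu,k}\ge (1+c)\nu$) I first establish $x_{\nu,k}\asymp \nu+k$. The lower bound $x_{\nu,k}\gtrsim \nu$ is immediate from the hypothesis. For $x_{\nu,k}\gtrsim k$ (which is relevant only when $k\ge 1$), I use that $G$ is bounded and bounded away from zero on $[0,r/(1+c)]$, so $\mathcal{G}_\nu(x_{\nu,k})=x_{\nu,k}G(\nu/x_{\nu,k})\asymp x_{\nu,k}$; combined with $\mathcal{G}_\nu(x_{\nu,k})=k+O(x_{\nu,k}^{-1})$ this gives $k\asymp x_{\nu,k}$. Case 3 is already in the desired form. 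For Case 4 ($rx_{\nu,k}\le \nu-\nu^{1/3+\varepsilon}$), the uniform a priori bound $|E_{\nu,k}|<3/8$ forces $\mathcal{G}_\nu(x_{\nu,k})\asymp k$ for $k\ge 1$, converting $O(\mathcal{G}_\nu(x_{\nu,k})^{-1})$ into $O(k^{-1})$; for the Neumann and ultraspherical versions the additional term $\nu^{-2/3-\varepsilon/2}$ is kept verbatim, and the $k=0$ case is absorbed into $1/(k+1)$.

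The main technical step is Case 2, where I need to show $z_{\nu,k}^{-3/2}\lesssim \nu^{1/2}(k-G(r)\nu/r)^{-3/2}$, i.e., $k-G(r)\nu/r\gtrsim \nu^{1/3}z$. Writing $u=\nu/x_{\nu,k}=r/(1+z\nu^{-2/3})$ just below $r$ and Taylor expanding $G(u)=Rg(u/R)-rg(u/r)$ around $u=r$ --- using $g'(t)=-\arccos(t)/\pi$ for the smooth piece $Rg(u/R)$ and the endpoint asymptotic $g(t)\sim\frac{2\sqrt 2}{3\pi}(1-t)^{3/2}$ as $t\uparrow 1$ for the singular piece $rg(u/r)$ --- together with the cancellation $G(r)/r+\arccos(r/R)/\pi=\sqrt{R^2-r^2}/(r\pi)$, yields
\begin{equation*}
\mathcal{G}_\nu(x_{\nu,k})=\frac{G(r)\nu}{r}+\frac{\sqrt{R^2-r^2}}{r\pi}\,z\nu^{1/3}-\frac{2\sqrt 2}{3\pi}\,z^{3/2}+O\!\left(z^{5/2}\nu^{-2/3}\right)+O\!\left(z^2\nu^{-1/3}\right).
\end{equation*}
Since $z\le c\nu^{2/3}$ with $c$ small, each of $z^{3/2}$, $z^{5/2}\nu^{-2/3}$ and $z^2\nu^{-1/3}$ is dominated (with a small constant) by $z\nu^{1/3}$; combined with $\mathcal{G}_\nu(x_{\nu,k})=k+O(z^{-3/2})$ this gives $k-G(r)\nu/r\asymp z\nu^{1/3}$, which is equivalent to the claim.

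Finally, the bounds for \eqref{thm222-2}, \eqref{thm222-2NC} and \eqref{thm222-2hnuNC} follow from the remark after Proposition \ref{thm222} and Proposition \ref{prop2}, which locate the zero in an interval of length $O(1)$ around $k\pi/(R-r)$, so $x\asymp k\asymp \nu+k$ when $\nu$ is bounded. The main obstacle throughout is the Taylor expansion in Case 2, since it requires simultaneously tracking the smooth expansion of $Rg(u/R)$, the $3/2$-power singularity of $rg(u/r)$ at $u=r$, and the error $O(z^{-3/2})$ in Proposition \ref{thm111}, and then verifying that the correct leading term survives uniformly throughout the whole range $\nu^\varepsilon\le z\le c\nu^{2/3}$; elsewhere it is a matter of combining the already-established estimates on $\mathcal{G}_\nu$ with the monotonicity and endpoint behaviour of $G$.
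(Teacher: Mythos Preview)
Your proof is correct and follows essentially the same approach as the paper intends: the paper gives no proof of this corollary, merely referencing \cite[Corollary 2.11]{GMWW:2019}, and your case analysis---together with the Taylor expansion establishing $k-\tfrac{G(r)}{r}\nu\asymp z_{\nu,k}\nu^{1/3}$ in Case~2---is precisely the computation behind that reference (compare Remark~\ref{cor2-3}). Your treatment of the singular $3/2$-power piece of $G$ at $u=r$ and the verification that all correction terms are dominated by the linear term when $c$ is small are accurate.
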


\begin{remark}\label{cor2-3}
The second bound is small when $\nu$ is large, since
\begin{equation*}
	\nu^{-1/3}\left(k-\frac{G(r)}{r}\nu\right)\asymp z_{\nu,k}\geq\nu^{\varepsilon}.
\end{equation*}
\end{remark}

Let us now discuss the approximations of zeros. We already know that
 approximations of zeros $x_{\nu,k}$ and $x'_{\nu,k}$ can be derived directly  from  \cite[9.5.27--9.5.31 on P. 374]{abram:1972}, provided that we allow the implicit constants in the error terms to depend on $\nu$. Here we provide  an analogous one-term asymptotics for the zeros $x''_{\nu,k}$, which is an easy consequence of Propositions \ref{estofhx''} and \ref{prop1} combined with a Taylor expansion of the function $G$ at $0$.

\begin{theorem} \label{thm2.20}
	For any fixed $\nu\geq |\delta|$ and sufficiently large $k$, we have
	\begin{equation*}
		x''_{\nu,k}=\frac{\pi}{R-r}k+O_{\nu}\left(\frac{1}{k} \right).
	\end{equation*}
\end{theorem}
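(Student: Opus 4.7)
The plan is to combine the asymptotic relation $\mathcal{G}_\nu(x''_{\nu,k}) = k + O((x''_{\nu,k})^{-1})$ supplied by Proposition \ref{prop1} with a local expansion of $G$ near $0$. Since $\nu$ is fixed, Propositions \ref{estofhx''} and \ref{prop1} both apply for sufficiently large $k$, and Proposition \ref{prop2}(3) ensures $x''_{\nu,k} \asymp k$, hence $\nu/x''_{\nu,k}$ is small. The argument then reduces to understanding the behaviour of $G(t)$ as $t \to 0^+$.

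First I would expand $g(x) = (\sqrt{1-x^2} - x\arccos x)/\pi$ at $x=0$. A direct computation gives $g'(x) = -\arccos(x)/\pi$ and $g''(x) = 1/(\pi\sqrt{1-x^2})$, so $g(0) = 1/\pi$, $g'(0) = -1/2$, $g''(0) = 1/\pi$, and hence
\begin{equation*}
g(x) = \frac{1}{\pi} - \frac{x}{2} + \frac{x^2}{2\pi} + O(x^4) \quad \text{as } x \to 0.
\end{equation*}
Substituting into $G(t) = R g(t/R) - r g(t/r)$ for $0 \le t \le r$, the constant and linear contributions combine to
\begin{equation*}
G(t) = \frac{R-r}{\pi} - \frac{(R-r)\, t^2}{2\pi R r} + O(t^4),
\end{equation*}
the linear terms cancelling exactly. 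Multiplying by $x$ and inserting $t = \nu/x$ yields, for large $x$,
\begin{equation*}
\mathcal{G}_\nu(x) = x\, G(\nu/x) = \frac{R-r}{\pi}\, x - \frac{(R-r)\,\nu^2}{2\pi R r}\cdot \frac{1}{x} + O_\nu(x^{-3}) = \frac{R-r}{\pi}\, x + O_\nu(x^{-1}).
\end{equation*}

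Now I would apply Proposition \ref{prop1} at $x = x''_{\nu,k}$: since $\nu$ is fixed (so $|\delta|\le \nu \le V$ with $V = \lceil \nu\rceil$), we have
\begin{equation*}
\frac{R-r}{\pi}\, x''_{\nu,k} + O_\nu\!\left((x''_{\nu,k})^{-1}\right) = k + O\!\left((x''_{\nu,k})^{-1}\right),
\end{equation*}
and solving for $x''_{\nu,k}$ gives $x''_{\nu,k} = \tfrac{\pi}{R-r}\, k + O_\nu(1/x''_{\nu,k})$. Finally, Proposition \ref{prop2}(3) provides $x''_{\nu,k} \asymp k$, so the error term is $O_\nu(1/k)$, as claimed.

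There is essentially no genuine obstacle: the proof is mechanical once one notices the crucial cancellation of the linear terms of $Rg(t/R)$ and $rg(t/r)$ in $G(t)$, which is what upgrades the naive $O_\nu(1)$ remainder to $O_\nu(x^{-1})$ in the expansion of $\mathcal{G}_\nu$. The only point requiring mild care is confirming that, for fixed $\nu$ and $k$ large, $x''_{\nu,k}$ lies in the regime $rx''_{\nu,k} \ge (1+c)\nu$ so that the first case of \eqref{thm111-1hnuNC} is the one that applies (equivalently, using Proposition \ref{prop1} directly).
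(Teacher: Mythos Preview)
Your proposal is correct and follows essentially the same approach as the paper, which describes the result as ``an easy consequence of Propositions \ref{estofhx''} and \ref{prop1} combined with a Taylor expansion of the function $G$ at $0$.'' Your explicit computation of the Taylor expansion, including the key cancellation of the linear terms, is exactly what the paper has in mind, and your use of Proposition \ref{prop2}(3) to convert $O_\nu(1/x''_{\nu,k})$ into $O_\nu(1/k)$ is the natural way to close the argument.
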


\begin{remark}
	See Theorem \ref{thm4.12} below for a similar result of the zeros of $j'_{\nu,\delta}$ and also Remark \ref{remark4.13} for a discussion on generalizing this type of one-term asymptotics to McMahon-type asymptotics.
\end{remark}

However, the type of approximations discussed above is insufficient for our applications to the eigenvalue counting problems. It is crucial for us to derive approximations where the error terms include implicit constants that are independent of both $\nu$ and $k$. Indeed, we have obtained such approximations for zeros of $\mathfrak{f}_{\nu}$ and $\mathfrak{h}_{\nu,\delta}$ in the following theorem. This result generalizes \cite[Corollary 2.14]{GMWW:2019}.

Let $F: [0, \infty)\times [0, \infty)\setminus \{O\}\rightarrow \mathbb{R}$ be the function homogeneous of degree $1$ satisfying $F\equiv1$ on the graph of $G$, that is, $F$ is the Minkowski functional of the graph of $G$. Recall that we have computed and estimated two partial derivatives of $F$ in \cite[Lemma 2.13]{GMWW:2019}.

\begin{theorem}\label{approximation}
	There exists a constant $c\in (0,1)$ such that for any $\varepsilon>0$ there exists a positive integer $V$ such that if $\nu>V$ then the positive zeros of $\mathfrak f_\nu$ satisfy
	\begin{equation}\label{approximation1}
		x_{\nu, k}=F(\nu,k-\tau_{\nu,k})+R_{\nu,k},
	\end{equation}
	where
	\begin{equation*}
		\tau_{\nu,k}=\left\{
		\begin{array}{ll}
			0,                         & \textrm{if $rx_{\nu,k}\geq \nu+\nu^{1/3+\varepsilon}$,}\\
			\psi_1\left(z_{\nu,k}\right),  & \textrm{if $\nu-\nu^{1/3+\varepsilon}< rx_{\nu,k}< \nu+\nu^{1/3+\varepsilon}$,}\\
			1/4,                       & \textrm{if $rx_{\nu,k}\leq \nu-\nu^{1/3+\varepsilon}$,}
		\end{array}
		\right. 
	\end{equation*}
and
	\begin{equation*}
		R_{\nu,k}\!=\!\left\{
		\begin{array}{ll}
			\!O\left((\nu+k)^{-1}\right),                                         & \textrm{if $rx_{\nu,k}\geq (1+c)\nu$,}\\
			\!O\left(\nu^{1/2}\left(k-\frac{G(r)}{r}\nu\right)^{-3/2}\right),  & \textrm{if $\nu+\nu^{1/3+\varepsilon}\leq rx_{\nu,k}<(1+c)\nu$,}\\
			\!O\left(\nu^{-2/3+3\varepsilon}\right),                       & \textrm{if $\nu-\nu^{1/3+\varepsilon}< rx_{\nu,k}< \nu+\nu^{1/3+\varepsilon}$,}\\
			\!O\left(\nu^{1/3}k^{-4/3}\right),                                   &\textrm{if $rx_{\nu,k}\leq \nu-\nu^{1/3+\varepsilon}$.}
		\end{array}
		\right.
	\end{equation*}
If $0\leq \nu\leq V$, there exists a positive integer $K$ such that if $k>K$ then \eqref{approximation1} holds with
	\begin{equation*}
		\tau_{\nu,k}=0
	\end{equation*}
	and
	\begin{equation*}
		R_{\nu,k}=O\left((\nu+k)^{-1}\right).
	\end{equation*}
	
	
Similar results hold for $\mathfrak{h}_{\nu,\delta}$. There exists a constant $c\in (0,1)$ such that for any $\varepsilon>0$  there exists an integer $V>|\delta|$ such that if $\nu>V$ then the positive zeros of $\mathfrak{h}_{\nu,\delta}$ satisfy
	\begin{equation}\label{approximation1NC}
		x''_{\nu, k}=F(\nu,k+\widetilde\tau_{\nu,k})+\widetilde R_{\nu,k},
	\end{equation}
	where
	\begin{equation}\label{translationNC}
		\widetilde\tau_{\nu,k}=\left\{
		\begin{array}{ll}
			0,                         & \textrm{if $rx''_{\nu,k}\geq \nu+\nu^{1/3+\varepsilon}$,}\\
			\psi_2\left(z''_{\nu,k}\right),  & \textrm{if $\nu-\nu^{1/3+\varepsilon}< rx''_{\nu,k}< \nu+\nu^{1/3+\varepsilon}$,}\\
			1/4,                       & \textrm{if $rx''_{\nu,k}\leq \nu-\nu^{1/3+\varepsilon}$,}
		\end{array}
		\right.
	\end{equation}	
and
	\begin{equation*}
		\widetilde R_{\nu,k}\!=\!\left\{
		\begin{array}{ll}
			\!O\left((\nu+k)^{-1}\right),                                         & \textrm{if $rx''_{\nu,k}\geq (1+c)\nu$,}\\
			\!O\left(\nu^{1/2}\left(k-\frac{G(r)}{r}\nu\right)^{-3/2}\right),  & \textrm{if $\nu+\nu^{1/3+\varepsilon}\leq rx''_{\nu,k}<(1+c)\nu$,}\\
			\!O\left(\nu^{-2/3+3\varepsilon}\right),                       & \textrm{if $\nu-\nu^{1/3+\varepsilon}< rx''_{\nu,k}< \nu+\nu^{1/3+\varepsilon}$,}\\
			\!O\left(\frac{\nu^{1/3}}{(k+1)^{4/3}}+\frac{\nu^{-1/3-\varepsilon/2}}{(k+1)^{1/3}}\right),                                   &\textrm{if $rx''_{\nu,k}\leq \nu-\nu^{1/3+\varepsilon}$.}
		\end{array}
		\right.
	\end{equation*}
If $|\delta|\leq \nu\leq V$, there exists  a positive integer $K$ such that if $k>K$ then \eqref{approximation1NC} holds with
\begin{equation*}
		\widetilde \tau_{\nu,k}=0
\end{equation*}
	and
	\begin{equation*}
		\widetilde R_{\nu,k}=O\left((\nu+k)^{-1}\right).
	\end{equation*}
\end{theorem}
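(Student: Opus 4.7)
The approach is to derive both \eqref{approximation1} and \eqref{approximation1NC} from the expansions of $\mathcal{G}_\nu(x_{\nu,k})$ and $\mathcal{G}_\nu(x''_{\nu,k})$ already obtained in Propositions \ref{thm111}, \ref{thm222}, \ref{estofhx''} and \ref{prop1} (with error sizes catalogued in Corollary \ref{cor2}), combined with a mean-value-theorem step for the Minkowski functional $F$, in the same spirit as \cite[Corollary 2.14]{GMWW:2019}. The starting identity is
\begin{equation*}
x = F\bigl(\nu,\mathcal{G}_\nu(x)\bigr) \qquad \textrm{for } x>\nu/R,
\end{equation*}
which is immediate from the definition: since $(\nu/x,G(\nu/x))$ lies on the graph of $G$, we have $F(\nu/x,G(\nu/x))=1$, and homogeneity of degree one of $F$ gives $F(\nu,xG(\nu/x))=x$, i.e., $F(\nu,\mathcal{G}_\nu(x))=x$.

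Substituting $x=x_{\nu,k}$ and plugging in $\mathcal{G}_\nu(x_{\nu,k})=k-\tau_{\nu,k}+\mathcal{E}_{\nu,k}$ from Corollary \ref{cor2}, the one-variable mean value theorem in the second slot yields
\begin{equation*}
x_{\nu,k} = F(\nu,k-\tau_{\nu,k}) + \partial_2 F(\nu,\xi_{\nu,k})\,\mathcal{E}_{\nu,k}
\end{equation*}
for some intermediate $\xi_{\nu,k}$, so the remaining task is to bound $\partial_2 F(\nu,\xi_{\nu,k})$ in each of the four $rx_{\nu,k}$-ranges. Here I would invoke the explicit formulas from \cite[Lemma 2.13]{GMWW:2019}, namely
\begin{equation*}
\partial_2 F(\nu,y)=\frac{\pi}{\sqrt{R^2-t^2}-\sqrt{r^2-t^2}}\ \textrm{ if } t\in[0,r],\qquad \partial_2 F(\nu,y)=\frac{\pi}{\sqrt{R^2-t^2}}\ \textrm{ if } t\in[r,R],
\end{equation*}
where $t=\nu/F(\nu,y)$. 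Since $\partial_2 F$ is homogeneous of degree zero, everything reduces to locating $t$. In the three outer regimes ($rx_{\nu,k}>\nu-\nu^{1/3+\varepsilon}$) one checks routinely that $t$ stays bounded away from $R$, so $\partial_2 F\asymp 1$ uniformly and $R_{\nu,k}$ simply inherits the size of $\mathcal{E}_{\nu,k}$ as listed in Corollary \ref{cor2} and Remark \ref{cor2-3}.

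The real obstacle is the innermost regime $rx_{\nu,k}\le\nu-\nu^{1/3+\varepsilon}$, where $t=\nu/x_{\nu,k}$ approaches $R$ and $\partial_2 F=\pi/\sqrt{R^2-t^2}$ diverges; here the bound from Proposition \ref{case0} is too crude. Instead I would exploit $x_{\nu,k}\asymp \nu$ (which follows at once from $\nu/R<x_{\nu,k}\le(\nu-\nu^{1/3+\varepsilon})/r$) together with the cube-root behaviour $g(1-u)\sim(2\sqrt{2}/(3\pi))\,u^{3/2}$ near $u=0$, so that $\mathcal{G}_\nu(x_{\nu,k})=x_{\nu,k}Rg(\nu/(Rx_{\nu,k}))\asymp\nu u^{3/2}$ with $u=1-\nu/(Rx_{\nu,k})$. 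Matching this to $\mathcal{G}_\nu(x_{\nu,k})\asymp k$ gives $u\asymp (k/\nu)^{2/3}$, hence $\sqrt{R^2-t^2}\asymp\sqrt{u}\asymp(k/\nu)^{1/3}$ and $\partial_2 F\asymp(\nu/k)^{1/3}$. Multiplying by $\mathcal{E}_{\nu,k}=O(1/k)$ delivers the claimed $R_{\nu,k}=O(\nu^{1/3}k^{-4/3})$.

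The $\mathfrak{h}_{\nu,\delta}$ case is handled identically, with Propositions \ref{estofhx''} and \ref{prop1} replacing \ref{thm111} and \ref{thm222} and $k-\tau_{\nu,k}$ replaced by $k+\widetilde\tau_{\nu,k}$; the only new feature is the extra summand $\nu^{-2/3-\varepsilon/2}$ in the error of Corollary \ref{cor2}, which in the inner regime is multiplied by $\partial_2 F\asymp(\nu/k)^{1/3}$ to produce the additional $O(\nu^{-1/3-\varepsilon/2}(k+1)^{-1/3})$ term in $\widetilde R_{\nu,k}$. The small-$\nu$ statements reduce to the outer regime alone, where $\partial_2 F\asymp 1$ and the uniform bound $O((\nu+k)^{-1})$ drops out directly. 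Apart from the inner-regime $\partial_2 F$ estimate, the argument is pure bookkeeping.
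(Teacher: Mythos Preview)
Your proposal is correct and follows essentially the same route as the paper: both start from the identity $x=F(\nu,\mathcal{G}_\nu(x))$, apply the mean value theorem in the second variable, and control $\partial_2 F$ via \cite[Lemma 2.13]{GMWW:2019} in the outer regimes (where it is $\asymp 1$) and in the inner regime (where it is $\lesssim \nu^{1/3}(k+1)^{-1/3}$). The only cosmetic difference is that you re-derive the inner-regime bound on $\partial_2 F$ from the cube-root behaviour of $g$, whereas the paper simply cites \cite[Lemma 2.13]{GMWW:2019} for the estimate $\partial_y F(\nu,\theta)\lesssim \nu^{1/3}\theta^{-1/3}$.
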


\begin{remark}
We define $\tau_{\nu,k}=\widetilde\tau_{\nu,k}=1/4$ when $\nu\leq V$ and $k\leq K$. This will be used in the next section.
\end{remark}

\begin{proof}[Proof of Theorem \ref{approximation}]
If $\nu$ is sufficiently large, then $\nu/x''_{\nu, k}<R$ and
\begin{equation*}
x''_{\nu, k}=F\left(\nu, \mathcal{G}_{\nu}(x''_{\nu,k})\right).
\end{equation*}

If $rx''_{\nu,k}>\nu-\nu^{1/3+\varepsilon}$ then $x''_{\nu,k}>2\nu/(R+r)$ for sufficiently large $\nu$. By using the asymptotics in \eqref{thm111-1hnuNC} and the monotonicity of $\mathcal{G}_{\nu}$, we have
\begin{equation*}
	\frac{k+\widetilde\tau_{\nu,k}}{\nu}\geq \frac{\mathcal{G}_{\nu}(x''_{\nu,k})}{2\nu}\geq \frac{1}{R+r}G\left(\frac{R+r}{2}\right),
\end{equation*}
which, by \cite[Lemma 2.13]{GMWW:2019}, ensures that $\partial_y F(\nu, \theta)\asymp 1$ with $\theta$ between $k+\widetilde\tau_{\nu,k}$ and $\mathcal{G}_{\nu}(x''_{\nu,k})$. Then \eqref{approximation1NC} follows from the mean value theorem, \eqref{thm111-1hnuNC} and Corollary \ref{cor2}.

If $rx''_{\nu,k}\leq \nu-\nu^{1/3+\varepsilon}$ then $x''_{\nu,k}<\nu/r$. In this case we can obtain \eqref{approximation1NC} similarly if we notice that
\begin{equation*}
	\frac{k+\widetilde\tau_{\nu,k}}{\nu}\asymp \frac{\mathcal{G}_{\nu}(x''_{\nu,k})}{\nu}\leq \frac{G(r)}{r}
\end{equation*}
implies, by \cite[Lemma 2.13]{GMWW:2019} no matter whether $\theta/\nu$ is small or not, that
\begin{equation*}
\partial_y F(\nu, \theta)\lesssim \nu^{1/3}\theta^{-1/3}\lesssim \nu^{1/3}(k+1)^{-1/3}
\end{equation*}
with $\theta$ between $k+\widetilde\tau_{\nu,k}$ and $\mathcal{G}_{\nu}(x''_{\nu,k})$.

The case $|\delta|\leq \nu\leq V$ follows easily from the mean value theorem, Proposition \ref{prop1}, Corollary \ref{cor2} and \cite[Lemma 2.13]{GMWW:2019}.

The proof of \eqref{approximation1} is similar.
\end{proof}


\section{Properties of zeros of the cross-product \texorpdfstring{$\widetilde{\mathfrak{h}}_{\nu,\delta}$}{}} \label{sec3}

In this independent section we prove analogous results of Cochran \cite{cochran:1964} for the function $\widetilde{\mathfrak{h}}_{\nu,\delta}$. Recall that $\widetilde{\mathfrak{h}}_{\nu,\delta}(x)$ is defined by \eqref{444} originally for positive argument $x$. However, in view of its expression \eqref{555}, it is natural to extend its domain to the complex plane $\mathbb{C}$.

\begin{theorem} \label{thm333}
	There exists a large constant $C$ such that
	for any $\nu\ge0$ if $s\in\mathbb{N}$ satisfies $s>C(\nu^3+1)$,  then  within the circle  $|z|=(s+1/2)\pi/(R-r)$ the function $\widetilde{\mathfrak{h}}_{\nu,\delta}(z)$
	has $2s+2$ or $2s$ zeros  according to $\nu\neq |\delta|$ or $\nu=|\delta|$.
\end{theorem}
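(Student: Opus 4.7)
The plan is to mimic Cochran's argument-principle strategy for $\mathfrak{g}_{\nu}$, leveraging the decomposition $\widetilde{\mathfrak{h}}_{\nu,\delta}=\mathfrak{g}_{\nu}+\mathscr{E}_{\nu,\delta}$ in \eqref{555} to transfer his zero count to $\widetilde{\mathfrak{h}}_{\nu,\delta}$ via a Rouch\'e-type comparison on the circle $\Gamma_s:=\{|z|=\rho_s\}$ with $\rho_s=(s+1/2)\pi/(R-r)$, supplemented by a careful bookkeeping of the pole order at the origin that accounts for the $\nu=|\delta|$ discrepancy of $2$.

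First I would establish a uniform lower bound on $|\mathfrak{g}_{\nu}(z)|$ together with a strictly smaller uniform upper bound on $|\mathscr{E}_{\nu,\delta}(z)|$ along $\Gamma_s$. To do so I would extend the real-variable asymptotics of Lemmas \ref{case111}--\ref{case3} (Hankel in the oscillatory regime, Olver's uniform expansion in the transition regime) into the complex plane. The hypothesis $s>C(\nu^3+1)$ forces $\rho_s\gg\nu^3$, so every point on $\Gamma_s$ lies comfortably inside the large-argument range where $\mathfrak{g}_\nu(z)$ reduces, up to a mild power-type prefactor, to a sine-type expression of argument $\pi\mathcal{G}_\nu(z)$. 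The choice of the half-integer shift in $\rho_s$ puts the real points of $\Gamma_s$ exactly midway between consecutive real zeros of this sine, while off the real axis the factor $\mathrm{e}^{(R-r)|\mathrm{Im}\,z|}$ arising from the sine keeps the modulus bounded below. Each of the three terms in $\mathscr{E}_{\nu,\delta}$ is a cross-product of the same Bessel shape as $\mathfrak{g}_\nu$ or $\mathfrak{f}_\nu$ times an additional factor of $\delta z^{-1}$ or $\delta^2 z^{-2}$, hence $|\mathscr{E}_{\nu,\delta}(z)/\mathfrak{g}_\nu(z)|=O_\delta(|z|^{-1})=O_\delta(s^{-1})$ uniformly on $\Gamma_s$, which is $<1$ provided $C$ is large enough.

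With this comparison in hand, the meromorphic version of Rouch\'e's theorem yields $Z(\widetilde{\mathfrak{h}}_{\nu,\delta})-P(\widetilde{\mathfrak{h}}_{\nu,\delta})=Z(\mathfrak{g}_\nu)-P(\mathfrak{g}_\nu)$, where $Z$ and $P$ count zeros and poles inside $\Gamma_s$ with multiplicity. Cochran's theorem \cite{cochran:1964} supplies $Z(\mathfrak{g}_\nu)=2s+2$, and both $\mathfrak{g}_\nu$ and $\widetilde{\mathfrak{h}}_{\nu,\delta}$ are analytic away from $z=0$, so only the origin needs to be compared. Using $J_\nu(z)\sim (z/2)^\nu/\Gamma(\nu+1)$ and $Y_\nu(z)\sim -\Gamma(\nu)(2/z)^\nu/\pi$ (with the $\nu=0$ case treated via its logarithm) pins down the pole order of $\mathfrak{g}_\nu$ at $0$, and the same expansion carried out for $\widetilde{\mathfrak{h}}_{\nu,\delta}(z)=(Rr)^\delta z^{2\delta}\mathfrak{h}_{\nu,\delta}(z)$ reveals a leading coefficient proportional to $\nu^2-\delta^2$. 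For $\nu>|\delta|$ this factor is nonzero, $P(\widetilde{\mathfrak{h}}_{\nu,\delta})=P(\mathfrak{g}_\nu)$, and we obtain the count $2s+2$. For $\nu=|\delta|$ the leading term cancels; moreover $j_{\nu,\delta}(z)=z^{-\delta}J_\nu(z)$ is then an even entire function nonvanishing at $0$, so $j'_{\nu,\delta}$ has a simple zero there. Each of the two factors $j'_{\nu,\delta}(Rz)$ and $j'_{\nu,\delta}(rz)$ in $\mathfrak{h}_{\nu,\delta}$ therefore gains one unit of vanishing at $z=0$, the pole order of $\widetilde{\mathfrak{h}}_{\nu,\delta}$ drops by $2$ relative to $\mathfrak{g}_\nu$, and hence $Z(\widetilde{\mathfrak{h}}_{\nu,\delta})=2s$.

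The main obstacle I anticipate is the uniform-in-$\nu$ extension of the Section~\ref{zeros} asymptotics from the real axis to the complex circle $\Gamma_s$: the real-variable lemmas give the correct leading shape, but their error terms have to be justified in sectors where Debye's and Olver's uniform expansions must be pieced together across Stokes lines. A secondary subtlety is the pole bookkeeping in the critical case $\nu=|\delta|$, where one must carry the power series $zJ'_\nu(z)-\nu J_\nu(z)$ one term further (its leading term being $-z^{\nu+2}/(2^{\nu+1}\Gamma(\nu+2))$) in order to confirm that the order of vanishing of $\mathfrak{h}_{\nu,\delta}$ at $z=0$ shifts by exactly $2$ and not by some other amount.
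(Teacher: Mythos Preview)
Your Rouch\'e strategy is sound and gives a genuine alternative to the paper's proof, which proceeds instead by a \emph{direct} argument-principle computation for $\widetilde{\mathfrak{h}}_{\nu,\delta}$ itself: the paper writes down the Hankel large-$|z|$ expansion of $\widetilde{\mathfrak{h}}_{\nu,\delta}(z)$ (valid on all of $\Gamma_s$ once $s\gg\nu^3$), differentiates, forms the logarithmic derivative explicitly as a cotangent plus lower-order terms, and evaluates $\tfrac{1}{2\pi i}\oint_{\Gamma_s}\widetilde{\mathfrak{h}}'_{\nu,\delta}/\widetilde{\mathfrak{h}}_{\nu,\delta}\,dz=2s+O((\nu^2+1)/s)+O((\nu^8+1)/s^3)$ by residues. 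The pole analysis at $z=0$ (via the Laurent series with leading coefficient proportional to $\nu^2-\delta^2$) is then the only place where the $\nu=|\delta|$ dichotomy enters, exactly as in your sketch.

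Your route trades the explicit integral computation for a comparison $|\mathscr{E}_{\nu,\delta}|<|\mathfrak{g}_\nu|$ on $\Gamma_s$ together with Cochran's count for $\mathfrak{g}_\nu$. This is cleaner modularly, but note two points. First, Cochran's published statement gives the count for each fixed $\nu$ and $s$ sufficiently large; to obtain the \emph{uniform} threshold $s>C(\nu^3+1)$ you must track the $\nu$-dependence through his argument-principle computation, which is essentially the same Hankel calculation the paper does for $\widetilde{\mathfrak{h}}_{\nu,\delta}$---so the saving is organizational rather than analytic. Second, since $\rho_s\gg\nu^3\gg\nu$, every point of $\Gamma_s$ lies deep in the large-argument regime; only Hankel's expansion (Lemma~\ref{app-1} type) is needed, and the Olver transition-regime expansions you mention are never invoked---this removes the ``main obstacle'' you anticipate. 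A small correction: your blanket claim $Z(\mathfrak{g}_\nu)=2s+2$ holds only for $\nu>0$; when $\nu=0$ one has $\mathfrak{g}_0=\mathfrak{f}_1$, which is entire with $2s$ zeros, but then necessarily $\delta=0$ and the final answer is unaffected since what Rouch\'e actually transfers is $Z-P=2s$ in all cases.
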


\begin{proof}
We follow Cochran's strategy in \cite{cochran:1964} to prove this theorem. We first observe that: if $\nu\neq |\delta|$ then $\widetilde{\mathfrak{h}}_{\nu,\delta}$  is analytic in $\mathbb{C}\setminus\{0\}$ with a pole at $0$ of the second order;  if $\nu=|\delta|$ then it is an entire function with $\widetilde{\mathfrak{h}}_{\nu,\delta}(0)\neq 0$. Indeed, the regularity of the function $\widetilde{\mathfrak{h}}_{\nu,\delta}$ in $\mathbb{C}\setminus\{0\}$  follows from the known regularity of the four cross-products of Bessel functions appearing in its definition (see \cite[P. 580]{cochran:1964} and \cite[P. 699]{Horsley}). Its behaviour about the origin is clear  if we write it into a Laurent series. For instance, for all non-integer $\nu\geq 0$ we have
 \begin{align*}
  \widetilde{\mathfrak{h}}_{\nu,\delta}(z)=\frac{\left(\nu^2-\delta^2\right)\left(R^{2\nu}-r^{2\nu}\right)}{\nu\pi r^{\nu+1}R^{\nu+1}}\frac{1}{z^2}
     +C_{\widetilde{\mathfrak{h}}_{\nu,\delta}}+\cdots
  \end{align*}
  with a constant term
  \begin{align*}
    C_{\widetilde{\mathfrak{h}}_{\nu,\delta}}= &\frac{\left(\nu^2-2\nu-\delta^2+2\delta\right)\left(R^{2\nu-2}-r^{2\nu-2}\right)}{4\nu(\nu-1)\pi r^{\nu-1}R^{\nu-1}}\\
    &+\frac{\left(-\nu^2-2\nu+\delta^2-2\delta\right)\left(R^{2\nu+2}-r^{2\nu+2}\right)}{4\nu(\nu+1)\pi r^{\nu+1}R^{\nu+1}},
  \end{align*}
by using the ascending series 9.1.10 and relations 9.1.2 and 9.1.27 in \cite[P. 358--361]{abram:1972}. The proofs for all integer $\nu\geq 0$ are similar, except that in addition to 9.1.10 and 9.1.27, we also need to use 9.1.5 and 9.1.11--9.1.13 in \cite{abram:1972}.

We next derive an asymptotics of $\widetilde{\mathfrak{h}}'_{\nu,\delta}/\widetilde{\mathfrak{h}}_{\nu,\delta}$ on the circle $\sigma:|z|=(s+1/2)\pi/(R-r)$ and apply  the argument principle to $\widetilde{\mathfrak{h}}_{\nu,\delta}$ to count its zeros in this circle. Let $s\in\mathbb{N}$ satisfy $s>C(\nu^3+1)$ with $C>0$ to be determined below. We apply Hankel's expansions (13.01) and (13.04) in \cite[P. 266--267]{olver:1997} (with $n=4$) to $\widetilde{\mathfrak{h}}_{\nu,\delta}(z)$ for $-\pi/2\le \arg z\le \pi/2$. In this process we also use relations between $J_{\nu}$, $Y_{\nu}$ and the Hankel functions and their recurrence relations. We then get an asymptotics of $\widetilde{\mathfrak{h}}_{\nu,\delta}$ as follows
\begin{align*}
  \widetilde{\mathfrak{h}}_{\nu,\delta}(z)=&\frac{-2}{\sqrt{Rr}\pi z}\Bigg(\sin((R-r)z)\left(1+\frac{\alpha_2}{z^2}+O\left(\frac{\mu^4+1}{|z|^4}\right)\right)\\
   & -\cos((R-r)z)\left(\frac{\alpha_1}{z}+\frac{\alpha_3}{z^3}+O\left(\frac{\mu^4+1}{|z|^4}\right)\right)\Bigg)
\end{align*}
with $\mu=4\nu^2$ and coefficients $\alpha_l=\alpha_l(\delta,R,r,\mu)=O(\mu^l+1)$ for $l=1,2,3$. By  the evenness of $\widetilde{\mathfrak{h}}_{\nu,\delta}$ the asymptotics of  $\widetilde{\mathfrak{h}}_{\nu,\delta}$ above holds  for all large $|z|$.

A straightforward calculation then shows that, for $z\in\sigma$,
\begin{align*}
\begin{split}
\frac{\widetilde{\mathfrak{h}}'_{\nu,\delta}(z)}{\widetilde{\mathfrak{h}}_{\nu,\delta}(z)}=&(R-r)\Bigg(\cot((R-r)z)+\frac{\alpha_1}{z}\left(\cot^2((R-r)z)+1\right)\\
&+\frac{\alpha_1}{z^2}\left(\left(\frac{1}{R-r}+\alpha_1\right)\cot((R-r)z)+\alpha_1\cot^3((R-r)z)\right)\\
&+\!\frac{1}{z^3}\!\left(\!\left(\!\alpha_3\!-\!\alpha_1\alpha_2\!+\!\alpha_1^3\!+\!\frac{\alpha_1^2}{R-r}\!\right)\!\cot^2((R-r)z)\!+\!\alpha_1^3\cot^4((R-r)z)\!\right)\\
    &+\frac{1}{z^3}\left(\alpha_3-\alpha_1\alpha_2-\frac{2\alpha_2}{R-r}\right)\Bigg)-\frac{1}{z}+O\left(\frac{\mu^4+1}{|z|^4}\right),
\end{split}
\end{align*}
where we have used the fact that $\cot((R-r)z)$ is bounded above and $\sin((R-r)z)$ is bounded away from zero on $\sigma$. We integrate this equality over the contour $\sigma$ and use  the residue theorem to evaluate the right hand side. For example, the first term  $ \cot((R-r)z)$ has poles at $z=m\pi/(R-r)$ of order $1$ with $m=0,\pm1,\pm2,\ldots,\pm s$ within the contour. Thus
\begin{equation*}
\frac{1}{2\pi i}\int_{\sigma}\cot((R-r)z)\,\mathrm dz
=\sum_{m=-s}^s\frac{1}{R-r} =\frac{2s+1}{R-r}.
\end{equation*}
Other parts can be computed similarly. To sum up, we obtain
\begin{equation*}
  \begin{split}
 \frac{1}{2\pi i}\int_{\sigma}\frac{ \widetilde{\mathfrak{h}}'_{\nu,\delta}(z)}{ \widetilde{\mathfrak{h}}_{\nu,\delta}(z)}\,\mathrm dz
   =2s+O\left(\frac{\mu+1}{s}\right)+O\left(\frac{\mu^4+1}{s^3}\right).
\end{split}
\end{equation*}
By the argument principle, the left hand side is equal to the number of zeros of $\widetilde{\mathfrak{h}}_{\nu,\delta}$ in $\sigma$ minus the number of its poles in $\sigma$. If $C$ is sufficiently large, then the right hand side has to be equal to $2s$. Recalling that $\widetilde{\mathfrak{h}}_{\nu,\delta}$ has only one pole at $0$ of the second order if and only if $\nu\neq|\delta|$, we get the desired result of the number of zeros.
\end{proof}

\begin{remark}
As a consequence of Theorem \ref{thm333} and the evenness of $\widetilde{\mathfrak{h}}_{\nu,\delta}(z)$, by using the argument in the proof of Proposition \ref{thm111}, one can show for sufficiently large  $\nu$  that zeros of $\widetilde{\mathfrak{h}}_{\nu,\delta}$ are all real and simple, and that positive zeros are all greater than $\nu/R$.  One cannot show for relatively small $\nu$ the same properties by a similar argument, since the asymptotics \eqref{case111-1NC} provides little information for  small $x$.

However, by using Theorem \ref{thm333} and the study on ultraspherical Bessel functions in Filonov, Levitin, Polterovich and Sher \cite{FLPS:2024}, we manage to show, even for small $\nu$, that $\widetilde{\mathfrak{h}}_{\nu,\delta}(z)$ has only real and simple zeros .
\end{remark}

\begin{theorem} \label{thm444}
For all $\delta\in\mathbb{R}$ and $\nu\geq |\delta|$, zeros of $\widetilde{\mathfrak{h}}_{\nu,\delta}(z)$ are all real and simple.
\end{theorem}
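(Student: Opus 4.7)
The plan is to identify the positive real zeros of $\widetilde{\mathfrak h}_{\nu,\delta}(z)$ with the square roots of Neumann-type eigenvalues of a regular self-adjoint Sturm--Liouville problem on $[r,R]$, and then to match the real-zero count against the total zero count from Theorem~\ref{thm333} in order to rule out non-real zeros. The ODE satisfied by $j_{\nu,\delta}(kx)$ and $y_{\nu,\delta}(kx)$ as functions of $x$ can be written in self-adjoint form, so I would consider the eigenvalue problem
\begin{equation*}
L u := -\frac{1}{x^{2\delta+1}}\frac{d}{dx}\!\left(x^{2\delta+1}\frac{du}{dx}\right) + \frac{\nu^2-\delta^2}{x^2}\,u = k^2 u
\end{equation*}
on the weighted Hilbert space $L^2((r,R),\,x^{2\delta+1}\,dx)$ with Neumann boundary conditions $u'(r)=u'(R)=0$. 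Since $[r,R]$ is compact and bounded away from $0$ and $\nu\geq|\delta|$ makes the potential nonnegative, this is a regular self-adjoint nonnegative Sturm--Liouville problem, whose spectrum is a simple sequence $0\leq\lambda_0<\lambda_1<\cdots\to\infty$, with $\lambda_0=0$ exactly when $\nu=|\delta|$. A value $\lambda>0$ is an eigenvalue if and only if some nontrivial linear combination $a\,j_{\nu,\delta}(\sqrt\lambda\,x)+b\,y_{\nu,\delta}(\sqrt\lambda\,x)$ satisfies both Neumann conditions, which is precisely $\widetilde{\mathfrak h}_{\nu,\delta}(\sqrt\lambda)=0$. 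Hence the positive real zeros of $\widetilde{\mathfrak h}_{\nu,\delta}$ are in bijection with the positive eigenvalues of $L$.

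Next I would upgrade simplicity of each eigenvalue to simplicity of the corresponding zero of the holomorphic function $\widetilde{\mathfrak h}_{\nu,\delta}$ via a Lommel-type identity. Differentiating the Bessel ODE in the spectral parameter $k$ and pairing with the corresponding eigenfunction $u_{k_0}$ yields an identity of the schematic form
\begin{equation*}
\partial_k \widetilde{\mathfrak h}_{\nu,\delta}(k)\big|_{k=k_0}
\;=\; c\,k_0 \int_r^R u_{k_0}(x)^2\, x^{2\delta+1}\,dx \;\neq\; 0,
\end{equation*}
with a nonzero constant $c$ determined by the Wronskian of $j_{\nu,\delta}$ and $y_{\nu,\delta}$; this is the direct analogue of Cochran's \cite{cochran:1964} simplicity argument for $\mathfrak f_\nu$ and $\mathfrak g_\nu$. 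Combined with the evenness $\widetilde{\mathfrak h}_{\nu,\delta}(-z)=\widetilde{\mathfrak h}_{\nu,\delta}(z)$ (already invoked in the proof of Theorem~\ref{thm333} and evident from the expansion of the cross-products in \eqref{555} at the origin), this produces a symmetric family of real, simple zeros on the real axis.

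Finally I would count. Fix $s\in\mathbb{N}$ large and set $M=(s+\tfrac12)\pi/(R-r)$. The classical one-dimensional Weyl law for the regular Sturm--Liouville operator $L$ on $[r,R]$ with Neumann conditions gives $\sqrt{\lambda_n}=n\pi/(R-r)+o(1)$ as $n\to\infty$ (note $\int_r^R \sqrt{w/p}\,dx = R-r$ in our weighted setting), and consequently the number of positive eigenvalues $\lambda<M^2$ equals exactly $s+1$ when $\nu>|\delta|$ and exactly $s$ when $\nu=|\delta|$, once $s$ is sufficiently large. Doubling by evenness, the number of real zeros of $\widetilde{\mathfrak h}_{\nu,\delta}$ in $\{z\in\mathbb{C}:|z|<M\}$ equals $2s+2$ or $2s$ respectively, matching the total zero count in the disk from Theorem~\ref{thm333} exactly. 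Hence no non-real zero can lie in $|z|<M$, and letting $s\to\infty$ concludes the proof. The main obstacle is the precise eigenvalue count in this final step: for $\nu$ not large, the asymptotics of Section~\ref{zeros} are not uniform enough to guarantee that no ``spurious'' small positive eigenvalue of $L$ slips outside the intervals used to localise the $\sqrt{\lambda_n}$. This is exactly where I would invoke the uniform zero enclosures for $j'_{\nu,\delta}$ and $y'_{\nu,\delta}$ obtained by Filonov--Levitin--Polterovich--Sher \cite{FLPS:2024}, which localise each eigenvalue $\sqrt{\lambda_n}$ in a prescribed interval uniformly in all regimes of $\nu$ and $n$, thereby pinning the count down in every range.
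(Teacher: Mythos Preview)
Your approach is correct and shares the paper's core strategy---produce at least $s+1$ (resp.\ $s$) distinct positive real zeros in $(0,(s+\tfrac12)\pi/(R-r))$ and match against the total count from Theorem~\ref{thm333}---but the mechanism for producing and counting these zeros differs. The paper works directly with the modulus--phase representation of $j'_{\nu,\delta}+iy'_{\nu,\delta}$ from \cite{FLPS:2024}: writing $\widetilde{\mathfrak h}_{\nu,\delta}(x)$ as a nonvanishing factor times $\sin(\psi_{\nu,\delta}(Rx)-\psi_{\nu,\delta}(rx))$, it uses \cite[Lemma~3.1]{FLPS:2024} for the sign of the phase difference near $0$ and \cite[Lemma~3.2]{FLPS:2024} for its value at $x=(s+\tfrac12)\pi/(R-r)$, obtaining the required number of crossings of $\pi\mathbb Z$ by the intermediate value theorem. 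Your route via the Neumann Sturm--Liouville problem is the operator-theoretic counterpart of this phase argument (the phase is essentially a Pr\"ufer angle); it has the advantage of making the self-adjoint origin of reality and simplicity transparent, at the cost of invoking the classical one-dimensional eigenvalue asymptotic $\sqrt{\lambda_n}=n\pi/(R-r)+o(1)$, which is indeed available since the problem is regular on $[r,R]$ with $\int_r^R\sqrt{w/p}\,dx=R-r$.

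Two minor remarks. First, the Lommel-type identity is unnecessary: once the Sturm--Liouville eigenvalues are known to be simple you already have at least $2s+2$ (resp.\ $2s$) \emph{distinct} real zeros in the disk, and Theorem~\ref{thm333} counts zeros with multiplicity, so simplicity of the holomorphic zeros follows from the exact match. Second, your closing worry about uniformity in $\nu$ is misplaced: the statement is for each fixed $\nu$, so it suffices that for every such $\nu$ there exist sufficiently large $s$ making the count exact---no uniformity is required, and hence no appeal to the zero enclosures of \cite{FLPS:2024} beyond what the standard regular Sturm--Liouville asymptotic already gives.
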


\begin{proof}
As treated in \cite[\S 3.2]{FLPS:2024}, we write
\begin{equation*}
j_{\nu}'(x)+iy_{\nu}'(x)=L_{\nu,\delta}(x)\left(\cos (\psi_{\nu,\delta}(x))+i\sin (\psi_{\nu,\delta}(x))\right), \textrm{ $x>0$},
\end{equation*}
with a positive modulus function $L_{\nu,\delta}$ and a continuous real phase function $\psi_{\nu,\delta}$ with the initial condition
\begin{equation*}
\lim_{x\rightarrow 0+}\psi_{\nu,\delta}(x)=\frac{\pi}{2}.
\end{equation*}
Therefore,
\begin{equation*}
\widetilde{\mathfrak{h}}_{\nu,\delta}(x)=-(Rr)^{\delta}x^{2\delta}L_{\nu,\delta}(Rx)L_{\nu,\delta}(rx)\sin\left(\psi_{\nu,\delta}(Rx)-\psi_{\nu,\delta}(rx)\right),\textrm{ $x>0$},
\end{equation*}
with a continuous new phase function $\psi_{\nu,\delta}(Rx)-\psi_{\nu,\delta}(rx)$ such that
\begin{equation*}
	\lim_{x\rightarrow 0+}\left(\psi_{\nu,\delta}(Rx)-\psi_{\nu,\delta}(rx)\right)=0.
\end{equation*}
By \cite[Lemma 3.2]{FLPS:2024},
\begin{equation*}
\psi_{\nu,\delta}(Rx)-\psi_{\nu,\delta}(rx)=\left(s+\frac{1}{2}\right)\pi+O\left(s^{-1}\right) \textrm{ with $x=\frac{(s+1/2)\pi}{R-r}$}.
\end{equation*}
By \cite[Lemma 3.1]{FLPS:2024}, when $\nu=|\delta|$ the phase function is always positive on the positive real line and thus its image of the interval $(0, (s+1/2)\pi/(R-r))$ for any large integer $s$ must contain points $\pi,2\pi,\ldots, s\pi$. When $\nu>|\delta|$, however,  the phase function is negative near $0$ and hence its image contains $0,\pi,2\pi,\ldots, s\pi$. This means that $\widetilde{\mathfrak{h}}_{\nu,\delta}$ has  in the interval  $(0, (s+1/2)\pi/(R-r))$ at least $s$ distinct zeros if $\nu=|\delta|$ and $s+1$ distinct zeros if $\nu>|\delta|$. Note that $\widetilde{\mathfrak{h}}_{\nu,\delta}(z)$ is an even function. The desired result then follows from Theorem \ref{thm333}.
\end{proof}

\begin{remark}\label{rm111}
When $\nu<|\delta|$ we still do not know how to prove that  $\widetilde{\mathfrak{h}}_{\nu,\delta}(z)$ has only real and simple zeros. The above argument shows that $\widetilde{\mathfrak{h}}_{\nu,\delta}(z)$ has at least $2s$ distinct real zeros within the circle $|z|=(s+1/2)\pi/(R-r)$, however, there are $2s+2$ zeros inside by Theorem \ref{thm333}. So two zeros are undetermined.  On the other hand, in applications we will take $\delta=d/2-1$ and $\nu=n+d/2-1$ with integers $d\geq 2$ and $n\geq 0$,  which do satisfy $\nu\geq|\delta|$.

These are reasons why we restrict our study of $\mathfrak{h}_{\nu,\delta}$ and $\widetilde{\mathfrak{h}}_{\nu,\delta}$ only to the case $\nu\geq |\delta|$.
\end{remark}


\section{From spectrum counting to lattice counting}\label{reduction-sec}

In Subsection \ref{subsec4.1} we deal with the shell case. We first transfer problems of counting spectrum into  certain problems of counting lattice points with various translations; we next transform the latter problems into problems of counting lattice points with uniform translations.

In Subsection \ref{subsec4.2} we present analogous results for the ball case, which are considerably easier to obtain.

\subsection{The shell case}\label{subsec4.1}

One can check by the standard separation of variables that for $d\geq 2$ the spectrum of the Dirichlet Laplacian associated with the shell $\mathbb{S}$ consists of the numbers $\omega_{n,k}^2$, $n\in\mathbb{Z}_+$, $k\in\mathbb{N}$, where $\omega_{n,k}$'s are positive zeros of the function $\mathfrak f_\nu(x)$ with
\begin{equation*}
\nu=n+\frac{d}{2}-1
\end{equation*}
\textbf{which we denote for short throughout Sections \ref{reduction-sec} and \ref{sec5}}. In other words
\begin{equation}
\omega_{n,k}=x_{\nu,k}.\label{s4-1}
\end{equation}
For each pair $(n,k)$ the number $\omega_{n,k}^2$ appears $m_n^d$ times in the spectrum, where the number $m_n^d$ is defined by
\begin{align*}
m_n^d:
=\left\{\begin{array}{cc}
              1 & \text{if $n=0$}, \\
              d & \text{if $n=1$},\\
              \binom{n+d-1}{d-1}-\binom{n+d-3}{d-1} &\text{if $n\ge 2$},
            \end{array}
\right.
\end{align*}
$m_{-1}^d:=0$ and we follow the convention that $\binom{l}{m}=0$ if $m>l$.
See Gurarie~\cite[\S4.5]{Gur:1992}.

In the Neumann case, the corresponding spectrum consists of the squares of positive zeros of $\mathfrak{h}_{\nu,\delta}$ with
\begin{equation*}
\delta=\frac{d}{2}-1,
\end{equation*}
that is,  it consists of the numbers $\widetilde \omega_{n,k}^2$, $n,k\in \mathbb{Z}_+$, where
\begin{equation}
	\widetilde \omega_{n,k}=x''_{\nu,k}\label{s4-2}
\end{equation}
with an additional definition $\widetilde \omega_{0,0}:=0$. For each pair $(n,k)$ the number $\widetilde \omega_{n,k}^2$ appears $m_n^d$ times in the spectrum.

We will apply to \eqref{s4-1} and \eqref{s4-2} Theorem \ref{approximation} which roughly tells us that $\omega_{n,k}$ and $\widetilde\omega_{n,k}$ correspond to points $(\nu, k-\tau_{\nu,k})$ and $(\nu, k+\widetilde\tau_{\nu,k})$ in $\mu\Omega$ respectively, where $\Omega$ denotes the closed domain bounded by the graph of $G$ and the axes. See Figure \ref{domainOmega}.
\begin{figure}[ht]
	\centering
	\includegraphics[width=0.6\textwidth]{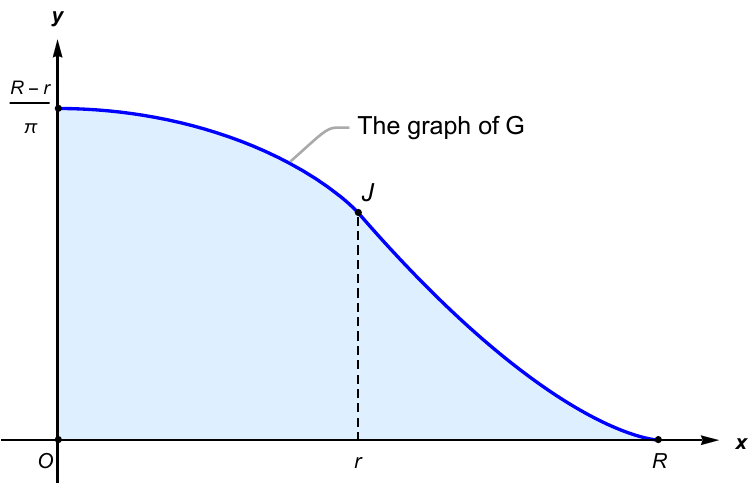}
	\caption{The domain $\Omega$.}
	\label{domainOmega}
\end{figure}

In the Dirichlet case, we write
\begin{equation*}
	\mathscr{N}_\mathbb{S}(\mu)=\mathscr{N}_\mathbb{S}^{\mathtt{D}}(\mu)=\sum_{l=0}^{\infty} \left(m_{l}^d-m_{l-1}^d\right)\mathscr{N}_l^{\mathtt{D}}(\mu),
\end{equation*}
where
\begin{equation*}
	\mathscr{N}_l^{\mathtt{D}}(\mu):=\#\{(n,k)\in \mathbb{Z}_+\times\mathbb{N} : \omega_{n,k}\leq \mu, n\geq l\}.
\end{equation*}
The superscript ``$\mathtt{D}$'' represents Dirichlet while ``$\mathtt{N}$''  represents Neumann. Correspondingly in this case we define a weighted lattice point counting function (associated with the domain $\Omega$)
\begin{equation*}
	\mathscr{P}_{\Omega}(\mu)=\mathscr{P}_{\Omega}^{\mathtt{D}}(\mu):=\sum_{l=0}^{\infty}\left(m_{l}^d-m_{l-1}^d\right)\mathscr{P}_l^{\mathtt{D}}(\mu),
\end{equation*}
where
\begin{equation*}
	\mathscr{P}_l^{\mathtt{D}}(\mu):=\#\left\{ \left(\nu, k-\tau_{\nu,k}\right)\in \mu\Omega : n\in \mathbb{Z}_+, n\geq l, k\in\mathbb{N}\right\}.
\end{equation*}

In the Neumann case, we have analogously
\begin{equation*}
\mathscr{N}_\mathbb{S}(\mu)=\mathscr{N}_\mathbb{S}^{\mathtt{N}}(\mu)=\sum_{l=0}^{\infty} \left(m_{l}^d-m_{l-1}^d\right)\mathscr{N}_l^{\mathtt{N}}(\mu),
\end{equation*}
where
\begin{equation*}
	\mathscr{N}_l^{\mathtt{N}}(\mu):=\#\{(n,k)\in \mathbb{Z}_+\times\mathbb{Z}_+ : \widetilde\omega_{n,k}\leq \mu, n\geq l\},
\end{equation*}
and we define
\begin{equation*}
	\mathscr{P}_{\Omega}(\mu)=\mathscr{P}_{\Omega}^{\mathtt{N}}(\mu):=\sum_{l=0}^{\infty}\left(m_{l}^d-m_{l-1}^d\right)\mathscr{P}_l^{\mathtt{N}}(\mu),
\end{equation*}
where
\begin{equation*}
	\mathscr{P}_l^{\mathtt{N}}(\mu):=\#\left\{ \left(\nu, k+\widetilde\tau_{\nu,k}\right)\in \mu\Omega : n\in \mathbb{Z}_+, n\geq l, k\in\mathbb{Z}_+\right\}.
\end{equation*}

We observe that the summands in the above four sums are all equal to zero if $l> R\mu-\frac{d-2}{2}$, and that $m_{l}^2-m_{l-1}^2$ equals $1$ if $l=0,1$, and $0$ otherwise, and for $d\geq 3$, $l\in\mathbb{Z}_+$,
\begin{equation}\label{bino-bound}
m_{l}^d-m_{l-1}^d=\binom{l+d-2}{d-2}-\binom{l+d-4}{d-2}  =O_d\left(l^{d-3}+1\right).
\end{equation}

As did in ``the boundary parts'' in \cite[Theorem 3.1]{colin:2011} we can control the difference between the number of eigenvalues and the corresponding number of lattice points.

\begin{lemma}\label{lemma4.1}
There exists a constant $C>0$ such that
\begin{equation*}
\left|\mathscr{N}_l^*(\mu)-\mathscr{P}_l^*(\mu)\right|\leq \mathscr{P}_l^*\left(\mu+C\mu^{-0.4}\right)-\mathscr{P}_l^*\left(\mu-C\mu^{-0.4}\right)+O\left(\mu^{0.6}\right)
\end{equation*}
for all $l\in \mathbb{Z}_+$ and $*\in\{ \mathtt{D},\mathtt{N}\}$.
\end{lemma}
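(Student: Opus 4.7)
Write the identities from Theorem \ref{approximation} uniformly as $\omega = F(\nu, K_{n,k}) + R_{n,k}$, where $(K_{n,k}, R_{n,k}) = (k - \tau_{\nu,k}, R_{\nu,k})$ in the Dirichlet case and $(K_{n,k}, R_{n,k}) = (k + \widetilde\tau_{\nu,k}, \widetilde R_{\nu,k})$ in the Neumann case. A pair $(n,k)$ with $n \geq l$ contributes to $|\mathscr{N}_l^{*}(\mu) - \mathscr{P}_l^{*}(\mu)|$ only if exactly one of $\omega \leq \mu$ and $F(\nu, K_{n,k}) \leq \mu$ holds; in that case both $|F(\nu, K_{n,k}) - \mu| \leq |R_{n,k}|$ and $|\omega - \mu| \leq 2|R_{n,k}|$. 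The plan is to split such boundary pairs by the size of $|R_{n,k}|$: pairs with $|R_{n,k}| \leq C\mu^{-0.4}$ lie in $(\mu + C\mu^{-0.4})\Omega \setminus (\mu - C\mu^{-0.4})\Omega$ and are absorbed by the strip count $\mathscr{P}_l^{*}(\mu + C\mu^{-0.4}) - \mathscr{P}_l^{*}(\mu - C\mu^{-0.4})$; what remains is to show that the pairs with $|R_{n,k}| > C\mu^{-0.4}$ contribute at most $O(\mu^{0.6})$.

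To bound the large-remainder count I would partition it according to the four regimes of Theorem \ref{approximation}. In the bulk regime $rx \geq (1+c)\nu$ (as well as the small-$\nu$ case $\nu \leq V$, which has the same bound $|R_{n,k}| \lesssim (\nu+k)^{-1}$), the condition $|R_{n,k}| > C\mu^{-0.4}$ forces $\nu + k \lesssim \mu^{0.4}$; but monotonicity and degree-one homogeneity of $F$ give $F(\nu, K_{n,k}) \leq C'(\nu + K_{n,k}) \lesssim \mu^{0.4}$, contradicting $F \approx \mu$ for large $\mu$. In the transition regime $|rx - \nu| \leq \nu^{1/3+\varepsilon}$, $|R_{n,k}| \lesssim \nu^{-2/3+3\varepsilon}$ forces $\nu \lesssim \mu^{0.6/(1-9\varepsilon/2)}$; yet $\omega \approx \nu/r$ in this regime, so $\omega \approx \mu$ forces $\nu \asymp \mu$, again a contradiction when $\varepsilon$ is small and $\mu$ large. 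Hence bulk and transition contribute nothing.

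The main work lies in the intermediate and decay regimes. For the intermediate regime, write $rx = \nu + z\nu^{1/3}$; then $|R_{n,k}| \lesssim z^{-3/2}$, so $|R_{n,k}| > C\mu^{-0.4}$ forces $z \lesssim \mu^{4/15}$. Solving $r\omega = \nu + z\nu^{1/3}$ with $\omega \approx \mu$ yields $\nu \approx r\mu - z(r\mu)^{1/3}$, pinning $\nu$ into an interval of length $\asymp \mu^{4/15}\cdot\mu^{1/3} = \mu^{3/5}$ near $r\mu$; Corollary \ref{cor1} together with $|R_{n,k}| \lesssim \nu^{-3\varepsilon/2}$ then gives only $O(1)$ admissible $k$ per $\nu$, so the intermediate total is $O(\mu^{3/5})$. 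For the decay regime $rx \leq \nu - \nu^{1/3+\varepsilon}$, $|R_{n,k}| \lesssim \nu^{1/3}(k+1)^{-4/3}$ (plus a secondary term $\nu^{-1/3-\varepsilon/2}(k+1)^{-1/3}$ in the Neumann case); since $\omega \approx \mu$ forces $\nu \asymp \mu$, the condition $|R_{n,k}| > C\mu^{-0.4}$ reduces to $k \lesssim \mu^{0.55}$. Differentiating the level-curve relation $K_{n,k} = \omega G(\nu/\omega)$ in $\omega$ with $k$ fixed, and using $G(\alpha) \asymp R^{-1/2}(R-\alpha)^{3/2}$ near $\alpha = R$, yields $\partial \nu/\partial \omega \asymp R$ (here $R$ is the outer radius of the shell), so the admissible $\nu$ for each $k$ form an interval containing $O(\mu^{1/3}(k+1)^{-4/3} + 1)$ integers. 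Summing over $k$ up to $\mu^{0.55}$ gives $O(\mu^{0.55})$ (the convergent tail $\sum k^{-4/3}$ keeps the first term bounded), and the Neumann secondary term contributes $O(\mu^{0.2})$ by a parallel count. Altogether the large-remainder count is $O(\mu^{0.6})$.

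The principal technical obstacle is the decay regime: the remainder is largest there and the level curve $\{F = \mu\}$ runs nearly tangent to the $\nu$-axis, forcing one to organize the count by $k$ rather than $\nu$ and to exploit the precise $(R - \alpha)^{3/2}$ behaviour of $G$ near $\alpha = R$ in order to estimate $\partial\nu/\partial\omega$ sharply; a naive count by $\nu$ would yield a substantially weaker bound than $O(\mu^{0.6})$. Combining the four regime contributions with the small-remainder strip count completes the proof.
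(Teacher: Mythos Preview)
Your argument is correct and rests on the same ingredients as the paper's proof (Theorem~\ref{approximation}, the $\partial_x F$ estimates from \cite[Lemma 2.13]{GMWW:2019}, and the gap bound Corollary~\ref{cor1}), but the organization is genuinely different. The paper fixes $k$ and splits into five $k$-ranges ($k\le\mu^{1/4}$, $\mu^{1/4}<k\le\mu^{4/7}$, $\mu^{4/7}<k\le G(r)\mu-C_1$, $G(r)\mu-C_1<k\le G(r)\mu+\mu^{0.6}$, and $k>G(r)\mu+\mu^{0.6}$); for each range it reads off the applicable regime of $\widetilde R_{\nu,k}$ and either absorbs the count into the strip $\mathscr{P}_{l,k}^*(\mu+C\mu^{-0.4})-\mathscr{P}_{l,k}^*(\mu-C\mu^{-0.4})$ or bounds the admissible $n$ directly via $\partial_x F$. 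You instead threshold on $|R_{n,k}|$ first and then separate the large-remainder pairs by regime. Both routes locate the dominant $O(\mu^{0.6})$ contribution at the corner $J$ (your intermediate regime with $z\lesssim\mu^{4/15}$ corresponds to the paper's range $k\in(G(r)\mu-C_1,\,G(r)\mu+\mu^{0.6}]$), and your decay-regime count by $k$ is exactly the paper's treatment of $k\le\mu^{4/7}$. The paper's slicing by $k$ is a little more economical here because, once $\omega\approx\mu$, the regime is essentially a function of $k$; your decomposition is conceptually cleaner but costs the extra step of ruling out bulk and transition contributions. One small point: in the decay regime $|R_{n,k}|$ can be as large as $\mu^{1/3}$, so ``$\omega\approx\mu$'' should be read as $\omega=\mu+O(\mu^{1/3})$, which still gives $\nu\asymp\mu$ and leaves your count unchanged.
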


\begin{proof}
We only prove the Neumann case here, as the Dirichlet case is similar. For  $k\in\mathbb{Z}_+$, we define
\begin{equation}
	\mathscr{N}^{\mathtt{N}}_{l,k}(\mu):=\#\{n\in \mathbb{Z}_+ : x''_{\nu,k}\leq \mu, n\geq l\}\label{countfcn1NC}
\end{equation}
and
\begin{equation}
	\mathscr{P}^{\mathtt{N}}_{l,k}(\mu):=\#\left\{ n\in \mathbb{Z}_+ : \left(\nu, k+ \widetilde\tau_{\nu,k}\right)\in \mu\Omega, n\geq l\right\}.\label{countfcn2NC}
\end{equation}
Then $\mathscr{N}^{\mathtt{N}}_l(\mu)$ and $\mathscr{P}^{\mathtt{N}}_l(\mu)$ are sums of \eqref{countfcn1NC} and \eqref{countfcn2NC} respectively over (finitely many) $k\in \mathbb{Z}_+$.  Using Theorem \ref{approximation} and properties of $F$ we get
\begin{align}
&\left|\mathscr{N}^{\mathtt{N}}_{l,k}(\mu)-\mathscr{P}^{\mathtt{N}}_{l,k}(\mu) \right|\leq \#\bigg(\left\{n\in\mathbb{Z}_+ : \max\{n,k\}>A\right\}\cap \nonumber\\
&\quad \left\{n\in\mathbb{Z}_+ : \mu-|\widetilde R_{\nu,k}|\leq F(\nu,k+\widetilde \tau_{\nu,k})\leq\mu+|\widetilde R_{\nu,k}|, n\geq l \right\}\bigg),\label{setNC}
\end{align}
where $A$ is a fixed sufficiently large constant such that when $\max\{n,k\}>A$ the asymptotics of $x''_{\nu,k}$ in Theorem \ref{approximation} applies.

We next use Theorem \ref{approximation} and estimates of derivatives of $F$ (in \cite[Lemma 2.13]{GMWW:2019}) to estimate $|\mathscr{N}^{\mathtt{N}}_{l,k}(\mu)-\mathscr{P}^{\mathtt{N}}_{l,k}(\mu)|$ by estimating sizes of the set in  \eqref{setNC}. We need to determine the form of $\widetilde R_{\nu,k}$ in different ranges of $k$. Note that we always have $\nu\leq R\mu$ by Proposition \ref{prop2}.

If $0\leq k\leq \mu^{1/4}$ then
\begin{equation}
\left|\mathscr{N}^{\mathtt{N}}_{l,k}(\mu)-\mathscr{P}^{\mathtt{N}}_{l,k}(\mu)\right|\lesssim \mu^{1/3}(k+1)^{-4/3}.\label{444NC}
\end{equation}
Indeed, since $k/\mu$ is less than $G(r)$ we have $\nu\asymp \mu$. This, together with Proposition \ref{estofhx''} and Corollary \ref{cor2}, gives
\begin{equation*}
\frac{G(\nu/x''_{\nu,k})}{\nu/x''_{\nu,k}}=\frac{k+O(1)}{\nu}\lesssim \mu^{-3/4},
\end{equation*}
which implies that $\nu/x''_{\nu,k}\rightarrow R-$ as $\mu\rightarrow \infty$  and thus $rx''_{\nu,k}\leq \nu-\nu^{1/3+\varepsilon}$. Therefore $\widetilde R_{\nu,k}=O(\nu^{1/3}(k+1)^{-4/3})$ and $\widetilde \tau_{\nu,k}=1/4$. Using the estimate of $\partial_x F$ we get \eqref{444NC}.

If $\mu^{1/4}< k\leq \mu^{4/7}$, by a similar argument we have $\widetilde R_{\nu,k}=O(\nu^{1/3}k^{-4/3})=O(1)$, $\widetilde \tau_{\nu,k}=1/4$ and thus
\begin{equation*}
\left|\mathscr{N}^{\mathtt{N}}_{l,k}(\mu)-\mathscr{P}^{\mathtt{N}}_{l,k}(\mu) \right|\lesssim 1.
\end{equation*}

If $\mu^{4/7}< k\leq G(r)\mu-C_1$ for a sufficiently large constant $C_1$ then
\begin{equation}\label{666NC}
\left|\mathscr{N}^{\mathtt{N}}_{l,k}(\mu)-\mathscr{P}^{\mathtt{N}}_{l,k}(\mu)\right|\le \mathscr{P}^{\mathtt{N}}_{l,k}(\mu+C\mu^{-3/7})-\mathscr{P}^{\mathtt{N}}_{l,k}(\mu-C\mu^{-3/7})
\end{equation}
for some constant $C$. Indeed, from the definition of the set \eqref{setNC} we observe that the point $(\nu,k+\widetilde\tau_{\nu,k})$ is contained in a tubular neighborhood of $\mu\partial \Omega$ of width much less than $1$ (because of $\nu\asymp \mu$ and Remark \ref{cor2-3}). For large $\mu$ the tubular neighborhood between $y=G(r)\mu$ and $y=G(r)\mu-C_1$ is close to a parallelogram. Hence if $k\leq G(r)\mu-C_1$ for a  sufficiently large $C_1$ then $\nu\geq r\mu$. As a result,
\begin{equation}
\frac{k}{\nu}\leq \frac{G(r)}{r}-\frac{C_1}{\nu}.\label{555NC}
\end{equation}
However, if $rx''_{\nu,k}\geq \nu+\nu^{1/3+\varepsilon}$ then, by Proposition \ref{estofhx''} and the monotonicity of $G$, we have
\begin{equation*}
\frac{k}{\nu}=\frac{G(\nu/x''_{\nu,k})}{\nu/x''_{\nu,k}}+O\left(\nu^{-1-\frac{3}{2}\varepsilon} \right)>\frac{G(r)}{r}+O\left(\nu^{-1-\frac{3}{2}\varepsilon}\right),
\end{equation*}
which contradicts with \eqref{555NC}. Thus $rx''_{\nu,k}< \nu+\nu^{1/3+\varepsilon}$. This implies that $\widetilde R_{\nu,k}$ is either $O(\nu^{-2/3+3\varepsilon})$ or $O(\nu^{1/3}k^{-4/3}+\nu^{-1/3}k^{-1/3})$, both of which are of size $O(\mu^{-3/7})$. We immediately get \eqref{666NC}.

If $G(r)\mu-C_1< k\leq G(r)\mu+\mu^{0.6}$, we still have $\nu\asymp \mu$. By using the trivial estimate $\widetilde R_{\nu,k}=O(1)$, \cite[Lemma 2.13]{GMWW:2019} and the mean value theorem we get
\begin{equation*}
	\left|\mathscr{N}^{\mathtt{N}}_{l,k}(\mu)-\mathscr{P}^{\mathtt{N}}_{l,k}(\mu) \right|\lesssim 1.
\end{equation*}

If $k>G(r)\mu+\mu^{0.6}$ then
\begin{equation}
\left|\mathscr{N}^{\mathtt{N}}_{l,k}(\mu)-\mathscr{P}^{\mathtt{N}}_{l,k}(\mu) \right|\leq \mathscr{P}^{\mathtt{N}}_{l,k}(\mu+C\mu^{-0.4})-\mathscr{P}^{\mathtt{N}}_{l,k}(\mu-C\mu^{-0.4})\label{888NC}
\end{equation}
for some constant $C$. As argued in the proof of \eqref{666NC}, we conclude that $\nu<r\mu$. Then
\begin{equation*}
k-\frac{G(r)}{r}\nu>\mu^{0.6}.
\end{equation*}
With this lower bound, we can show that
\begin{equation}
	\widetilde R_{\nu,k}=O(\mu^{-0.4}),\label{ine1}
\end{equation}
which gives the inequality \eqref{888NC} immediately. To obtain \eqref{ine1}, we first notice that $\widetilde R_{\nu,k}$ has possibly four forms (see Theorem \ref{approximation}). The first one $O((\nu+k)^{-1})=O(\mu^{-1})$ is small enough. The second one is of size $O(\mu^{-0.4})$ by the above lower bound. We next observe that if $\nu/\mu$ is sufficiently small, by Proposition \ref{estofhx''} and the monotonicity of $G$, then $rx''_{\nu,k}\geq (1+c)\nu$ and $\widetilde R_{\nu,k}$ is of the first form. Therefore, it remains to consider the case when the ratio $\nu/\mu\gtrsim 1$. Hence the third and fourth forms of $\widetilde R_{\nu,k}$ are $\lesssim \mu^{-2/3+3\varepsilon}<\mu^{-0.4}$.

Summing the above bounds of $|\mathscr{N}^{\mathtt{N}}_{l,k}(\mu)-\mathscr{P}^{\mathtt{N}}_{l,k}(\mu)|$ over $k$ yields the desired inequality.
\end{proof}

One can then transfer the spectrum counting problem to a lattice point counting problem via the following result.  By using Lemma \ref{lemma4.1} and the bound in \eqref{bino-bound}, we readily get the following in both the Dirichlet and Neumann cases.

\begin{proposition}\label{difference1}
There exists a constant $C>0$ such that
\begin{equation*}
\left|\mathscr{N}_\mathbb{S}(\mu)-\mathscr{P}_{\Omega}(\mu)\right|\leq  \mathscr{P}_{\Omega}(\mu+C\mu^{-0.4})-\mathscr{P}_{\Omega}(\mu-C\mu^{-0.4})+O\left(\mu^{d-2+0.6}\right).
\end{equation*}
\end{proposition}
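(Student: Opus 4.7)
The plan is to reduce the proof to a weighted sum of the bounds provided by Lemma \ref{lemma4.1} over the index $l$, using the binomial estimate \eqref{bino-bound} to control the total weight; the argument is essentially bookkeeping, since the hard analytic content has already been absorbed into Lemma \ref{lemma4.1}.

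First, using the definitions stated just above Lemma \ref{lemma4.1}, I would write
\begin{equation*}
\mathscr{N}_\mathbb{S}(\mu) - \mathscr{P}_{\Omega}(\mu) = \sum_{l=0}^{\infty}\left(m_{l}^d - m_{l-1}^d\right)\bigl(\mathscr{N}_l^*(\mu) - \mathscr{P}_l^*(\mu)\bigr),
\end{equation*}
where $\ast \in \{\mathtt{D},\mathtt{N}\}$ matches the boundary condition being considered. As noted in the excerpt, the summands vanish once $l > R\mu - (d-2)/2$, so this is effectively a finite sum with at most $O(\mu)$ nonzero terms.

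Next, I would apply the triangle inequality and invoke Lemma \ref{lemma4.1} termwise to obtain
\begin{equation*}
\bigl|\mathscr{N}_\mathbb{S}(\mu) - \mathscr{P}_{\Omega}(\mu)\bigr| \leq \Sigma_1(\mu) + \Sigma_2(\mu),
\end{equation*}
with
\begin{equation*}
\Sigma_1(\mu) := \sum_{l=0}^{\infty}\left(m_{l}^d - m_{l-1}^d\right)\Bigl(\mathscr{P}_l^*(\mu + C\mu^{-0.4}) - \mathscr{P}_l^*(\mu - C\mu^{-0.4})\Bigr)
\end{equation*}
and $\Sigma_2(\mu)$ collecting the $O(\mu^{0.6})$ contributions, each weighted by $m_l^d - m_{l-1}^d$. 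By the very definitions of $\mathscr{P}_\Omega^{\mathtt{D}}$ and $\mathscr{P}_\Omega^{\mathtt{N}}$, the sum $\Sigma_1(\mu)$ reassembles directly into $\mathscr{P}_\Omega(\mu + C\mu^{-0.4}) - \mathscr{P}_\Omega(\mu - C\mu^{-0.4})$.

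Finally, to bound $\Sigma_2(\mu)$, I would use \eqref{bino-bound}, namely $m_l^d - m_{l-1}^d = O_d(l^{d-3} + 1)$ for $d \geq 3$, together with the truncation $l \lesssim \mu$, to estimate
\begin{equation*}
\Sigma_2(\mu) \lesssim \mu^{0.6}\sum_{l=0}^{\lfloor R\mu \rfloor}(l^{d-3} + 1) \lesssim \mu^{0.6}\cdot \mu^{d-2} = \mu^{d-2+0.6};
\end{equation*}
for $d = 2$ only the indices $l = 0, 1$ contribute and the bound is $O(\mu^{0.6})$, which agrees with the claimed $O(\mu^{d-2+0.6})$. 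Combining the two estimates yields the proposition. There is no real obstacle here beyond the careful bookkeeping of the weights $m_l^d - m_{l-1}^d$; the only mildly delicate point is verifying that the range of $l$ genuinely truncates at $O(\mu)$, which follows from the lower bounds on the zeros $\omega_{n,k}$ and $\widetilde\omega_{n,k}$ provided by Propositions \ref{case0} and \ref{prop2}.
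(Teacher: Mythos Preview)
Your proposal is correct and follows the same approach as the paper, which simply says ``By using Lemma~\ref{lemma4.1} and the bound in~\eqref{bino-bound}, we readily get the following in both the Dirichlet and Neumann cases.'' You have spelled out exactly this bookkeeping; the only point you might make explicit is that the weights $m_l^d - m_{l-1}^d$ are nonnegative (which is immediate from \eqref{bino-bound} and the $d=2$ formula), so that the triangle inequality produces $\Sigma_1$ with the correct sign and it reassembles into $\mathscr{P}_\Omega(\mu+C\mu^{-0.4}) - \mathscr{P}_\Omega(\mu-C\mu^{-0.4})$ as claimed.
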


\begin{remark}
In spectral theory problems of counting eigenvalues are sometimes transformed into problems of counting lattice points. However, the argument in the proof of Lemma \ref{lemma4.1} reminds us that the reverse is also possible. For example, we can derive an inequality using eigenvalue counting to obtain lattice point counting:
\begin{equation*}
\left|\mathscr{N}_\mathbb{S}(\mu)-\mathscr{P}_{\Omega}(\mu)\right|\leq  \mathscr{N}_\mathbb{S}(\mu+C\mu^{-0.4})-\mathscr{N}_\mathbb{S}(\mu-C\mu^{-0.4})+O\left(\mu^{d-2+0.6}\right).
\end{equation*}
This provides us with another possible novel way to study certain specific problems of lattice point counting.
\end{remark}

Notice that $\mathscr{P}_{\Omega}(\mu)$ is about counting lattice points with weights and different translations. We will transfer it into standard lattice point problems with the same translations and meanwhile estimate the differences caused by such transformations. In fact, we will move every point $(\nu, k-\tau_{\nu,k})$ to $(\nu, k-1/4)$ and  every $(\nu, k+\widetilde\tau_{\nu,k})$ to $(\nu, k+1/4)$. Accordingly, we define the following new counting functions: firstly in the Dirichlet case
\begin{equation*}
	\mathscr{Q}_{\Omega}(\mu)=\mathscr{Q}_{\Omega}^{\mathtt{D}}(\mu):=\sum_{l=0}^{\infty}\left(m_{l}^d-m_{l-1}^d\right)\mathscr{Q}_l^{\mathtt{D}}(\mu),
\end{equation*}
where
\begin{equation*}
	\mathscr{Q}_l^{\mathtt{D}}(\mu):=\#\{ (\nu,k-1/4)\in \mu\Omega : n\in \mathbb Z_+,n\ge l,k\in\mathbb{N}\},
\end{equation*}
and secondly in the Neumann case
\begin{equation*}
	\mathscr{Q}_{\Omega}(\mu)=\mathscr{Q}_{\Omega}^{\mathtt{N}}(\mu):=\sum_{l=0}^{\infty}\left(m_{l}^d-m_{l-1}^d\right)\mathscr{Q}^{\mathtt{N}}_l(\mu),
\end{equation*}	
where
\begin{equation*}
	\mathscr{Q}_l^{\mathtt{N}}(\mu):=\#\{ (\nu,k+1/4)\in \mu\Omega : n\in \mathbb Z_+,n\ge l,k\in \mathbb Z_+\}.
\end{equation*}

To quantify the difference between $\mathscr{P}_{\Omega}(\mu)$ and $\mathscr{Q}_{\Omega}(\mu)$, we need to count points in certain bands of width $1/4$. For $0<L\leq R\mu$, we define bands on $[0, L]$ as follows
\begin{equation*}
\mathcal{B}_{L}^{\mathtt{D}}=\left\{(x,y)\in\mathbb{R}^2 : 0\leq x\leq L, \,  \mu G\left(\frac{x}{\mu} \right)< y\leq \mu G\left(\frac{x}{\mu} \right)+\frac{1}{4} \right\}
\end{equation*}
and
\begin{equation*}
\mathcal{B}_{L}^{\mathtt{N}}=\left\{(x,y)\in\mathbb{R}^2 : 0\leq x\leq L,\,\mu G\left(\frac{x}{\mu} \right)-\frac{1}{4}< y\leq \mu G\left(\frac{x}{\mu} \right) \right\}.
\end{equation*}
These bands are formed by translating up/down part of the boundary $\mu\partial\Omega$ by $1/4$. We define associated counting functions
\begin{equation*}
\mathscr{Q}_{\mathcal{B}_{r\mu}^{\mathtt{D}},l}=\# \left\{(\nu,k)\in \mathcal{B}_{r\mu}^{\mathtt{D}}:n\in \mathbb{Z}_+,n\ge l, k\in \mathbb N \right\}
\end{equation*}
and
\begin{equation*}
\mathscr{Q}_{\mathcal{B}_{r\mu}^{\mathtt{N}},l}=\# \left\{(\nu,k)\in \mathcal{B}_{r\mu}^{\mathtt{N}} : n\in \mathbb{Z}_+,n\ge l, k\in \mathbb{Z}_+ \right\}.
\end{equation*}
Both functions equal to zero trivially when $l+d/2-1>r\mu$.

\begin{lemma} \label{difference3}
	\begin{equation*}
	\mathscr{Q}_l^{*}(\mu)=\mathscr{P}_l^{*}(\mu)\pm \mathscr{Q}_{\mathcal{B}_{r\mu}^{*},l}+O(\mu^{1/3+\varepsilon})
	\end{equation*}
	for $l\in \mathbb Z_+$ and $*\in\{ \mathtt{D},\mathtt{N}\}$, where we take the sign ``$+$'' (resp., ``$-$'') when $*=\mathtt{D}$ (resp., $*=\mathtt{N}$).
\end{lemma}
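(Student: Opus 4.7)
The plan is to fix each $n\geq l$ (so $\nu=n+d/2-1$) and, in the Dirichlet case, to compare, for each $k\in\mathbb{N}$, the two membership conditions ``$(\nu,k-\tau_{\nu,k})\in\mu\Omega$'' and ``$(\nu,k-1/4)\in\mu\Omega$''. Invoking Proposition \ref{prop2} to rule out the range $\nu>R\mu$, these reduce to $k\leq \mu G(\nu/\mu)+\tau_{\nu,k}$ and $k\leq \mu G(\nu/\mu)+1/4$ respectively. Since $\tau_{\nu,k}\in[0,1/4]$ always, $\mathscr{Q}^{\mathtt{D}}_l(\mu)\geq \mathscr{P}^{\mathtt{D}}_l(\mu)$, and the surplus consists of those $(\nu,k)$ with
\[
\mu G(\nu/\mu)+\tau_{\nu,k}<k\leq \mu G(\nu/\mu)+1/4,
\]
which is nontrivial only when $\tau_{\nu,k}<1/4$.

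I will then split into the three regimes of Theorem \ref{approximation}: Case 1 ($\tau_{\nu,k}=0$), Case 2 ($\tau_{\nu,k}=\psi_1(z_{\nu,k})\in(0,1/4)$) and Case 3 ($\tau_{\nu,k}=1/4$). Any surplus lattice point lies in a band of vertical width less than $1$ around the curve $y=\mu G(\nu/\mu)$, so the associated zero $x_{\nu,k}=F(\nu,k-\tau_{\nu,k})+R_{\nu,k}$ satisfies $x_{\nu,k}=\mu+O(1)$ by the homogeneity of $F$ together with the bounds on $\partial_y F$ from \cite[Lemma 2.13]{GMWW:2019}. Consequently $rx_{\nu,k}=r\mu+O(1)$, and the three regimes correspond respectively to
\[
\nu\leq r\mu-C\mu^{1/3+\varepsilon},\quad |\nu-r\mu|\leq C\mu^{1/3+\varepsilon},\quad \nu\geq r\mu+C\mu^{1/3+\varepsilon},
\]
for a suitable constant $C>0$.

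Case 3 contributes no surplus. In Case 1 the surplus interval is precisely $\mu G(\nu/\mu)<k\leq \mu G(\nu/\mu)+1/4$, which is exactly the portion of $\mathscr{Q}_{\mathcal{B}_{r\mu}^{\mathtt{D}},l}$ supported on $\nu\leq r\mu-C\mu^{1/3+\varepsilon}$. For Case 2 the surplus interval has length strictly less than $1/4$, hence contains at most one integer per $\nu$; since there are only $O(\mu^{1/3+\varepsilon})$ admissible $\nu$, its total contribution is $O(\mu^{1/3+\varepsilon})$. By the same band-thickness estimate, the portion of $\mathscr{Q}_{\mathcal{B}_{r\mu}^{\mathtt{D}},l}$ on the sliver $\nu\in(r\mu-C\mu^{1/3+\varepsilon},r\mu]$ also contributes $O(\mu^{1/3+\varepsilon})$ lattice points. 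Assembling these pieces gives $\mathscr{Q}^{\mathtt{D}}_l(\mu)-\mathscr{P}^{\mathtt{D}}_l(\mu)=\mathscr{Q}_{\mathcal{B}_{r\mu}^{\mathtt{D}},l}+O(\mu^{1/3+\varepsilon})$.

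The Neumann case is symmetric but with reversed sign: the condition $(\nu,k+\widetilde\tau_{\nu,k})\in\mu\Omega$ reads $k\leq \mu G(\nu/\mu)-\widetilde\tau_{\nu,k}$, so $\mathscr{P}^{\mathtt{N}}_l\geq \mathscr{Q}^{\mathtt{N}}_l$ with deficit lying in the downward band $\mathcal{B}_{r\mu}^{\mathtt{N}}$; the same three-case argument produces the ``$-$'' version of the identity. The corner $\nu\leq V$, $k\leq K$ (where we have set $\tau=\widetilde\tau=1/4$ by convention) is harmless because any lattice point contributing to either surplus has $k\asymp \mu(R-r)/\pi\to\infty$ for large $\mu$. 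The main obstacle will be the bookkeeping in the transition zone $|\nu-r\mu|\lesssim \mu^{1/3+\varepsilon}$: one must confirm both that each such $\nu$ hosts only $O(1)$ relevant values of $k$ and that replacing the $\tau_{\nu,k}$-weighted surplus by the $1/4$-shifted band count introduces only an $O(\mu^{1/3+\varepsilon})$ overall error.
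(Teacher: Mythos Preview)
Your argument is essentially the paper's own proof, organized slightly differently. Both proofs split by the value of $\tau_{\nu,k}$ (resp.\ $\widetilde\tau_{\nu,k}$), observe that the case $\tau=1/4$ contributes no surplus, identify the case $\tau=0$ with the band count, and confine the intermediate case to a sliver of $O(\mu^{1/3+\varepsilon})$ values of $\nu$. The paper frames the separation geometrically via the line $OJ$ (slope $G(r)/r$ through the origin): points with $\widetilde\tau=0$ lie above $OJ$ and those with $\widetilde\tau=1/4$ lie below it, by Proposition~\ref{estofhx''}; you instead translate the regime conditions on $rx_{\nu,k}$ into the $\nu$-ranges $|\nu-r\mu|\lessgtr C\mu^{1/3+\varepsilon}$ via $x_{\nu,k}=\mu+O(1)$. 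These are two descriptions of the same trichotomy.

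One small point: your deduction of $x_{\nu,k}=\mu+O(1)$ from ``$(\nu,k-\tau_{\nu,k})$ lies in a band of width $<1$ around $\mu\partial\Omega$'' requires $\partial_yF\asymp 1$, which fails near the corner $(R\mu,0)$. The paper handles this by first noting (from Proposition~\ref{estofhx''}) that any surplus point with $\tau<1/4$ must lie in a cone bounded away from the axes, so $\partial_yF$ is indeed bounded there. Your argument implicitly uses the same fact (a surplus point has $\tau_{\nu,k}\neq 1/4$, hence $rx_{\nu,k}>\nu-\nu^{1/3+\varepsilon}$, which forces $(k+O(1))/\nu\gtrsim G(r)/r$ by Proposition~\ref{thm111}); it would be worth making that step explicit. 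Also, the reference to Proposition~\ref{prop2} for ruling out $\nu>R\mu$ is unnecessary (it follows from the shape of $\Omega$) and in any case Proposition~\ref{prop2} concerns $x''_{\nu,k}$, not $x_{\nu,k}$.
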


\begin{remark}
From the proof of this lemma, we observe that $\mathscr{Q}_l^{*}(\mu)=\mathscr{P}_l^{*}(\mu)$ if $l+d/2-1>r\mu+C'\mu^{1/3+\varepsilon}$.
\end{remark}

\begin{proof}[Proof of Lemma \ref{difference3}]
We will only give a brief proof of the Neumann case. See \cite[Proposition 3.2]{GMWW:2019} for a proof of the Dirichlet case.

In the process of moving the points $(\nu, k+\widetilde\tau_{\nu,k})$ up to $(\nu, k+1/4)$, some of these points may get out of the domain $\mu\Omega$ but none enters the domain. The difference $\mathscr{P}_l^{\mathtt{N}}(\mu)-\mathscr{Q}_l^{\mathtt{N}}(\mu)$ equals the number of points $(\nu, k+\widetilde\tau_{\nu,k})\in \mathcal{B}_{R\mu}^{\mathtt{N}}$ with $n\geq l$ that leave the domain  $\mu\Omega$.

The points $(\nu, k+\widetilde\tau_{\nu,k})\in \mathcal{B}_{R\mu}^{\mathtt{N}}$ with
$\widetilde\tau_{\nu,k}=0$ are all above the line $OJ$ (see Figure \ref{domainOmega}) because of $k/\nu>G(r)/r$ as a consequence of Proposition \ref{estofhx''} and Corollary  \ref{cor2}. These points get out of $\mu\Omega$ surely.

The points $(\nu, k+\widetilde\tau_{\nu,k})\in \mathcal{B}_{R\mu}^{\mathtt{N}}$ with
$\widetilde\tau_{\nu,k}=1/4$ are all below the line $OJ$ because of  $(k+1/4)/\nu<G(r)/r$ by Proposition \ref{estofhx''} and Corollary  \ref{cor2} again. These points remain unmoved.

The points $(\nu, k+\widetilde\tau_{\nu,k})\in \mathcal{B}_{R\mu}^{\mathtt{N}}$ with
$0<\widetilde\tau_{\nu,k}<1/4$ may get out of $\mu\Omega$. We do not know whether they are above or below the line $OJ$ but we do know that they satisfy $|\nu-r\mu|\leq C'\mu^{1/3+\varepsilon}$  for some large constant $C'$.  Indeed, by Theorem \ref{approximation}, if $\nu-\nu^{1/3+\varepsilon}<rx''_{\nu,k}<\nu+\nu^{1/3+\varepsilon}$ then
\begin{equation*}
x''_{\nu, k}=F(\nu,k+\widetilde\tau_{\nu,k})+O\left(\nu^{-2/3+3\varepsilon}\right)=\mu+O(1),
\end{equation*}
where the last equality follows from the fact that  $(\nu, k+\widetilde\tau_{\nu,k})$ must be  contained in a cone (in the first quadrant with its vertex at the origin) away from the axes (by Proposition \ref{estofhx''}) and hence in $\mu\Omega\setminus (\mu-O(1))\Omega$. Plugging this formula into the above inequality of $x''_{\nu,k}$ yields the desired range of $\nu$.

Based on the above facts, we conclude that the points $(\nu,k+\widetilde\tau_{\nu,k})$ in the band $\mathcal{B}_{r\mu-C'\mu^{1/3+\varepsilon}}^{\mathtt{N}}$ are all with $\widetilde\tau_{\nu,k}=0$; the points $(\nu,k+\widetilde\tau_{\nu,k})$ in $\mathcal{B}_{R\mu}^{\mathtt{N}}\setminus \mathcal{B}_{r\mu+C'\mu^{1/3+\varepsilon} }^{\mathtt{N}}$ are all with $\widetilde\tau_{\nu,k}=1/4$. Some of the points $(\nu,k+\widetilde\tau_{\nu,k})\in \mathcal{B}_{R\mu}^{\mathtt{N}}$ with $|\nu-r\mu|< C'\mu^{1/3+\varepsilon}$ may get out of $\mu\Omega$ but its number is relatively small and of size $O(\mu^{1/3+\varepsilon})$. Collecting all these facts leads to the desired equality.
\end{proof}

As a consequence of Lemma \ref{difference3} and the bound in \eqref{bino-bound}, we obtain the following easily.
\begin{proposition}\label{difference4}
	\begin{equation*}
		\mathscr{P}_{\Omega}^*(\mu)=\mathscr{Q}_{\Omega}^*(\mu)\mp \sum_{0\leq l\leq r\mu-\frac{d-2}{2}}\left(m_l^d-m_{l-1}^d\right)\mathscr{Q}_{\mathcal{B}_{r\mu}^{*},l}+O\left(\mu^{d-2+\frac{1}{3}+\varepsilon}\right),
	\end{equation*}
where $*\in\{ \mathtt{D},\mathtt{N}\}$ and we take the sign ``$-$'' (resp., ``$+$'') when $*=\mathtt{D}$ (resp., $*=\mathtt{N}$).
\end{proposition}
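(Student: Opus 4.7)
The plan is to derive Proposition \ref{difference4} simply by multiplying the pointwise identity of Lemma \ref{difference3} by the weights $m_l^d - m_{l-1}^d$ and summing over $l \ge 0$. Writing $\epsilon_* = -1$ for $* = \mathtt{D}$ and $\epsilon_* = +1$ for $* = \mathtt{N}$ (so that $\pm$ in Lemma \ref{difference3} becomes $-\epsilon_*$), Lemma \ref{difference3} gives
\begin{equation*}
  \mathscr{P}_l^{*}(\mu) - \mathscr{Q}_l^{*}(\mu) \;=\; \epsilon_*\, \mathscr{Q}_{\mathcal{B}_{r\mu}^{*},l} + O(\mu^{1/3+\varepsilon}), \qquad l \in \mathbb{Z}_+.
\end{equation*}
Multiplying by $m_l^d - m_{l-1}^d$ and summing, the left-hand side telescopes to $\mathscr{P}_\Omega^*(\mu) - \mathscr{Q}_\Omega^*(\mu)$ by definition, producing the main identity with the two terms of the correct sign.

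Next I would handle the range of summation of the boundary band term. Since $\mathscr{Q}_{\mathcal{B}_{r\mu}^{*},l}$ vanishes as soon as the lower edge $\nu = l + \tfrac{d}{2}-1$ of the admissible $\nu$-range exceeds $r\mu$, the sum $\sum_{l\ge 0}(m_l^d - m_{l-1}^d)\mathscr{Q}_{\mathcal{B}_{r\mu}^{*},l}$ is automatically truncated to $0 \le l \le r\mu - \tfrac{d-2}{2}$, matching the statement.

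The remaining task is the error estimate. For $l$ sufficiently large (namely $l > R\mu - \tfrac{d-2}{2}$), $\mathscr{P}_l^*(\mu)$, $\mathscr{Q}_l^*(\mu)$ and $\mathscr{Q}_{\mathcal{B}_{r\mu}^{*},l}$ are all zero, so those indices contribute nothing; effectively only $l \lesssim \mu$ matter. Using \eqref{bino-bound},
\begin{equation*}
  \sum_{0 \le l \le R\mu - \frac{d-2}{2}} \bigl(m_l^d - m_{l-1}^d\bigr) \;=\; m_{\lfloor R\mu - (d-2)/2 \rfloor}^d \;=\; O_d\!\left(\mu^{d-2}\right)
\end{equation*}
(telescoping, with the convention $m_{-1}^d = 0$; alternatively one can bound $\sum_l (l^{d-3}+1)$ directly for $d \ge 3$, and handle $d = 2$ trivially since only $l = 0, 1$ contribute). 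Hence the accumulated error from the $O(\mu^{1/3+\varepsilon})$ terms is
\begin{equation*}
  O\!\left(\mu^{d-2} \cdot \mu^{1/3+\varepsilon}\right) \;=\; O\!\left(\mu^{d-2 + 1/3 + \varepsilon}\right),
\end{equation*}
which gives exactly the claimed remainder.

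I do not anticipate any serious obstacle here; the statement is presented in the paper as an easy consequence, and the only minor point requiring care is checking that the error term of Lemma \ref{difference3} does not accumulate beyond the effective range $l \lesssim \mu$. This follows at once because both $\mathscr{P}_l^*$ and $\mathscr{Q}_l^*$ are supported in $l \le R\mu - \tfrac{d-2}{2}$, so the weighted sum of the error is controlled by $\mu^{1/3+\varepsilon}$ times the total binomial weight $O(\mu^{d-2})$.
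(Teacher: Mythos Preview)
Your proposal is correct and matches the paper's approach exactly: the paper states that Proposition~\ref{difference4} follows ``easily'' from Lemma~\ref{difference3} and the bound \eqref{bino-bound}, which is precisely your weighted summation argument. One tiny remark: the telescoping identity $\sum_l (m_l^d-m_{l-1}^d)=m_L^d$ gives the \emph{signed} sum, whereas bounding $\sum_l (m_l^d-m_{l-1}^d)\,O(\mu^{1/3+\varepsilon})$ a priori needs $\sum_l |m_l^d-m_{l-1}^d|$; your alternative via \eqref{bino-bound} handles this directly (and in fact the differences are nonnegative anyway), so there is no gap.
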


By Propositions \ref{difference1} and \ref{difference4}, in order to obtain an asymptotics of $\mathscr{N}_{\mathbb{S}}^*(\mu)$ with $*\in\{ \mathtt{D},\mathtt{N}\}$, we just need to study lattice point counting functions $\mathscr{Q}_{\Omega}^*(\mu)$ and $\mathscr{Q}_{\mathcal{B}_{r\mu}^{*},l}$. We will do that in Section \ref{sec5}.


\subsection{The ball case} \label{subsec4.2}

As in Section \ref{intro}, we let $\mathscr{N}_\mathbb{B}(\mu)$ denote the eigenvalue counting function for the Dirichlet/Neumann Laplacian associated with the ball $\mathbb{B}$.

Let $\Omega_0$ denote the closed domain bounded by the axes and the graph of
\begin{equation*}
G_0(x)=Rg(x/R),\quad 0\leq x\leq R.
\end{equation*}
See Figure \ref{domainOmega0}.
\begin{figure}[ht]
	\centering
	\includegraphics[width=0.6\textwidth]{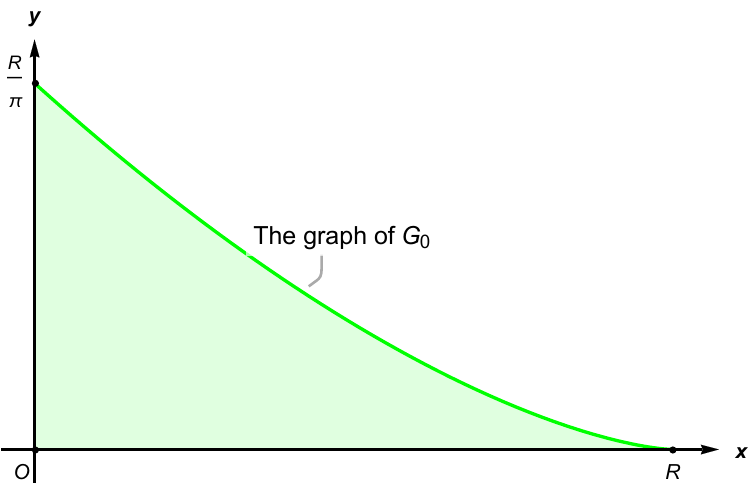}
	\caption{The domain $\Omega_0$.}
	\label{domainOmega0}
\end{figure}
We define a weighted lattice point counting function (associated with the domain $\Omega_0$) corresponding to the Dirichlet  Laplacian
\begin{equation*}
	\mathscr{Q}_{\Omega_0}(\mu)=\mathscr{Q}_{\Omega_0}^{\mathtt{D}}(\mu):=\sum_{l=0}^{\infty}\left(m_{l}^d-m_{l-1}^d\right)\mathscr{Q}_{\Omega_0,l}^{\mathtt{D}}(\mu),
\end{equation*}
where
\begin{equation*}
	\mathscr{Q}_{\Omega_0,l}^{\mathtt{D}}(\mu):=\#\left\{ \left(\nu, k-1/4\right)\in \mu\Omega_0 : n\in \mathbb{Z}_+, n\geq l, k\in\mathbb{N}\right\}.
\end{equation*}
We define an analogous version corresponding to the Neumann  Laplacian
\begin{equation*}
	\mathscr{Q}_{\Omega_0}(\mu)=\mathscr{Q}_{\Omega_0}^{\mathtt{N}}(\mu):=\sum_{l=0}^{\infty}\left(m_{l}^d-m_{l-1}^d\right)\mathscr{Q}_{\Omega_0,l}^{\mathtt{N}}(\mu),
\end{equation*}
where
\begin{equation*}
	\mathscr{Q}_{\Omega_0,l}^{\mathtt{N}}(\mu):=\#\left\{ \left(\nu, k+1/4\right)\in \mu\Omega_0 : n\in \mathbb{Z}_+, n\geq l, k\in\mathbb{Z}_+\right\}.
\end{equation*}

We have the following ``ball'' version of Proposition \ref{difference1}.
\begin{proposition} \label{prop4.7}
	There exists a constant $C>0$ such that
	\begin{equation*}
		\left|\mathscr{N}_\mathbb{B}(\mu)-\mathscr{Q}_{\Omega_0}(\mu)\right|\leq  \mathscr{Q}_{\Omega_0}(\mu+C\mu^{-3/7})-\mathscr{Q}_{\Omega_0}(\mu-C\mu^{-3/7})+O\left(\mu^{d-2+4/7}\right).
	\end{equation*}
\end{proposition}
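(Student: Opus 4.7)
The plan is to mirror the shell argument of Lemma~\ref{lemma4.1} and Proposition~\ref{difference1}, but substantially simplified because the ball has only one boundary component and therefore only one transition region. In particular, we can bypass the intermediate weighted counting function $\mathscr{P}$ and compare $\mathscr{N}_\mathbb{B}$ directly with $\mathscr{Q}_{\Omega_0}$, since there is no inner radius producing a second boundary band near $k\sim G(r)\mu$ as in the shell case. This is precisely the simplification that allows the additive exponent to drop from $3/5$ to $4/7$.

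First I would decompose
\[
\mathscr{N}_\mathbb{B}^*(\mu) = \sum_{l=0}^{\infty} \left(m_l^d - m_{l-1}^d\right)\, \mathscr{N}_{\mathbb{B},l}^*(\mu), \qquad *\in\{\mathtt{D},\mathtt{N}\},
\]
where $\mathscr{N}_{\mathbb{B},l}^*$ counts pairs $(n,k)$ with $n\geq l$ and $\omega_{n,k}\leq\mu$; here $R\omega_{n,k}$ is a positive zero of $J_\nu$ in the Dirichlet case and of $j'_{\nu,\delta}$ in the Neumann case, with $\nu=n+d/2-1$ and $\delta=d/2-1$. The ball analogs of Theorems~\ref{thm2.20} and~\ref{approximation}, stated in Section~\ref{subsec4.2} (namely Theorems~\ref{thm4.11} and~\ref{thm4.12}), then yield a uniform asymptotic of the form
\[
\omega_{n,k} = F_0\bigl(\nu,\, k \mp \tau_{\nu,k}\bigr) + R_{\nu,k},
\]
where $F_0$ is the Minkowski functional of the graph of $G_0$, the translations $\tau_{\nu,k}$ take the same piecewise form as in Theorem~\ref{approximation} (only the oscillatory and transition regimes appearing, since the evanescent one is empty: all Bessel zeros satisfy $R\omega_{n,k}>\nu$), and the remainders $R_{\nu,k}$ obey the same piecewise bounds as in Theorem~\ref{approximation} minus the ``second'' regime that was tied to the inner radius.

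Next, for each fixed $l$ I would control $|\mathscr{N}_{\mathbb{B},l,k}^* - \mathscr{Q}_{\Omega_0,l,k}^*|$ by the size of $k$, following the template of Lemma~\ref{lemma4.1}. For $0\leq k\leq \mu^{1/4}$ the per-pair error is $O(\mu^{1/3}(k+1)^{-4/3})$, whose $k$-sum is $O(\mu^{1/3})$; for $\mu^{1/4}<k\leq \mu^{4/7}$ the per-pair error is $O(1)$, summing to $O(\mu^{4/7})$ per $l$; and in the transition strip $|\nu-R\mu|\lesssim \mu^{1/3+\varepsilon}$ there are only $O(\mu^{1/3+\varepsilon})$ relevant pairs, which is absorbed into $O(\mu^{4/7})$. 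For $k>\mu^{4/7}$ the remainder satisfies $R_{\nu,k}=O(\mu^{-3/7})$, so the mismatch is absorbed into the band
\[
\mathscr{Q}_{\Omega_0,l}\bigl(\mu+C\mu^{-3/7}\bigr) - \mathscr{Q}_{\Omega_0,l}\bigl(\mu-C\mu^{-3/7}\bigr),
\]
by the same argument that produced \eqref{888NC}. Summing the additive $O(\mu^{4/7})$ per $l$ over $l\leq R\mu$ with weights $m_l^d-m_{l-1}^d\lesssim l^{d-3}+1$ gives the desired $O(\mu^{d-2+4/7})$.

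The main obstacle is technical rather than conceptual: one must verify that the ball-case uniform asymptotics for zeros of $J_\nu$ (Dirichlet) and $j'_{\nu,\delta}$ (Neumann) indeed carry exactly the piecewise translation-and-remainder structure that powers the $k$-range split above, and that the $\tau_{\nu,k}\in(0,1/4)$ transition translations occur only in the narrow strip $|\nu-R\mu|\lesssim \mu^{1/3+\varepsilon}$, so they cannot spoil the count near $\mu\partial\Omega_0$. The Dirichlet side follows from classical McMahon expansions of $J_\nu$, while the Neumann side requires the new ultraspherical Bessel zero results developed in Sections~\ref{zeros}--\ref{sec3} and adapted to the ball in Section~\ref{subsec4.2}.
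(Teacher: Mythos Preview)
Your approach is essentially the one the paper takes (deferring the Dirichlet case to \cite{Guo} and repeating the argument for Neumann using Theorem~\ref{thm4.11}), and your $k$-range split with cutoffs $\mu^{1/4}$ and $\mu^{4/7}$ is exactly right.

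There is, however, a minor misconception in your description. In the ball case the translation is \emph{not} piecewise: Proposition~\ref{prop4.9} and Theorem~\ref{thm4.11} give
\[
a'_{\nu,\delta,k}=F_g\bigl(\nu,\,k+\tfrac14\bigr)+\mathfrak{R}'_{\nu,k}
\]
with a fixed shift $+1/4$ in every regime (and analogously $-1/4$ for the Dirichlet zeros of $J_\nu$). There is no $\psi$-type intermediate translation and hence no ``transition strip $|\nu-R\mu|\lesssim\mu^{1/3+\varepsilon}$'' to account for. This constant translation is precisely \emph{why} one can skip $\mathscr{P}$ and go straight to $\mathscr{Q}_{\Omega_0}$---you identified the correct shortcut but attributed it to the wrong cause (absence of an inner radius rather than constancy of the shift). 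Dropping the spurious transition-strip paragraph, your argument goes through unchanged: the only two remainder regimes in Theorem~\ref{thm4.11} are $O((\nu+k)^{-1})$ and $O(\nu^{1/3}(k+1)^{-4/3})$, and checking that the latter is $\le C\mu^{-3/7}$ once $k>\mu^{4/7}$ (using $\nu\lesssim\mu$) completes the band estimate exactly as you wrote.
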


The Dirichlet case has already been proved in \cite[Theorem 3.2]{Guo}. The proof for the Neumann case follows a similar pattern. Below, we will outline the proof and highlight the key differences. We may assume $R=1$ without loss of generality.

In the Neumann case, we need to study positive zeros of
\begin{equation*}
	j_{\nu,\delta}'(x)=x^{-\delta}\left( J_{\nu}'(x)-\delta x^{-1}J_{\nu}(x) \right) \quad \textrm{with $\delta\in\mathbb{R}$ and $\nu\geq |\delta|$}.
\end{equation*}
The term $\delta x^{-1}J_{\nu}(x)$ can be absorbed into error terms of the asymptotics of  $J_{\nu}'(x)$. To verify this, particularly when $\nu<x<(1+c)\nu$ for any sufficiently small $c>0$, we need to use $xg(\nu/x)\asymp \nu^{-1/2}(x-\nu)^{3/2}$ by \eqref{bound-zeta+}.

\begin{lemma}
	For any $c>0$ and $\nu\ge 0$, if $x\geq \max\{(1+c)\nu, 10\}$ then
	\begin{equation*}
		j_{\nu,\delta}'(x)=-\left(\frac{2}{\pi}\right)^{1/2} \frac{\left(x^2-\nu^2\right)^{1/4}}{x^{1+\delta}} \left(\sin\left( \pi
		xg\left(\frac{\nu}{x}\right)-\frac{\pi}{4}\right)+O_c\left(x^{-1}\right)\right).
	\end{equation*}

For any sufficiently small $c>0$ and sufficiently large $\nu$, if $\nu<x<(1+c)\nu$ then
\begin{equation*}
	j_{\nu,\delta}'(x)=\frac{-2^{2/3}}{(3\pi)^{1/6}}\frac{\left(x^2-\nu^2\right)^{1/4}}{x^{1+\delta}\left(x g\left(\nu/x \right)\right)^{1/6}}
	\left(\!\mathrm{Ai}'\left(\!-\left(\frac{3\pi}{2} x g\left(\frac{\nu}{x} \right)\right)^{\!\frac{2}{3}}\right)+O\left(\nu^{-\frac{2}{3}}\right)\!\right)
\end{equation*}
when $xg(\nu/x)\leq 1$, and
	\begin{equation*}
		j_{\nu,\delta}'(x)=-\left(\frac{2}{\pi}\right)^{\!1/2} \frac{\left(x^2-\nu^2\right)^{1/4}}{x^{1+\delta}} \left(\sin\left( \pi
		xg\left(\frac{\nu}{x}\right)-\frac{\pi}{4}\right)+O\left(\!\left(xg\left(\frac{\nu}{x}\right)\right)^{\!-1}\right)\!\right)
	\end{equation*}
when $xg(\nu/x)>1$.
\end{lemma}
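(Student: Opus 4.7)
The plan is to write $j'_{\nu,\delta}(x)=x^{-\delta}\bigl(J'_\nu(x)-\delta x^{-1}J_\nu(x)\bigr)$, apply in each of the three regimes the standard asymptotic of $J'_\nu(x)$ (Lemma \ref{app-1} for the first regime, and the Olver-type uniform Airy expansion underlying Lemmas \ref{case3} and \ref{case2.5} for the second), and in each case absorb the auxiliary term $\delta x^{-1}J_\nu(x)$ into the error term already present in the expansion of $J'_\nu(x)$. This is precisely the strategy flagged in the paragraph preceding the lemma.

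In the first regime $x\ge\max\{(1+c)\nu,10\}$, one has $x^2-\nu^2\gtrsim_c x^2$ and $|J_\nu(x)|\lesssim (x^2-\nu^2)^{-1/4}$ from Lemma \ref{app-1}, so
\[
\frac{|\delta x^{-1}J_\nu(x)|}{(x^2-\nu^2)^{1/4}/x}
\lesssim_{c,\delta}\frac{1}{(x^2-\nu^2)^{1/2}}
\lesssim_c x^{-1},
\]
which is absorbed into the $O_c(x^{-1})$ error inside the parentheses; factoring out $x^{-\delta}$ gives the first assertion.

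In the second regime $\nu<x<(1+c)\nu$, the relation $xg(\nu/x)\asymp\nu^{-1/2}(x-\nu)^{3/2}$ from \eqref{bound-zeta+} is the unifying calibration: it yields $x-\nu\lesssim\nu^{1/3}$, $x^2-\nu^2\asymp\nu(x-\nu)$, and the identity $(x^2-\nu^2)^{1/4}\asymp\nu^{1/3}(xg(\nu/x))^{1/6}$. When $xg(\nu/x)\le 1$, the uniform Airy expansion of $J'_\nu$ produces the main $\mathrm{Ai}'$-term with prefactor of size $(x^2-\nu^2)^{1/4}/[x(xg(\nu/x))^{1/6}]$ and relative error $O(\nu^{-2/3})$, while the parallel expansion of $J_\nu$ gives $|J_\nu(x)|\lesssim (x^2-\nu^2)^{-1/4}(xg(\nu/x))^{1/6}$. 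A direct computation using the calibration shows that the ratio of $|\delta x^{-1}J_\nu(x)|$ to the size of the main term is $\asymp (xg(\nu/x))^{1/3}/(x^2-\nu^2)^{1/2}\asymp\nu^{-2/3}$, and so is absorbed. When $xg(\nu/x)>1$, the sine-type expansion of $J'_\nu$ applies with error $O((xg(\nu/x))^{-1})$; since $g\le g(0)=1/\pi$ is bounded, one has $x^{-1}\lesssim (xg(\nu/x))^{-1}$, and the same kind of ratio check shows $\delta x^{-1}J_\nu(x)$ is absorbed into this error.

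The main obstacle is precisely the bookkeeping in the transition subcase $xg(\nu/x)\le 1$: both $J_\nu$ and $J'_\nu$ exhibit non-oscillatory sizes of comparable order there, so the absorption is not automatic. It rests on the calibration $(x^2-\nu^2)^{1/4}\asymp\nu^{1/3}(xg(\nu/x))^{1/6}$ that is a consequence of \eqref{bound-zeta+}. Once this matching is in place, the three claimed formulas follow by substitution and elementary algebra consistent with the sign and normalization conventions used throughout Section \ref{zeros}.
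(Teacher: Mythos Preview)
Your proposal is correct and follows exactly the approach the paper indicates: the paper does not give a detailed proof but states in the paragraph immediately preceding the lemma that ``the term $\delta x^{-1}J_{\nu}(x)$ can be absorbed into error terms of the asymptotics of $J_{\nu}'(x)$,'' singling out the calibration $xg(\nu/x)\asymp\nu^{-1/2}(x-\nu)^{3/2}$ from \eqref{bound-zeta+} as the key input in the transition range. Your ratio checks in each regime carry this out faithfully.
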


We denote positive zeros of $j_{\nu,\delta}'$  by $a'_{\nu, \delta, k}$ (with the convention of beginning with $k=0$ if $\nu>|\delta|$ and with $k=1$ if $\nu=|\delta|$). By the way, $j_{\nu,\delta}'$ has only real zeros when $\nu\geq |\delta|$ since  $zJ_{\nu}'(z)-\delta J_{\nu}(z)$ has only real zeros (see  Watson \cite[P. 482]{watson:1966}), and all its nonzero zeros are simple by Dixon's theorem (see Watson \cite[P. 480]{watson:1966}).

\begin{proposition} \label{prop4.9}
There exists a constant $c\in (0,1)$ such that for any $\delta\in\mathbb{R}$ and all sufficiently large $\nu$,
	\begin{equation*}
	a'_{\nu, \delta, k}g\left(\frac{\nu}{a'_{\nu, \delta, k}}\right)=k+\frac{1}{4}+R'_{\nu,k},
	\end{equation*}
where the remainder $R'_{\nu,k}$ satisfies $|R'_{\nu,0}|<1/8$,	  $|R'_{\nu,k}|<1/4$, $k\in \mathbb{N}$ and
		\begin{equation*}
		R'_{\nu,k}=
		\left\{\begin{array}{ll}
			O((\nu+k)^{-1}),   &  \textrm{if $a'_{\nu, \delta, k}\ge (1+c)\nu$,}\\
			O((k+1)^{-1}),    & \textrm{if $\nu<a'_{\nu, \delta, k}< (1+c)\nu$.}
		\end{array}\right.
	\end{equation*}

	For any $\delta\in\mathbb{R}$ and $V\in\mathbb{N}$ there exists a constant $K>0$ such that if $|\delta|\leq \nu\leq V$ and $k\geq K$ then
	\begin{equation*}
		a'_{\nu, \delta, k}g\left(\frac{\nu}{a'_{\nu, \delta, k}}\right)=k+\frac{1}{4}+O\left(\frac{1}{\nu+k}\right).
	\end{equation*}
\end{proposition}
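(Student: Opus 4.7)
The plan is to adapt the arguments used for Propositions~\ref{thm111} and \ref{thm222} to the simpler function $j'_{\nu,\delta}$, working with the scalar quantity $h_\nu(x):=xg(\nu/x)$ in place of $\mathcal{G}_\nu(x)$. Assuming $R=1$ as in the discussion preceding the statement, $h_\nu:[\nu,\infty)\to[0,\infty)$ is continuous and strictly increasing, with $h_\nu(\nu)=0$. For each sufficiently large $\nu$ and each integer $k\geq 0$, I would locate an interval $(a_k,b_k)\subset(\nu,\infty)$ on which $h_\nu$ is a bijection onto $(1/6,3/8)$ when $k=0$, respectively onto $(k+1/8,k+3/8)$ when $k\geq 1$. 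These intervals are pairwise disjoint, and for $k\geq 1$ they lie in the regime where the trigonometric asymptotic of the preceding lemma applies.

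First I would verify the sign change $j'_{\nu,\delta}(a_k)\,j'_{\nu,\delta}(b_k)<0$ on each $(a_k,b_k)$ for sufficiently large $\nu$. When $k\geq 1$, $h_\nu>1$ on $(a_k,b_k)$, so one of the two trigonometric asymptotics applies: the argument $\pi h_\nu(x)-\pi/4$ sweeps across $\pi k$, forcing the sine to change sign, and the $O(x^{-1})$ or $O(h_\nu(x)^{-1})$ error is dominated by the resulting $\sin(\pm\pi/8)$. When $k=0$ I would use the Airy-form asymptotic: the first positive zero $t_1$ of $\mathrm{Ai}'(-\cdot)$ lies strictly inside the interval $((3\pi/4)^{2/3},(9\pi/16)^{2/3})$, so $\mathrm{Ai}'(-(3\pi h_\nu/2)^{2/3})$ takes opposite signs at $a_0$ and $b_0$, and the $O(\nu^{-2/3})$ error becomes negligible for large $\nu$. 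This gives at least one zero of $j'_{\nu,\delta}$ in each $(a_k,b_k)$.

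Next I would count zeros to conclude that each $(a_k,b_k)$ contains exactly one. Since $j'_{\nu,\delta}(x)=x^{-\delta-1}\bigl(xJ'_\nu(x)-\delta J_\nu(x)\bigr)$, this reduces to counting zeros of $xJ'_\nu(x)-\delta J_\nu(x)$, which are real and simple for $\nu\geq|\delta|$ by Dixon's theorem and the results quoted on pp.~480 and 482 of Watson. A Hankel-expansion argument on a large circle, analogous to (but substantially simpler than) the proof of Theorem~\ref{thm333}, then yields the exact number of zeros inside and matches the number of disjoint intervals $(a_k,b_k)$ it contains. Hence $a'_{\nu,\delta,k}$ is the unique zero in $(a_k,b_k)$.

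From this, the stated error bounds follow quickly. For $k=0$, the bound $|R'_{\nu,0}|<1/8$ is immediate from $h_\nu(a'_{\nu,\delta,0})\in(1/6,3/8)$. For $k\geq 1$, apply arcsine to
\begin{equation*}
\sin\bigl(\pi h_\nu(a'_{\nu,\delta,k})-\pi/4\bigr)+O(\mathrm{err})=0:
\end{equation*}
in the first range, $a'_{\nu,\delta,k}\asymp \nu+k$ converts $O(x^{-1})$ to $O((\nu+k)^{-1})$; in the second range, $h_\nu(a'_{\nu,\delta,k})=k+1/4+O(1)$ converts $O(h_\nu^{-1})$ to $O((k+1)^{-1})$. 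For the bounded-$\nu$ case $|\delta|\leq\nu\leq V$ with $k$ large, the first asymptotic alone applies, and the argument of Proposition~\ref{thm222} transfers essentially verbatim. The main obstacle will be the zero-counting step: as in Theorem~\ref{thm333}, one must either perform a Hankel-expansion computation on a large circle for $xJ'_\nu(x)-\delta J_\nu(x)$ or extract the needed count from the literature. Every other step—interval construction, sign verification, and the passage from the approximate zero equation to the explicit error—is a routine consequence of the preceding asymptotic lemma.
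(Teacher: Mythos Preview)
Your plan is correct and follows the same overall structure as the paper: construct the intervals $(a_k,b_k)$ via $h_\nu$, verify sign changes using the asymptotics of the preceding lemma, match the exact zero count to force uniqueness, and then apply arcsine. Two points differ from the paper's treatment and deserve comment. First, for the zero count the paper does not carry out a Hankel-expansion argument as you propose; instead it invokes Lemmas~3.1 and 3.2 of Filonov--Levitin--Polterovich--Sher~\cite{FLPS:2024}, which directly give the number of zeros of $j'_{\nu,\delta}$ in $(0,\pi(s+\tfrac{1}{2}\nu+\tfrac{1}{2}))$ via the phase function of $j'_{\nu,\delta}+iy'_{\nu,\delta}$. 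Your Hankel approach would work but is heavier. Second, you assert $a'_{\nu,\delta,k}\asymp\nu+k$ in the first range without justification; the paper derives this lower bound from the interlacing of zeros of $j_{\nu,\delta}$ and $j'_{\nu,\delta}$ (Dixon's theorem) combined with McCann's lower bound for the zeros of $J_\nu$. Finally, a minor slip: in the $k=0$ sign-change argument the lower endpoint should be $(\pi/4)^{2/3}$, not $(3\pi/4)^{2/3}$, since $h_\nu(a_0)=1/6$ gives $(3\pi/2)\cdot(1/6)=\pi/4$.
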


The proof of this proposition is merely a repetition of the proofs of Propositions \ref{thm111} and \ref{thm222}, except that it needs the use of the following two facts: if $s\in\mathbb{N}$ is sufficiently large, then in the interval $(0, \pi(s+\frac{1}{2}\nu+\frac{1}{2}))$ the function $j_{\nu,\delta}'(x)$ has $s+1$ or $s$ zeros according to $\nu>|\delta|$ or $\nu=|\delta|$; if $\nu\geq |\delta|$, $k\in\mathbb{N}$ and $\max\{\nu,k\}$ is sufficiently large, then
\begin{equation*}
a'_{\nu, \delta, k}\gtrsim \nu+k.
\end{equation*}
The first fact follows easily from \cite[Lemmas 3.1 and 3.2]{FLPS:2024}. The second one follows from the known lower bound of positive zeros of $J_{\nu}$ (McCann \cite{McCann:1977}) and the fact that $j_{\nu,\delta}$ and $j'_{\nu,\delta}$ have interlacing  positive zeros by Dixon's theorem (see Watson \cite[P. 480]{watson:1966}).

We can then derive the following bounds and approximations of zeros $a'_{\nu, \delta, k}$ like those in  Proposition \ref{prop2} and Theorems \ref{thm2.20} and \ref{approximation}. Proposition \ref{prop4.10} has actually been proved implicitly in the proof of the previous proposition.

\begin{proposition}  \label{prop4.10}
	We have the following bounds for $a'_{\nu, \delta, k}$.
	\begin{enumerate}
		\item If  $\nu\geq|\delta|$ and $k\geq 2$, then
		\begin{equation*}
			a'_{\nu, \delta, k}>\sqrt{\nu^2+\pi^2\left(k-\frac{5}{4} \right)^2}.
		\end{equation*}
		
		\item If $\nu\geq|\delta|$ and  $k$ is sufficiently large, then
		\begin{equation*}
			\pi\left(k+\frac{\nu}{2}-\frac{1}{2}\right)<h^{-1}_\nu\left(k\right)<a'_{\nu, \delta, k}<h^{-1}_\nu\left(k+\frac{3}{8}\right)<\pi\left(k+\frac{\nu}{2}+\frac{1}{2}\right),
		\end{equation*}
		where $h_{\nu}(x)=xg(\nu/x)$.
	\end{enumerate}
\end{proposition}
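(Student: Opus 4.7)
Both parts of Proposition~\ref{prop4.10} are essentially corollaries of material already developed: part (1) from the classical interlacing of Bessel zeros, and part (2) from the localization established in the proof of Proposition~\ref{prop4.9}. The proof splits cleanly along these lines.

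For part (1), my plan is to combine Dixon's interlacing theorem (Watson~\cite[p.\,480]{watson:1966}), applied to the Bessel linear forms $J_\nu$ and $xJ'_\nu-\delta J_\nu$, with McCann's lower bound on Bessel zeros. Since the positive zeros of $j_{\nu,\delta}(x)=x^{-\delta}J_\nu(x)$ coincide with the positive zeros $\mathtt{j}_{\nu,1}<\mathtt{j}_{\nu,2}<\cdots$ of $J_\nu$, Dixon's theorem gives strict interlacing of $\{\mathtt{j}_{\nu,m}\}$ and $\{a'_{\nu,\delta,k}\}$ on $(0,\infty)$. A short case analysis converts this interlacing into the uniform statement $a'_{\nu,\delta,k}>\mathtt{j}_{\nu,k-1}$ valid for all $\nu\geq|\delta|$ and $k\geq 2$: when $\nu>|\delta|$ the order is $a'_{\nu,\delta,0}<\mathtt{j}_{\nu,1}<a'_{\nu,\delta,1}<\mathtt{j}_{\nu,2}<\cdots$, so the inequality is immediate, while for $\nu=|\delta|$ the Bessel recurrences $J'_\nu=J_{\nu-1}-\nu x^{-1}J_\nu=-J_{\nu+1}+\nu x^{-1}J_\nu$ reduce $j'_{\nu,\delta}$ to a multiple of $J_{\nu+1}$ (if $\delta=\nu>0$) or of $J_{\nu-1}$ (if $\delta=-\nu<0$, with the case $\nu=\delta=0$ handled likewise via $j'_{0,0}=-J_1$), and the required inequality then follows from the classical interlacing $\mathtt{j}_{\nu-1,k}<\mathtt{j}_{\nu,k}<\mathtt{j}_{\nu+1,k}$. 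McCann's Corollary~\cite[p.\,102]{McCann:1977}, which gives $\mathtt{j}_{\nu,m}>\sqrt{\nu^2+\pi^2(m-1/4)^2}$ for $m\geq 1$, applied with $m=k-1$, then completes the proof.

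For part (2), the inner pair of inequalities $h_\nu^{-1}(k)<a'_{\nu,\delta,k}<h_\nu^{-1}(k+3/8)$ is already implicit in the localization step of the proof of Proposition~\ref{prop4.9}: for $k$ sufficiently large the zero $a'_{\nu,\delta,k}$ lies in an interval on which $h_\nu$ ranges over $(k,k+3/8)$ (combining the Neumann phase shift $+1/4$ with the asymptotic remainder $R'_{\nu,k}=O((\nu+k)^{-1})$), and strict monotonicity of $h_\nu\colon[\nu,\infty)\to[0,\infty)$ gives the bracketing. For the outer pair of inequalities I would use the explicit form $\pi h_\nu(x)=\sqrt{x^2-\nu^2}-\nu\arccos(\nu/x)$ together with the expansions $\sqrt{x^2-\nu^2}=x+O(\nu^2/x)$ and $\nu\arccos(\nu/x)=\pi\nu/2+O(\nu^2/x)$, valid once $x/\nu$ is bounded away from $1$, to obtain $\pi h_\nu(x)=x-\pi\nu/2+O(\nu^2/x)$. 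Substituting $x=\pi(k+\nu/2\mp 1/2)$ and using that $k$ large enough dominates the $O(\nu^2/x)$ term yields $h_\nu(\pi(k+\nu/2-1/2))<k$ and $h_\nu(\pi(k+\nu/2+1/2))>k+3/8$, which translate through $h_\nu^{-1}$ into the two outer inequalities.

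The main obstacle will be the $\nu=|\delta|$ subcase of part (1): the convention that the index of $a'_{\nu,\delta,k}$ starts at $k=1$ rather than $k=0$ means that Dixon's interlacing alone does not place $a'_{\nu,\delta,1}$ correctly relative to $\mathtt{j}_{\nu,1}$, and one must instead reduce $j'_{\nu,\delta}$ explicitly to $\pm J_{\nu\pm 1}$ as above and invoke the interlacing of zeros of Bessel functions of consecutive order. All other steps --- Dixon's theorem in the generic range, the asymptotic expansion of $h_\nu$, and the application of McCann's inequality --- are essentially routine once the reduction is in hand.
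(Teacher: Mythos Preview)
Your proposal is correct and follows the same approach as the paper, which simply notes that Proposition~\ref{prop4.10} ``has actually been proved implicitly in the proof of the previous proposition'': part~(1) via the interlacing of zeros of $j_{\nu,\delta}$ and $j'_{\nu,\delta}$ (Dixon) combined with McCann's lower bound on $\mathtt{j}_{\nu,k-1}$, and part~(2) via the localization intervals from the proof of Proposition~\ref{prop4.9} together with the expansion $\pi h_\nu(x)=x-\pi\nu/2+O(\nu^2/x)$. Your explicit treatment of the boundary case $\nu=|\delta|$ --- reducing $j'_{\nu,\delta}$ to $J_{\nu\pm 1}$ via the recurrence relations and then invoking the interlacing $\mathtt{j}_{\nu-1,k}>\mathtt{j}_{\nu,k-1}$ --- fills in a detail the paper leaves implicit, and is the correct way to handle the shifted indexing there.
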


McMahon gave an asymptotics of $a'_{\nu, \delta, k}$ when $\nu=n+\frac{1}{2}$ and $\delta=\frac{1}{2}$ with $n$ fixed and $k$ large (see \cite[P. 441]{abram:1972}). But we could not find in the literature any generalization of McMahon's expansion  of $a'_{\nu, \delta, k}$ for other values of $\nu$ and $\delta$. Here we give such a generalization, a one-term asymptotics. It follows directly from a Taylor expansion of the function $g$ at $0$ and Proposition \ref{prop4.9}.

\begin{theorem}\label{thm4.12}
	For any fixed $\nu\geq |\delta|$ and sufficiently large $k$, we have
	\begin{equation*}
		a'_{\nu, \delta, k}=\pi \left(k+\frac{1}{2}\nu+\frac{1}{4} \right)+O_{\nu}\left(\frac{1}{k} \right).
	\end{equation*}
\end{theorem}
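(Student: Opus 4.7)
The plan is to combine the one-term asymptotic expansion of the function $g$ near $0$ with the implicit equation for $a'_{\nu,\delta,k}$ provided by Proposition \ref{prop4.9}. Since $\nu$ is fixed, the regime $k\to\infty$ forces $a'_{\nu,\delta,k}\to\infty$, so $\nu/a'_{\nu,\delta,k}\to 0$ and a Taylor expansion of $g$ at the origin is legitimate.

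First, I would invoke the second part of Proposition \ref{prop4.9}: for fixed $\nu\geq|\delta|$ and $k$ large enough,
\begin{equation*}
a'_{\nu,\delta,k}\,g\!\left(\frac{\nu}{a'_{\nu,\delta,k}}\right)=k+\frac{1}{4}+O_{\nu}\!\left(\frac{1}{k}\right).
\end{equation*}
Next, I would use the lower bound from Proposition \ref{prop4.10}(1), which gives $a'_{\nu,\delta,k}\gtrsim k$, together with the trivial upper bound (say from part (2) of the same proposition), so that for fixed $\nu$ one has $a'_{\nu,\delta,k}\asymp_{\nu} k$. In particular, $\nu/a'_{\nu,\delta,k}=O_\nu(1/k)$ is small.

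Now I would Taylor expand $g(x)=\pi^{-1}(\sqrt{1-x^2}-x\arccos x)$ at $x=0$. Using $\arccos x=\pi/2-x+O(x^3)$ and $\sqrt{1-x^2}=1-x^2/2+O(x^4)$, a direct computation gives
\begin{equation*}
g(x)=\frac{1}{\pi}-\frac{x}{2}+O(x^2)\qquad\text{as }x\to 0.
\end{equation*}
Setting $a=a'_{\nu,\delta,k}$ and multiplying by $a$ with $x=\nu/a$ yields
\begin{equation*}
a\,g(\nu/a)=\frac{a}{\pi}-\frac{\nu}{2}+O_\nu\!\left(\frac{1}{a}\right)=\frac{a}{\pi}-\frac{\nu}{2}+O_\nu\!\left(\frac{1}{k}\right),
\end{equation*}
where the last equality uses $a\asymp_{\nu} k$. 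Combining with the identity from the first step and solving for $a$ gives
\begin{equation*}
a'_{\nu,\delta,k}=\pi\!\left(k+\frac{\nu}{2}+\frac{1}{4}\right)+O_\nu\!\left(\frac{1}{k}\right),
\end{equation*}
as claimed.

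There is essentially no serious obstacle here; the only point requiring attention is the bookkeeping of the error term, namely that $\nu^2/a=O_\nu(1/k)$ (which follows from $a\gtrsim k$) so that the quadratic remainder from the Taylor expansion of $g$ is absorbed into the already present $O_\nu(1/k)$ term coming from Proposition \ref{prop4.9}. Should one wish to upgrade this to a McMahon-type asymptotic expansion with more terms (cf.\ Remark \ref{remark4.13}), one would expand $g$ to higher order and iterate, but that lies outside the scope of the one-term statement at hand.
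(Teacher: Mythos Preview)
Your proof is correct and follows the same approach as the paper, which states that the result ``follows directly from a Taylor expansion of the function $g$ at $0$ and Proposition \ref{prop4.9}.'' Your explicit invocation of Proposition \ref{prop4.10} to pin down $a'_{\nu,\delta,k}\asymp_\nu k$ is a fine way to justify that $\nu/a'_{\nu,\delta,k}\to 0$ and to control the quadratic remainder; this step is implicit in the paper's one-line description.
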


\begin{remark}\label{remark4.13}
	In fact, we believe that a slight modification of the argument in this paper can give us a McMahon-type asymptotics of $a'_{\nu, \delta, k}$ rather than just a one-term asymptotics.
\end{remark}

Theorem \ref{thm4.11} follows from the mean value theorem and Proposition \ref{prop4.9}.

\begin{theorem} \label{thm4.11}
Let $F_g$ be the Minkowski functional of the graph of $g$. There exists a small constant $c>0$ and an integer $V>|\delta|$ such that if $\nu>V$ then
	\begin{equation}
a'_{\nu, \delta, k}=F_g(\nu,k+1/4)+\mathfrak{R}'_{\nu,k},\label{rem}
	\end{equation}
	where
	\begin{equation*}
		\mathfrak{R}'_{\nu,k}=\left\{
		\begin{array}{ll}
			O\left((\nu+k)^{-1}\right),                                         & \textrm{if $a'_{\nu, \delta, k}\geq (1+c)\nu$,}\\
			O\left(\nu^{1/3}(k+1)^{-4/3}\right),                                   &\textrm{if $a'_{\nu, \delta, k}<(1+c)\nu$.}
		\end{array}
		\right.
	\end{equation*}
If $|\delta|\leq \nu\leq V$ and $k$ is sufficiently large then \eqref{rem} holds with
	\begin{equation*}
		\mathfrak{R}'_{\nu,k}=O\left((\nu+k)^{-1}\right).
	\end{equation*}
\end{theorem}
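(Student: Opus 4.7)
The plan is to recognize $F_g(\nu, k+1/4)$ as the exact solution of the equation determining $a'_{\nu,\delta,k}$, and then invert Proposition \ref{prop4.9} via the mean value theorem. Let $h_\nu(x) := xg(\nu/x)$ on $x \ge \nu$. Since $F_g$ is the Minkowski functional of the graph of $g$ and is homogeneous of degree one, the definition $F_g(\nu, k+1/4) = \mu$ is equivalent to $(\nu/\mu, (k+1/4)/\mu)$ lying on the graph of $g$, which is to say $h_\nu(\mu) = k + 1/4$. Thus $F_g(\nu, k+1/4) = h_\nu^{-1}(k+1/4)$, while Proposition \ref{prop4.9} gives $h_\nu(a'_{\nu,\delta,k}) = k + 1/4 + R'_{\nu,k}$. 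Applying the mean value theorem to $h_\nu^{-1}$ on the segment joining $k+1/4$ and $k+1/4 + R'_{\nu,k}$ yields
\begin{equation*}
a'_{\nu,\delta,k} - F_g(\nu,k+1/4) = (h_\nu^{-1})'(\xi_{\nu,k}) \cdot R'_{\nu,k}
\end{equation*}
for some intermediate value $\xi_{\nu,k}$.

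Next, I would compute $h_\nu'$ explicitly. Using $g'(t) = -\arccos(t)/\pi$, one gets
\begin{equation*}
h_\nu'(x) = g(\nu/x) + (\nu/x)\arccos(\nu/x)/\pi = \sqrt{x^2 - \nu^2}/(\pi x),
\end{equation*}
so $(h_\nu^{-1})'(y) = \pi x/\sqrt{x^2 - \nu^2}$ at $x = h_\nu^{-1}(y)$. Since the bound $|R'_{\nu,k}| < 1/4$ from Proposition \ref{prop4.9} forces $\xi_{\nu,k}$ to lie in a bounded neighborhood of $k+1/4$, and since $h_\nu^{-1}$ is monotone, both $h_\nu^{-1}(k+1/4)$ and the intermediate point $x = h_\nu^{-1}(\xi_{\nu,k})$ are of the same order as $a'_{\nu,\delta,k}$. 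Consequently, estimating $(h_\nu^{-1})'(\xi_{\nu,k})$ reduces to estimating $x/\sqrt{x^2 - \nu^2}$ for $x \asymp a'_{\nu,\delta,k}$ (and in the bounded $\nu$, large $k$ regime, the analogue at the corresponding zeros).

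In the regime $a'_{\nu,\delta,k} \ge (1+c)\nu$, the quantity $\sqrt{x^2-\nu^2}/x$ is bounded below by a constant depending only on $c$, so $(h_\nu^{-1})'(\xi_{\nu,k}) = O(1)$. Combining with $R'_{\nu,k} = O((\nu+k)^{-1})$ yields $\mathfrak{R}'_{\nu,k} = O((\nu+k)^{-1})$, and the same argument gives the final statement for $|\delta| \le \nu \le V$ once $k$ is large enough (since then $a'_{\nu,\delta,k}$ lies well beyond $\nu$ by Proposition \ref{prop4.10}). In the regime $\nu < a'_{\nu,\delta,k} < (1+c)\nu$, the derivative $h_\nu'$ becomes small; here I would use the local expansion $g(1-s) \sim \frac{2\sqrt{2}}{3\pi} s^{3/2}$ near $t = 1$ (equivalently, \eqref{bound-zeta+}) to obtain $h_\nu(x) \asymp \nu^{-1/2}(x-\nu)^{3/2}$ and $h_\nu'(x) \asymp \nu^{-1/2}(x-\nu)^{1/2}$. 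From $h_\nu(a'_{\nu,\delta,k}) = k + 1/4 + O((k+1)^{-1})$ and the asymptotic $h_\nu(x) \asymp \nu^{-1/2}(x-\nu)^{3/2}$, one deduces $(x-\nu) \asymp \nu^{1/3}(k+1)^{2/3}$, whence $(h_\nu^{-1})'(\xi_{\nu,k}) \asymp (\nu/(k+1))^{1/3}$. Multiplying by $R'_{\nu,k} = O((k+1)^{-1})$ gives the claimed bound $\mathfrak{R}'_{\nu,k} = O(\nu^{1/3}(k+1)^{-4/3})$.

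The main obstacle will be the second regime, where one must confirm that the mean-value-theorem intermediate point $\xi_{\nu,k}$ stays at a safe distance from the boundary value $k = 0$ (equivalently, $x = \nu$) where $h_\nu'$ vanishes. The bound $|R'_{\nu,0}| < 1/8$ in Proposition \ref{prop4.9} prevents $\xi_{\nu,0}$ from reaching $0$, and together with the monotonicity of $h_\nu$ this ensures that $h_\nu^{-1}(\xi_{\nu,k})$ is comparable to $a'_{\nu,\delta,k}$ uniformly in the relevant range; once this is checked, the argument above is routine.
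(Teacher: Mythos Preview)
Your proposal is correct and follows essentially the same route as the paper: the paper states only that Theorem~\ref{thm4.11} ``follows from the mean value theorem and Proposition~\ref{prop4.9}'', and in the analogous proof of Theorem~\ref{approximation} it applies the mean value theorem to $F$ in its second variable together with the derivative bounds of \cite[Lemma~2.13]{GMWW:2019}. Your version is a slight variant in presentation---you compute $h_\nu'(x)=\sqrt{x^2-\nu^2}/(\pi x)$ directly and extract the $\nu^{1/3}(k+1)^{-1/3}$ size of $(h_\nu^{-1})'$ from the local expansion $h_\nu(x)\asymp \nu^{-1/2}(x-\nu)^{3/2}$, rather than quoting the external lemma---but this is equivalent, since $\partial_y F_g(\nu,\cdot)=(h_\nu^{-1})'(\cdot)$.
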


The spectrum under consideration consists of squares of
\begin{equation*}
\omega'_{n,k}:=a'_{\nu,\delta,k}, \quad n,k\in \mathbb{Z}_+,
\end{equation*}
with $\nu=n+\frac{d}{2}-1$, $\delta=\frac{d}{2}-1$, and an additional definition $\omega'_{0,0}:=0$. For each pair $(n,k)$ the number $(\omega'_{n,k})^2$ appears $m_n^d$ times in the spectrum. Repeating the argument in \cite[Section 3]{Guo} gives the Neumann case of Proposition \ref{prop4.7}.


\section{Lattice counting and Weyl's law}\label{sec5}

\subsection{Weyl's law for shells}\label{sec5.1}

The main task in this subsection is to study two associated lattice point problems, $\mathscr{Q}_{\Omega}^*(\mu)$ and $\mathscr{Q}_{\mathcal{B}_{r\mu}^{*},l}$,  defined in Subsection \ref{subsec4.1}. The shell part of Theorem \ref{specthm} follows immediately from Proposition \ref{difference1} and Theorem \ref{thm Ndu}.

We first study the lattice counting associated with both the domain $\Omega$ (when $r>0$; see Figure \ref{domainOmega}) and the domain $\Omega_0$ (when $r=0$; see Figure \ref{domainOmega0}). In the following lemma,  when $r=0$, as a slight abuse of notation, $\Omega$, $G$ and $\mathscr{Q}_l^{*}(\mu)$  represent $\Omega_0$, $G_0$ and $\mathscr{Q}_{\Omega_0,l}^*(\mu)$ respectively. The results for $\Omega_0$ will be used in Subsection \ref{sec5.2}.

Let $d\geq 2$ be a fixed integer. For $l\in\mathbb{Z}_+$ with $l>2-d$, we define
\begin{equation*}
(\mu\Omega)_l=\{(x,y)\in\mu\Omega: x\ge l+(d-3)/2\}
 \end{equation*}
to be a subset of $\mu\Omega$ in $\mathbb{R}^2$; when $d=2$ and $l=0$, we define
$(\mu\Omega)_0$ to be the union of $\mu\Omega$ and a rectangle $[-1/2,0]\times[0,\mu G(0)]$. Then $\mathscr{Q}_l^{\mathtt{D}}(\mu)$ and $\mathscr{Q}_l^{\mathtt{N}}(\mu)$ are the number of points $(\nu,k-1/4)$ and $(\nu,k+1/4)$ in $(\mu\Omega)_l$, respectively.

\begin{lemma}\label{theorem:no-in-D}
For $0\leq r < R$ and $d\ge 2$, we have
\begin{equation}\label{final-statmentNC}
\mathscr{Q}_l^{*}(\mu)=
\left|(\mu\Omega)_l\right|+\left(c_*- \frac{1}{2}\right)\left(R\mu-l-\frac{d-3}{2}\right)+O\left(\mu^{2/3}\right)
\end{equation}
with $0\le l<R\mu-d/2+1$, $*\in\{ \mathtt{D},\mathtt{N}\}$, $c_{\mathtt{D}}=1/4$ and $c_{\mathtt{N}}=3/4$.
If either $l\ge  r\mu-d/2+1$ or the boundary curve of $\Omega$ has  a tangent in $J$ with rational slope (i.e. $\pi^{-1}\arccos(r/R)\in \mathbb{Q}$), the remainder estimate can be improved to
\begin{equation*}
O_{\epsilon}\left(\mu^{2\theta^*+\epsilon}\right).
\end{equation*}
\end{lemma}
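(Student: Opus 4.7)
The plan is to reduce the weighted lattice count to a rounding error sum, extract the area and linear boundary terms, and bound the residual sum by exponential sum methods.

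First, I would observe that for each admissible $\nu = n + (d-2)/2$ with integer $n \geq l$ and $\nu \leq R\mu$, the number of admissible $k$'s with $(\nu, k \pm 1/4) \in \mu\Omega$ equals $\lfloor \mu G(\nu/\mu) + 1/4 \rfloor$ in the Dirichlet case and $\lfloor \mu G(\nu/\mu) - 1/4 \rfloor + 1$ in the Neumann case. Introducing the sawtooth $\psi(t) := t - \lfloor t \rfloor - 1/2$, this column count becomes $\mu G(\nu/\mu) + (c_* - 1/2) - \psi(\mu G(\nu/\mu) \pm 1/4)$. Summing over $\nu$ and applying Euler--Maclaurin / the midpoint rule to $\sum_\nu \mu G(\nu/\mu)$ would produce the area $|(\mu\Omega)_l|$ (the half-unit shift in the midpoint rule matching the left endpoint $l + (d-3)/2$) and the linear term $(c_* - 1/2)(R\mu - l - (d-3)/2)$, up to an $O(1)$ error. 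The special convention at $d=2, l=0$ is absorbed by the added rectangle $[-1/2,0]\times[0,\mu G(0)]$.

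For the general $O(\mu^{2/3})$ bound I plan to estimate the residual rounding error sum $\sum_\nu \psi(\mu G(\nu/\mu) \pm 1/4)$ via Vaaler's Fourier approximation of $\psi$, truncated at $H \asymp \mu^{1/3}$. This reduces the task to bounding exponential sums $\sum_\nu e^{2\pi i h \mu G(\nu/\mu)}$ with $1 \leq |h| \leq H$. Since $G''$ is smooth and bounded away from zero on any compact subinterval of $(0,r) \cup (r, R)$, the classical van der Corput AB-process yields $O(\mu^{2/3})$ for each such sum. Near the corner $x = r\mu$ (where $G''$ is unbounded as $x \to r^-$) and near the vertical-tangent endpoint $x = R\mu$ (where $G'$ is unbounded), a dyadic decomposition combined with trivial bounds yields contributions of size $O(\mu^{1/2})$, which are absorbed.

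For the improved $O_\epsilon(\mu^{2\theta^* + \epsilon})$ bound, I would replace van der Corput's estimate by the sharper rounding-error sum bound to be established independently in Section \ref{sec6}, which derives from the Li--Yang exponential sum estimate of \cite{LY2023}. That bound requires smoothness of the phase together with a diophantine condition on the relevant tangent slopes. If $l \geq r\mu - d/2 + 1$, the range of $\nu$ lies entirely in $[r\mu, R\mu]$ where $G = R g(\cdot/R)$ is smooth, so Section \ref{sec6} applies directly (after peeling off a thin strip near $R\mu$ handled by trivial bounds). Otherwise, I would split the range at $\nu = r\mu$ into the two smooth pieces of $G$; the common tangent slope at $J$ equals $-\pi^{-1}\arccos(r/R)$, so the rationality hypothesis supplies the diophantine input required at the split point, and the Section \ref{sec6} bound applies on both pieces.

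The main obstacle will be the behaviour at the corner $x = r\mu$: the second derivative $G''$ has a one-sided blow-up as $x \to r^-$, reflecting the singular curvature of $\partial\mathbb{S}$ at $J$. The rationality of $\pi^{-1}\arccos(r/R)$ is precisely the diophantine condition needed to push the decoupling-based exponential sum bounds up to this corner; when it fails, only the classical $O(\mu^{2/3})$ estimate survives. The secondary issue of the vertical tangent at $x = R\mu$ is routine and resolved by dyadic decomposition, and the $r=0$ case is strictly easier since no corner is present.
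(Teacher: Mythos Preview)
Your extraction of the area and linear terms via vertical-column counting, and your route to the $O(\mu^{2/3})$ bound through van der Corput, are sound and close in spirit to what the paper does on the region $\Omega_2$. The gap is in the improved $O_\epsilon(\mu^{2\theta^*+\epsilon})$ bound. First, a geometric correction: at $x=R$ one has $G'(x)=-\pi^{-1}\arccos(x/R)\to 0$, not $\infty$; it is $G''$ that blows up, so the graph has a \emph{horizontal} tangent at the cusp $(R,0)$. This matters because Theorem~\ref{expo sum} requires the phase in the form $(T/M)F(m/M)$ with $|F^{(j)}|\asymp 1$ for $j=1,2,3$ and $T^{141/328+c}\le M\le T^{1/2}$. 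Summing vertically and dyadically decomposing in the distance $m$ to $R\mu$ forces $N\asymp M^{3/2}\mu^{-1/2}$, hence $T=MN=M^{5/2}\mu^{-1/2}$; the constraint $M\le T^{1/2}$ then becomes $M\ge\mu$, which excludes almost the entire range $\mu^{2\theta^*}\lesssim M\lesssim\mu$ you need. Peeling off a thin strip and applying Theorem~\ref{expo sum} to the rest still fails, since $G'$ (and hence $F'$) vanishes at one end of the remaining interval. The paper's remedy is to count \emph{horizontally} in $\Omega_1=\{y\le G(r)\}$, summing over $k$ and using the inverse $H=(G|_{[r,R]})^{-1}$, which satisfies $H'(y)\asymp y^{-1/3}$; the resulting scaling $T=M^{5/3}\mu^{1/3}$ turns $M\le T^{1/2}$ into $M\le\mu$, valid throughout the dyadic range.

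You have also misread the role of the rationality hypothesis. Theorem~\ref{expo sum} carries no diophantine condition; the assumption $\pi^{-1}\arccos(r/R)\in\mathbb{Q}$ is structural. In $\Omega_2=\{y>G(r)\}$ (present when $r>0$) the same scaling obstruction recurs at the corner $J=(r,G(r))$, where $G''\to -\infty$ as $x\to r^-$. The paper subtracts the triangle $\mathcal{T}$ bounded by the tangent at $J$ and counts the residual region $\Omega_2^*=\mathcal{T}\setminus\Omega_2$ along lines of the rational slope $-a/q=G'(r)$; the lattice $\mathbb{Z}^2+(\tfrac{d}{2}-1,-c)$ fibres cleanly over these lines precisely because $a/q\in\mathbb{Q}$, and the resulting inverse-type function $T(\cdot)$ again has $y^{-1/3}$-type derivatives near the cusp, so Theorem~\ref{expo sum} applies after dyadic decomposition. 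Without this change of counting direction the improved bound is unavailable near $J$, which is exactly why the lemma claims only $O(\mu^{2/3})$ in the irrational case.
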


\begin{remark}
The remainder $O(\mu^{2/3})$ is a consequence of a standard second derivative estimate of van der Corput. The above-mentioned improved estimate follows from Theorem  \ref{expo sum} which requires the boundedness of certain first derivatives. This condition is not satisfied if we count lattice points near to the boundary point $\mu J$ along lines parallel to the axes. However, this obstacle can be avoided if we count them along lines parallel to the tangent in $J$ with rational slope. See also \cite[Section 4]{GMWW:2019}.
\end{remark}

\begin{proof}[Proof of Lemma \ref{theorem:no-in-D}]

For $l\in \mathbb{Z}_+$ and $c\in[0,1)$, we study the following counting function
\begin{equation*}
\mathscr{Q}_l(\mu):=\#\{(\nu,k-c)\in \mu\Omega : n\in \mathbb Z_+, n\ge l, k\in \mathbb N\},
\end{equation*}
that is, we count the number of points $(\nu,k-c)$ in $\mu \Omega$. In particular $\mathscr{Q}_l(\mu)$ with $c=1/4$ (resp., $3/4$) gives $\mathscr{Q}_l^{\mathtt{D}}(\mu)$ (resp., $\mathscr{Q}_l^{\mathtt{N}}(\mu)$). If we define
\begin{align*}
\Omega_1&:=\{(x,y)\in\Omega: y\leq G(r)\},\\
\Omega_2&:=\{(x,y)\in\Omega: y>G(r)\}
\end{align*}
and
\begin{equation}
\mathscr{Q}_{\Omega_i,l}(\mu):=\#\{(\nu,k-c)\in \mu\Omega_i:n\in \mathbb Z_+,n\ge l,k\in \mathbb N\},\label{countfcn3}
\end{equation}
then
\begin{equation*}
\mathscr{Q}_l(\mu)=\mathscr{Q}_{\Omega_1,l}(\mu)+\mathscr{Q}_{\Omega_2,l}(\mu).
\end{equation*}
We do this splitting because the boundary curve (given by the graph of $G$)  consists of two parts when $r>0$. When $r=0$ we have $\Omega_2=\emptyset$ and hence $\mathscr{Q}_l(\mu)=\mathscr{Q}_{\Omega_1,l}(\mu)$.

We claim that
\begin{equation}
\begin{split}
\mathscr{Q}_{\Omega_1,l}(\mu)=&\left|\{(x,y)\in(\mu \Omega)_l :y\leq  G(r)\mu \}\right|-L_{l}\\
    &+\left(c-\frac{1}{2}\right)\left(R\mu-l-\frac{d-3}{2}\right)+
    	O_{\epsilon}\left(\mu^{2\theta^*+\epsilon}\right),
  \end{split}\label{ND1NC}
\end{equation}
where
\begin{equation*}
	\begin{split}
		L_{l}=\left\{\begin{array}{ll}
			0&\text{if $l+d/2-1\geq r\mu$}, \\
			\psi(G(r)\mu+c)\left(r\mu-l-\frac{d-3}{2}\right) &\text{if $l+d/2-1< r\mu$}.
		\end{array}
		\right.
	\end{split}
\end{equation*}
When $r>0$ we also claim that if  $l+d/2-1\geq r\mu$ then $\mathscr{Q}_{\Omega_2,l}(\mu)=0$; if $l+d/2-1< r\mu$ then
\begin{equation}
\mathscr{Q}_{\Omega_2,l}(\mu) =\left|\{(x,y)\in(\mu \Omega)_l :y> G(r)\mu \}\right|+L_{l}+O\left(\mu^{2/3}\right),\label{ND2NC}
\end{equation}
where the remainder can be improved to $O_{\epsilon}(\mu^{2\theta^*+\epsilon})$ in the rational case (i.e. $\pi^{-1}\arccos(r/R)\in \mathbb{Q}$). It is easy to verify results of this lemma based on these claims.

It remains to prove these claims. We first prove \eqref{ND1NC} when $l+d/2-1<r\mu$. In $\mu\Omega_1$ we count lattice points along lines parallel to the $x$-axis. Then
\begin{equation}
\mathscr{Q}_{\Omega_1,l}(\mu)=\sum_{c<k\leq G(r)\mu +c}\left(\left\lfloor\mu H\left(\frac{k-c}{\mu}\right)-\frac{d}{2}+1\right\rfloor-l+1\right), \label{sum3}
\end{equation}
where $H:[0,G(r)]\to[r,R]$ represents the inverse function of $G$ restricted to $[r,R]$. With
\begin{equation*}
\psi(t)=t-1/2-\lfloor t\rfloor
\end{equation*}
the sawtooth function, $\mathscr{Q}_{\Omega_1,l}(\mu)$ is equal to the difference of
\begin{equation}
\sum_{c<k\leq G(r)\mu +c}\left(\mu H\left(\frac{k-c}{\mu}\right)-l-\frac{d-3}{2}\right) \label{sum1}
\end{equation}
and
\begin{equation}
\sum_{c<k\leq G(r)\mu +c}\psi\left(\mu H\left(\frac{k-c}{\mu}\right)-\frac{d}{2}+1\right). \label{sum2}
\end{equation}

Applying the Euler--Maclaurin summation formula to \eqref{sum1} yields
\begin{align*}
\eqref{sum1}=&\int_1^{G(r)\mu+c}\!\!\! \mu H\left(\frac{y-c}{\mu}\right)-l-\frac{d-3}{2} \textrm{d}y+\int_1^{G(r)\mu+c} \!\!\!\psi(y) H'\left(\frac{y-c}{\mu}\right) \textrm{d}y\\
  &+\frac{1}{2}\left(\mu H\left(\frac{1-c}{\mu}\right)-l-\frac{d-3}{2} \right)-\psi\left(G(r)\mu+c\right)\left(r\mu-l-\frac{d-3}{2} \right).
\end{align*}
By the asymptotics
\begin{equation*}
H(y)=R+O\left(y^{2/3}\right) \textrm{ as $y\rightarrow 0+$},
\end{equation*}
the first integral is equal to
\begin{equation*}
\left|\{(x,y)\in(\mu \Omega)_l :y\leq  G(r)\mu \}\right|+(c-1)\left(R\mu-l-\frac{d-3}{2}\right)+O(\mu^{1/3}).
\end{equation*}
By the second mean value theorem and $H'(y)\asymp y^{-1/3}$ (see \cite[Lemma 4.6]{GMWW:2019}) the second integral is of size $O(\mu^{1/3})$. Hence
\begin{align*}
\eqref{sum1}=&\left|\{(x,y)\in(\mu \Omega)_l :y\leq  G(r)\mu \}\right|+\left(c-\frac{1}{2}\right)\left(R\mu-l-\frac{d-3}{2}\right)\\
&-L_l+O(\mu^{1/3}).
\end{align*}

For the sum \eqref{sum2}, the part with $k\leq \mu^{2\theta^*}$ is of size  $O(\mu^{2\theta^*})$ and the rest part is divided into sums of the form
\begin{equation}
\sum_{M<k\leq M'\leq 2M}\psi\left(NF\left(\frac{k}{M}\right)\right),\label{psisumNC}
\end{equation}
where $M=2^j\mu^{2\theta^*}\lesssim\mu$, $N=M^{2/3}\mu^{1/3}$ and
\begin{equation}
F(x)=\left(\frac{\mu}{M}\right)^{2/3}H\left(\frac{M}{\mu} x-\frac{c}{\mu}\right)-\frac{1}{N}\left(\frac{d}{2}-1\right).\label{F1}
\end{equation}
By Theorem \ref{expo sum} with $T=MN$,
\begin{equation*}
	\eqref{psisumNC}\lesssim_{\epsilon} \left(M^{5/3}\mu^{1/3}\right)^{\theta^*+\epsilon}.
\end{equation*}
Summing over $j$ yields
\begin{equation*}
	\eqref{sum2}\lesssim_{\epsilon} \mu^{2\theta^*+2\epsilon}.
\end{equation*}
Combining the above results of \eqref{sum1} and \eqref{sum2} yields \eqref{ND1NC} when $l+d/2-1<r\mu$.

To prove \eqref{ND1NC} for $l+d/2-1\geq r\mu$, we just need to replace the summation domain in \eqref{sum3} by $c<k\leq G(\frac{l+d/2-1}{\mu})\mu+c$ and argue similarly as above. We omit the details.

If $r>0$ we also need to compute $\mathscr{Q}_{\Omega_2,l}(\mu)$. It is obvious that $\mathscr{Q}_{\Omega_2,l}(\mu)=0$ if $l+d/2-1\geq r\mu$. We will assume that $l+d/2-1<r\mu$ below.

We first prove \eqref{ND2NC} with the error term $O(\mu^{2/3})$.  We now count lattice points along lines parallel to the $y$-axis. Then
\begin{align}
	\mathscr{Q}_{\Omega_2,l}(\mu)
	=&\sum_{l\le n< r\mu-d/2+1}\left(\left\lfloor\mu G\left(\frac{n+d/2-1}{\mu}\right)+c\right\rfloor- \left\lfloor G(r)\mu+c\right\rfloor\right)\nonumber\\
	=&\sum_{l\le n< r\mu-d/2+1}\left(\mu G\left(\frac{n+d/2-1}{\mu}\right)-\mu G\left(r\right) \right)\label{sum4}\\
	&-\sum_{l\le n<  r\mu-d/2+1}\psi\left(\mu G\left(\frac{n+d/2-1}{\mu}\right)+c\right)+L_l+O(1).\label{sum5}
\end{align}
Applying the Euler--Maclaurin summation formula to the sum in \eqref{sum4} and the second mean value theorem yields
\begin{equation*}
\eqref{sum4}=\left|\{(x,y)\in(\mu \Omega)_l :y> G(r)\mu \}\right|+O(1).
\end{equation*}
Applying van der Corput's second derivative estimate (see \cite[Satz 5]{corput:1923}) to the sum in \eqref{sum5}  with $f(x)=\mu G(\frac{x+d/2-1}{\mu})+c$ yields the sum is bounded by
\begin{align*}
& \int_{l}^{r\mu-d/2}\left|f''(x)\right|^{\frac{1}{3}}\,\textrm{d}x+\max_{l\leq x\leq r\mu-d/2}|f''(x)|^{-\frac{1}{2}}+1\\
\leq  &  \mu^{2/3}\int_{0}^{r}\left|G''(x)\right|^{\frac{1}{3}}\,\textrm{d}x+\mu^{1/2}\max_{0\leq x\leq r}|G''(x)|^{-\frac{1}{2}}+1\lesssim \mu^{2/3}.
\end{align*}
We therefore get \eqref{ND2NC} easily.

We next prove \eqref{ND2NC} with the error term $O_{\epsilon}(\mu^{2\theta^*+\epsilon})$ in the rational case. Denote the rational slope of the tangent line at $J$ by  $G'(r)=-a/q<0$ with $a$, $q$ positive and relatively prime. Let
\begin{equation*}
\mathcal{T}:=\left\{(x,y)\in\mathbb{R}^2: 0\leq x<r, G(r)< y\leq G(r)+aq^{-1}(r-x)\right\}
\end{equation*}
denote the triangle bounded by $y$-axis, $y=G(r)$ and the tangent line at $J$, and
\begin{equation*}
\Omega^*_2:=\left\{(x,y)\in\mathbb{R}^2: 0\leq x<r,G(x)< y\leq G(r)+aq^{-1}(r-x)\right\}
\end{equation*}
be the domain $\mathcal{T}\setminus \Omega_2$. Thus
\begin{equation}\label{label-ND2NC}
	\mathscr{Q}_{\Omega_2,l}(\mu)=\mathscr{Q}_{\mathcal{T},l}(\mu)-\mathscr{Q}_{\Omega_2^*,l}(\mu),
\end{equation}
where counting functions $\mathscr{Q}_{\mathcal T,l}(\mu)$ and $\mathscr{Q}_{\Omega_2^{*},l}(\mu)$ are both defined by \eqref{countfcn3} with $\Omega_i$ replaced by $\mathcal T$ and $\Omega_2^{*}$ respectively.

Concerning $\mathscr{Q}_{\Omega_2^{*},l}(\mu)$, we count points along lines $l_t$:
\begin{equation*}
a(x-d/2+1)+q(y+c)=t,\quad t\in \mathbb{Z}.
 \end{equation*}
Observe that $l_t$ contains points from the lattice $\mathbb{Z}^2+(d/2-1,-c)$ if and only if $t\in \mathbb{Z}$.  The line $l_t$ intersects the lower boundary curve of $\mu\Omega_2^*$ between endpoints $(l+d/2-1,\mu G((l+d/2-1)/\mu))$ and $(r\mu,G(r)\mu)$ at a unique point if $t\in[\mu q\beta_l,\mu q\gamma]$, where
\begin{equation*}
\beta_l=G\left(\frac{l+d/2-1}{\mu}\right)+\left(c+\frac{a}{q}l\right)\frac{1}{\mu}
\end{equation*}
and
\begin{equation*}
\gamma=G(r)+\frac{a}{q}r+\left(c+\frac{a}{q}\left(1-\frac{d}{2}\right)\right)\frac{1}{\mu}.
\end{equation*}
We denote the $x$-coordinate of the intersection point by $\mu T(t/(\mu q))$ where $T$ is a strictly increasing function from $[\beta_l,\gamma]$ to $[(l+d/2-1)/\mu,r]$. It satisfies the equation
\begin{align}\label{definition-TNC}
G(T(y))+\frac aq T(y)+\left(c+\frac{a}{q}\left(1-\frac{d}{2}\right)\right)\frac{1}{\mu}=y, \qquad y\in[\beta_l,\gamma].
\end{align}
For every $t_0\in\{0,\dots,q-1\}$ let $x_0\in\{0,\dots,q-1\}$ be such that $ax_0\equiv t_0\pmod q$. If $t\equiv t_0\pmod q$ the points $(n+d/2-1,k-c)$ on $l_t$ are those with $n=x_0+qm$, $m\in\mathbb{Z}$. Therefore the number of $(\nu,k-c)$ with $n\ge l$ in $\mu\Omega_2^*\cap l_t$ is equal to the number of integers $m$ such that
\begin{equation*}
\frac{l-x_0}{q}\leq m<\frac{1}{q}\left(\mu T\left(\frac{t}{\mu q}\right)-x_0-\frac{d}{2}+1\right).
\end{equation*}
Thus
\begin{equation*}
\mathscr{Q}_{\Omega_2^*,l}(\mu)=S_1+S_2+S_3
\end{equation*}
with
\begin{equation*}
S_1:=\sum_{\mu q\beta_l<t\leq\mu q \gamma}
\left(\frac{\mu}{q}T\left(\frac{t}{\mu q}\right)-\frac{l+d/2-1}{q}\right),
\end{equation*}
\begin{equation*}
	S_2:=\sum_{t_0=0}^{q-1}\psi\left(\frac{x_0-l}{q}\right)\left(\mu(\beta_l-\gamma)+O(1)\right)
\end{equation*}
and
\begin{equation*}
	S_3:=\sum_{t_0=0}^{q-1}\sum_{\mu  \beta_l-t_0q^{-1}<m\leq\mu  \gamma-t_0q^{-1}}
	\psi\left(-\frac{\mu}{q} T\left(\frac{t_0+qm}{\mu q}\right)+\frac{x_0+d/2-1}{q}\right).
\end{equation*}

Applying the Euler--Maclaurin summation formula and \eqref{definition-TNC} to the sum $S_1$ yields that
\begin{align*}
S_1=&\left|\left\{(x,y)\in \mu \Omega_2^* : x\geq  l+\frac{d}{2}-1\right\}\right|-\frac{\psi(\mu q \gamma)}{q}\left(r\mu-\left(l+\frac{d}{2}-1\right)\right)\\
&+\frac{1}{q^2}\int_{\mu q\beta_l}^{\mu q \gamma}T'\left(\frac{x}{\mu q}\right)\psi(x)\,\mathrm dx.
\end{align*}
The last integral is of order $O(\mu^{1/3})$. Indeed, by splitting it into two parts over $[\mu q\beta_l,\mu q \gamma-1)$ and $[\mu q \gamma-1,\mu q \gamma]$ respectively and using the second mean value theorem and \cite[Lemma 4.7 and (4.15) on P. 33]{GMWW:2019}, we have it bounded by
\begin{equation*}
\sup_{\beta_l\leq y\leq\gamma-1/(\mu q)}\left|T'(y)\right|+\mu\left(T(\gamma)-T\left(\gamma-\frac1{\mu q}\right)\right)\lesssim\mu^{1/3}.
\end{equation*}

Since
\begin{align}\label{complete-psi-sumNC}
\sum_{m=0}^{q-1}\psi\left((x+m)/q\right)=\psi(x),
\end{align}
we have
\begin{equation*}
S_2=\mu(\gamma-\beta_l)/2+O(1).
\end{equation*}

In the inner sum of $S_3$, the part with $\lfloor \mu \gamma\rfloor-\mu^{2\theta^*}<m\leq \mu  \gamma$ is of order $O(\mu^{2\theta^*})$ trivially and the rest part is divided into sums of the form
\begin{equation}
\sum_{\lfloor \mu \gamma\rfloor-2M\leq \lfloor \mu \gamma\rfloor-M'< m\leq \lfloor \mu \gamma\rfloor-M}\psi\left(-\frac{\mu}{q} T\left(\frac{t_0+qm}{\mu q}\right)+\frac{x_0+d/2-1}{q}\right) \label{sum7}
\end{equation}
with $M=2^j\mu^{2\theta^*}\lesssim \mu$. Do a substitution $\widetilde{m}:=\lfloor\mu \gamma\rfloor- m$ and rewrite it as
\begin{equation*}
\eqref{sum7}=\sum_{M\leq \widetilde{m}<M'\leq 2M}\psi\left(NF\left(\frac{\widetilde{m}}{M}\right)\right),
\end{equation*}
where $N=\mu^{1/3}M^{2/3}q^{-1}$ and
\begin{equation}
F(x)=-\left(\frac{\mu}{M}\right)^{2/3}T\left(\gamma-\frac{M}{\mu}x+\frac{c_0}{\mu}\right)+\frac{x_0+d/2-1}{qN}, \quad x\in[1,2], \label{F2}
\end{equation}
with $c_0=t_0/q+\lfloor\gamma\mu\rfloor-\gamma\mu$. Based on the size of derivatives of $T$ (in \cite[Lemma 4.7]{GMWW:2019}), we have $|F^{(j)}(x)|\asymp 1$ for $j=1,2,3$. By Theorem \ref{expo sum} with $T=MN$, we have
\begin{equation*}
\eqref{sum7}\lesssim_{\epsilon} \left(M^{5/3}\mu^{1/3}\right)^{\theta^*+\epsilon}.
\end{equation*}
Summing over $j$ gives
\begin{equation*}
	S_3\lesssim_{\epsilon} \mu^{2\theta^*+2\epsilon}.
\end{equation*}

Combining results of $S_1$, $S_2$ and $S_3$ yields
\begin{equation}
\begin{split}\label{ND2-label2NC}
\mathscr{Q}_{\Omega_2^*,l}(\mu)=&\left|\left\{(x,y)\in \mu \Omega_2^* : x\geq  l+\frac{d}{2}-1\right\}\right|-\frac{r\mu-l}{q}\psi(\mu q \gamma)\\
    &+\frac{\mu}{2}(\gamma-\beta_l)+O_{\epsilon}\left(\mu^{2\theta^*+\epsilon}\right).
\end{split}
\end{equation}

Concerning $\mathscr{Q}_{\mathcal{T},l}(\mu)$ we notice that
\begin{align*}
	\mathscr{Q}_{\mathcal{T},l}(\mu)=&\sum_{l\le n< r\mu-d/2+1}\left(\left\lfloor G(r)\mu-\frac{a}{q}(\nu-r\mu)+c\right\rfloor- \left\lfloor G(r)\mu+c\right\rfloor\right)\\
	=&\sum_{l\leq n<r\mu-d/2+1}\left(\frac{a}{q}\left(r\mu-\nu \right)-\psi\left(\gamma\mu-\frac{a}{q}n\right)\right)+L_l+O(1).
\end{align*}
By the Euler--Maclaurin formula, the first sum is equal to
\begin{equation*}
\left|\left\{(x,y)\in \mu\mathcal{T} : x\geq  l+\frac{d}{2}-1\right\}\right|-\frac{a}{2q}\left(l+\frac{d}{2}-1-r\mu\right)+O(1).
\end{equation*}
By the relation \eqref{complete-psi-sumNC}, the second sum is equal to $(r\mu-l)q^{-1}\psi(\mu q\gamma)+O(1)$. Hence
\begin{equation}
\begin{split}
\mathscr{Q}_{\mathcal{T},l}(\mu)=&\left|\left\{(x,y)\in \mu\mathcal{T} : x\geq  l+\frac{d}{2}-1\right\}\right|-\frac{r\mu-l}{q}\psi(\mu q\gamma)\\
  &-\frac{a}{2q}\left(l+\frac{d}{2}-1-r\mu\right)+L_l+O(1).
\end{split}	\label{sum6}
\end{equation}

By \eqref{label-ND2NC}, \eqref{ND2-label2NC} and \eqref{sum6} we readily get \eqref{ND2NC} with the error term $O_{\epsilon}(\mu^{2\theta^*+\epsilon})$ in the rational case. This finishes the proof of  Lemma \ref{theorem:no-in-D}.
\end{proof}

We next study the lattice counting associated with the band $\mathscr{Q}_{\mathcal{B}_{r\mu}^{*},l}$.

\begin{lemma}\label{no-in-band}
For $r>0$ and $d\geq 2$, we have
\begin{equation*}
	\mathscr{Q}_{\mathcal{B}_{r\mu}^{*},l}=\frac{1}{4}\left(r\mu-l-\frac{d-3}{2}\right)+O\left(\mu^{2/3} \right)    
\end{equation*}
with $0\leq l\leq r\mu-d/2+1$ and $*\in\{ \mathtt{D},\mathtt{N}\}$. If the boundary curve of $\Omega$ has  a tangent in $J$ with rational slope, the remainder estimate can be improved to
\begin{equation*}
	O_{\epsilon}\left(\mu^{2\theta^*+\epsilon}\right).
\end{equation*}
\end{lemma}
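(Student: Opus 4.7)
The plan is to reduce the band count to a one-dimensional sawtooth sum and then estimate that sum by the same machinery used in the proof of Lemma \ref{theorem:no-in-D}. Since the band has vertical width $1/4<1$, each vertical slice $\{x=n+d/2-1\}$ contains at most one lattice point from $\mathcal{B}_{r\mu}^{*}$. Writing $f_\nu:=\mu G(\nu/\mu)$ with $\nu=n+d/2-1$, vertical-slice counting yields
\begin{align*}
\mathscr{Q}_{\mathcal{B}_{r\mu}^{\mathtt D},l} &= \sum_{l\le n<r\mu-d/2+1}\bigl(\lfloor f_\nu+1/4\rfloor-\lfloor f_\nu\rfloor\bigr), \\
\mathscr{Q}_{\mathcal{B}_{r\mu}^{\mathtt N},l} &= \sum_{l\le n<r\mu-d/2+1}\bigl(\lfloor f_\nu\rfloor-\lfloor f_\nu-1/4\rfloor\bigr).
\end{align*}
Expanding $\lfloor x\rfloor=x-1/2-\psi(x)$ extracts the main term $\tfrac14(r\mu-l-(d-3)/2)$ and reduces the lemma to estimating the sawtooth sums
\[
\Sigma_c := \sum_{l\le n<r\mu-d/2+1}\psi(f_\nu+c),\qquad c\in\{-1/4,\,0,\,1/4\}.
\]

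For the unconditional $O(\mu^{2/3})$ bound I would apply van der Corput's second derivative test to each $\Sigma_c$, exactly as in the treatment of $\mathscr{Q}_{\Omega_2,l}(\mu)$ in the proof of Lemma \ref{theorem:no-in-D}. A direct computation gives
\[
G''(x)=\pi^{-1}\bigl(R^{-1}(1-(x/R)^2)^{-1/2}-r^{-1}(1-(x/r)^2)^{-1/2}\bigr),
\]
which is strictly negative on $[0,r]$, bounded away from zero, with $|G''|^{1/3}$ integrable since the only singularity (at $x=r$) is of order $(r-x)^{-1/6}$. The standard bound then delivers $|\Sigma_c|\lesssim\mu^{2/3}$.

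For the improved bound $|\Sigma_c|=O_\epsilon(\mu^{2\theta^*+\epsilon})$ in the rational case, I would use the exact identity
\[
\Sigma_c=\sum_{l\le n<r\mu-d/2+1}(f_\nu+c-1/2)-\widetilde{\mathscr{Q}}_c(\mu),
\]
where $\widetilde{\mathscr{Q}}_c(\mu):=\#\{(\nu,k-c)\in\mu\Omega:l\le n,\,\nu\le r\mu,\,k\in\mathbb{N}\}$ is a restricted lattice count of the form treated by the previous lemma. The entire proof of Lemma \ref{theorem:no-in-D}—the decomposition $\Omega=\Omega_1\cup\Omega_2$, the tangent-triangle analysis of $\Omega_2$ via the auxiliary function $T$, and the dyadic application of Theorem \ref{expo sum}—carries over verbatim to $\widetilde{\mathscr{Q}}_c(\mu)$ for any shift $c\in[0,1)$, yielding in the rational case an asymptotic whose main contributions match term-for-term the Euler--Maclaurin expansion of $\sum(f_\nu+c-1/2)$: the same area integral, the same linear-in-$l$ term, and the same boundary sawtooth $\psi(G(r)\mu+c)(r\mu-l-(d-3)/2)$ at $y=G(r)\mu$. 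Subtracting, these cancel and leave $|\Sigma_c|=O_\epsilon(\mu^{2\theta^*+\epsilon})$.

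The main technical obstacle is verifying the precise matching of boundary contributions in this subtraction: every non-area term generated by the Euler--Maclaurin summation of $\sum(f_\nu+c-1/2)$ (half-weight endpoint values, the $\int\psi G'$ piece controlled via the second mean value theorem, the interaction with the lower endpoint at $n=l$) must appear with the same sign and coefficient in the $\mathscr{Q}_{\Omega_1,l}+\mathscr{Q}_{\Omega_2,l}$ decomposition of $\widetilde{\mathscr{Q}}_c(\mu)$, in particular the $L_l$-type sawtooth $\psi(G(r)\mu+c)(r\mu-l-(d-3)/2)$ arising from the horizontal line $y=G(r)\mu$. This matching is ultimately guaranteed because both sides evaluate the same quantity at the same endpoints, but the bookkeeping is tedious. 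A minor auxiliary point is that Lemma \ref{theorem:no-in-D} is explicitly stated only for $c\in\{1/4,3/4\}$; its proof, however—in particular the construction of $T$ and the dyadic application of Theorem \ref{expo sum}—is insensitive to the specific value of $c\in[0,1)$, so the extension needed here is routine.
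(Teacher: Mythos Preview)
Your proposal is correct and follows essentially the same route as the paper. The paper's version is slightly more streamlined: instead of isolating the sawtooth sums $\Sigma_c$ and then matching their Euler--Maclaurin expansion against a restricted lattice count $\widetilde{\mathscr{Q}}_c(\mu)$, the paper simply observes that the band count equals the number of shifted lattice points in $\mu\Omega_2\cup\mathcal{B}_{r\mu}^{\mathtt D}$ minus the number in $\mu\Omega_2$, each of which is exactly a $\mathscr{Q}_{\Omega_2,l}$-type quantity with the curve $G$ replaced by $G+1/(4\mu)$ in the first case; running the computation of \eqref{ND2NC} twice with shifts differing by $1/4$ and subtracting, the area and $L_l$ boundary terms cancel automatically and the $1/4\,(r\mu-l-(d-3)/2)$ main term drops out, so the ``tedious bookkeeping'' you flagged never actually arises.
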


\begin{proof}
It suffices to find the size of the set
\begin{equation*}
\left\{(\nu,k-c)\in \mathcal{B}_{r\mu}^{\mathtt{D}}:n\in \mathbb{Z}_+,n\ge l, k\in \mathbb N \right\},
\end{equation*}
since by taking $c=0$ and $3/4$ we get $\mathscr{Q}_{\mathcal{B}_{r\mu}^{\mathtt{D}},l}$ and $\mathscr{Q}_{\mathcal{B}_{r\mu}^{\mathtt{N}},l}$ respectively. We just need to count the number of points $(\nu,k-c)$ in $\mu\Omega_2$ and in $\mu\Omega_2\cup \mathcal{B}_{r\mu}^{\mathtt{D}}$, and then find their difference. However, the former number is already given by \eqref{ND2NC}, while the latter number can be easily obtained by repeating the computation of \eqref{ND2NC} in the proof of Lemma \ref{theorem:no-in-D}.
\end{proof}

In the last part of this subsection, we derive an asymptotics of $\mathscr{Q}_{\Omega}(\mu)$ by Lemma \ref{theorem:no-in-D}, based on which and also on Lemma \ref{no-in-band}  and Proposition \ref{difference4} we derive an asymptotics of $\mathscr{P}_{\Omega}(\mu)$. To conclude, the shell part of Theorem \ref{specthm} follows immediately from the asymptotics of $\mathscr{P}_{\Omega}(\mu)$ and Proposition \ref{difference1}.

\begin{theorem}\label{thm Ndu}
For $0<r<R$ and $d\geq 2$, we have
\begin{equation*}
\mathscr{Q}_{\Omega}^*(\mu)=\frac{R^d-r^d}{2^{d}(\Gamma(d/2+1))^{2} }\mu^d\mp\frac{R^{d-1}}{2\cdot(d-1)!}\mu^{d-1}+O\left(\mu^{d-2+\frac{2}{3}}\right)
\end{equation*}
and
\begin{equation*}
	\mathscr{P}_{\Omega}^*\textbf{}(\mu)=\frac{R^d-r^d}{2^{d}(\Gamma(d/2+1))^{2} }\mu^d\mp\frac{R^{d-1}+r^{d-1}}{2\cdot(d-1)!}\mu^{d-1}+O\left(\mu^{d-2+\frac{2}{3}}\right),
\end{equation*}
where we take the sign ``$-$'' (resp., ``$+$'') when $*=\mathtt{D}$ (resp., $*=\mathtt{N}$). If the boundary curve of $\Omega$ has  a tangent in $J$ with rational slope, both remainder estimates can be improved to
\begin{equation*}
	O_{\epsilon}\left(\mu^{d-2+2\theta^*+\epsilon}\right).
\end{equation*}
\end{theorem}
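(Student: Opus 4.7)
The plan is to deduce the asymptotics of $\mathscr{Q}_{\Omega}^{*}(\mu)$ from Lemma \ref{theorem:no-in-D} by summation against the weight $m_l^d-m_{l-1}^d$, and then to deduce that of $\mathscr{P}_{\Omega}^{*}(\mu)$ by combining it with Proposition \ref{difference4} and Lemma \ref{no-in-band}. For the remainder, all summands vanish once $l+d/2-1>R\mu$, and by \eqref{bino-bound} one has the crude bound $\sum_{0\le l\le R\mu}(m_l^d-m_{l-1}^d)=O(\mu^{d-2})$. Multiplying the $O(\mu^{2/3})$ error in Lemma \ref{theorem:no-in-D} by this mass produces the announced $O(\mu^{d-2+2/3})$ total remainder; in the rational-slope case the sharper $O_{\epsilon}(\mu^{2\theta^{*}+\epsilon})$ estimate of Lemma \ref{theorem:no-in-D} is available for every $l$, and leads to $O_{\epsilon}(\mu^{d-2+2\theta^{*}+\epsilon})$.

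For the main terms I would split the contribution of Lemma \ref{theorem:no-in-D} into an area sum $M_{1}:=\sum_{l}(m_l^d-m_{l-1}^d)|(\mu\Omega)_{l}|$ and a boundary sum $M_{2}:=(c_{*}-1/2)\sum_{l}(m_l^d-m_{l-1}^d)\bigl(R\mu-l-(d-3)/2\bigr)$, summed over $0\le l\le L_{0}:=\lfloor R\mu-d/2+1\rfloor$. Writing $A_{l}=\sum_{j=0}^{l}(m_j^d-m_{j-1}^d)=m_{l}^d$, Abel summation together with $f(l)-f(l+1)=1$ for $f(l)=R\mu-l-(d-3)/2$ collapses $M_{2}$ into $(c_{*}-1/2)\bigl[\sum_{l=0}^{L_{0}-1}m_{l}^d+\tfrac12 m_{L_{0}}^d\bigr]$. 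The elementary identity $\sum_{l=0}^{L}m_{l}^d=\binom{L+d-1}{d-1}+\binom{L+d-2}{d-1}\sim 2L^{d-1}/(d-1)!$ then yields $M_{2}=(c_{*}-1/2)\cdot 2(R\mu)^{d-1}/(d-1)!+O(\mu^{d-2})=\mp R^{d-1}\mu^{d-1}/(2(d-1)!)+O(\mu^{d-2})$, since $c_{\mathtt{D}}-1/2=-1/4$ and $c_{\mathtt{N}}-1/2=+1/4$.

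The same Abel summation applied to $M_{1}$, using $|(\mu\Omega)_{l}|-|(\mu\Omega)_{l+1}|=\int_{l+(d-3)/2}^{l+1+(d-3)/2}\mu G(x/\mu)\,dx$, recasts $M_{1}$ as $\sum_{l=0}^{L_{0}-1}m_{l}^d\int_{l+(d-3)/2}^{l+1+(d-3)/2}\mu G(x/\mu)\,dx+O(\mu^{d-1})$. A standard Riemann-sum approximation using $m_{l}^d\sim 2l^{d-2}/(d-2)!$, followed by the rescaling $u=x/\mu$, gives $M_{1}=\frac{2\mu^{d}}{(d-2)!}\int_{0}^{R}G(u)u^{d-2}\,du+O(\mu^{d-1})=\frac{2(R^{d}-r^{d})\mu^{d}}{(d-2)!}\int_{0}^{1}g(v)v^{d-2}\,dv+O(\mu^{d-1})$, the last equality following from the definition of $G$ and a rescaling in each of the two intervals $[0,r]$ and $[r,R]$. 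A direct integration by parts together with the Legendre duplication formula $\Gamma((d-1)/2)\Gamma(d/2)=\sqrt{\pi}\,(d-2)!/2^{d-2}$ evaluates the $g$-integral to $(d-2)!/(2^{d+1}(\Gamma(d/2+1))^{2})$, so that $M_{1}=\frac{R^{d}-r^{d}}{2^{d}(\Gamma(d/2+1))^{2}}\mu^{d}+O(\mu^{d-1})$, equivalently the Euclidean-volume contribution $\omega_{d}(R^{d}-r^{d})\mu^{d}/(2\pi)^{d}$. The exceptional case $d=2$, where $(\mu\Omega)_{0}$ contains an auxiliary rectangle, contributes only $O(\mu)$ and is absorbed. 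Combining $M_{1}$ and $M_{2}$ completes the asymptotics for $\mathscr{Q}_{\Omega}^{*}(\mu)$.

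Finally, for $\mathscr{P}_{\Omega}^{*}(\mu)$, Proposition \ref{difference4} reduces the problem to estimating the band correction $\mp\sum_{l}(m_l^d-m_{l-1}^d)\mathscr{Q}_{\mathcal{B}_{r\mu}^{*},l}$. Substituting Lemma \ref{no-in-band} and repeating the Abel-summation argument used for $M_{2}$, verbatim but with $R\mu$ replaced by $r\mu$ and $c_{*}-1/2$ replaced by $1/4$, evaluates this correction to $\mp r^{d-1}\mu^{d-1}/(2(d-1)!)$ plus acceptable error. Added to the asymptotics of $\mathscr{Q}_{\Omega}^{*}(\mu)$, this produces the coefficient $\mp(R^{d-1}+r^{d-1})/(2(d-1)!)$ for the $\mu^{d-1}$ term of $\mathscr{P}_{\Omega}^{*}(\mu)$. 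The main obstacle is purely bookkeeping: tracking boundary terms carefully through the two Abel-summation steps and verifying that the Riemann-sum approximation of the weighted lattice count produces exactly the coefficient $2/(d-1)!$, which combined with the Legendre duplication identity delivers the announced constant $1/(2^{d}(\Gamma(d/2+1))^{2})$ cleanly across all dimensions.
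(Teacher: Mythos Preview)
Your overall strategy matches the paper's: decompose via Lemma \ref{theorem:no-in-D} into a weighted area sum $M_1$ and a weighted boundary sum $M_2$, then pass to $\mathscr{P}_\Omega^*$ through Proposition \ref{difference4} and Lemma \ref{no-in-band}. Your treatment of $M_2$ and of the band correction is correct and essentially coincides with the paper's \eqref{area}.

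The gap is in your estimate of $M_1$. You claim
\[
M_1=\frac{2\mu^{d}}{(d-2)!}\int_{0}^{R}G(u)\,u^{d-2}\,du+O(\mu^{d-1}),
\]
but an error of order $\mu^{d-1}$ is fatal: it is of the same size as the second term you extract from $M_2$, so you cannot conclude that the $\mu^{d-1}$ coefficient of $\mathscr{Q}_\Omega^*$ equals $\mp R^{d-1}/(2(d-1)!)$. The weak error stems from the approximation $m_l^d\sim 2l^{d-2}/(d-2)!$: since $m_l^d-2l^{d-2}/(d-2)!\asymp l^{d-3}$ (the sub-leading term does \emph{not} vanish at this centering), summing it against $\int_{l+(d-3)/2}^{l+(d-1)/2}\mu G(x/\mu)\,dx=O(\mu)$ already costs $O(\mu^{d-1})$.

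The paper obtains the required $M_1=\text{(main)}+O(\mu^{d-2})$ (see \eqref{mainterm}) by working instead with the \emph{centered} polynomial identity
\[
m_l^d-m_{l-1}^d=\frac{2}{(d-3)!}\Bigl(l+\tfrac{d-3}{2}\Bigr)^{d-3}+\mathcal{E}_l,\qquad \mathcal{E}_l=O(l^{d-5})
\]
(equation \eqref{999}; the sub-leading term genuinely vanishes at this specific shift), and then applying Euler--Maclaurin to $\sum_l \frac{2}{(d-3)!}(l+\tfrac{d-3}{2})^{d-3}\int_{l+(d-3)/2}^{R\mu}\mu G(t/\mu)\,dt$ together with the second mean value theorem. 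Your Abel-summation route could in principle be repaired by using the analogous centered expansion $m_l^d=\frac{2}{(d-2)!}(l+\tfrac{d-2}{2})^{d-2}+O(l^{d-4})$ and noting that $l+\tfrac{d-2}{2}$ is exactly the midpoint of your integration interval; but you would still have to verify that the resulting midpoint-rule error remains $O(\mu^{d-2})$ despite the blow-up of $G''$ near $R$. That is not a ``standard Riemann-sum approximation'' and requires precisely the care the paper supplies.
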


\begin{proof}
We claim that the following asymptotics hold
\begin{equation}
		\sum_{0\leq l<R\mu-\frac{d-2}{2}} \left(m_{l}^d-m_{l-1}^d\right)\left|(\mu\Omega)_l\right|=\frac{R^d-r^d}{2^{d}\left(\Gamma\left(d/2+1\right)\right)^{2}} \mu^d+O\left(\mu^{d-2} \right) \label{mainterm}
\end{equation}
and
\begin{equation}
	\sum_{0\leq l< a\mu-\frac{d-2}{2}}\left(m_{l}^d-m_{l-1}^d\right)\left(a\mu-l-\frac{d-3}{2}\right)=\frac{2(a\mu)^{d-1}}{(d-1)!}+O\left( \mu^{d-2}\right)     \label{area}
\end{equation}
with $a>0$. It is easy to check that, for $d\geq 3$ and $l\geq 1$,
\begin{equation}
	m_{l}^d-m_{l-1}^d=\frac{2}{(d-3)!}\left(l+\frac{d-3}{2}\right)^{d-3}+
	\mathcal{E}_l, \label{999}
\end{equation}
where $\mathcal{E}_l=0$ if $3\leq d\leq 5$ and $\mathcal{E}_l=O(l^{d-5})$ if $d\geq 6$, and that $m_{0}^d-m_{-1}^d=1$. We also know that $m_{l}^2-m_{l-1}^2$ equals $1$ if $l=0,1$, and $0$ otherwise. Based on the above claim (with $a=R,r$) and estimates, Lemmas \ref{theorem:no-in-D} and \ref{no-in-band}  and Proposition \ref{difference4}, it is straightforward to verify the desired asymptotics of $\mathscr{Q}_{\Omega}(\mu)$ and $\mathscr{P}_{\Omega}(\mu)$.

We will assume $d\geq 3$ below. When $d=2$ the above claim follows directly from Lemma \ref{theorem:no-in-D} and the fact  $|\Omega|=(R^2-r^2)/8$.

We first verify the asymptotics \eqref{mainterm}.  By \eqref{999} its left side
is equal to
\begin{equation}
\sum_{0<l< R\mu-\frac{d-2}{2}}f(l)+\sum_{0< l< R\mu-\frac{d-2}{2}} \mathcal{E}_l \left|(\mu\Omega)_l\right|+|\Omega|\mu^2+O(\mu), \label{mainterm1}
\end{equation}
where
\begin{equation*}
f(x)=\frac{2}{(d-3)!}\left(x+\frac{d-3}{2}\right)^{d-3}\int_{x+\frac{d-3}{2}}^{R\mu} \mu G\left(\frac{t}{\mu} \right)  \, \textrm{d}t.
\end{equation*}
It is trivial that the second sum in \eqref{mainterm1} is at most $O(\mu^{d-2})$. By the Euler--Maclaurin summation formula, the first sum in \eqref{mainterm1} is equal to
\begin{equation}
\int_{0}^{R\mu-\frac{d-2}{2}} f(t) \, \textrm{d}t+\int_{0}^{ R\mu-\frac{d-2}{2}} \psi(t)f'(t) \, \textrm{d}t-\frac{1}{2}f(0)+O\left(\mu^{d-3}\right).\label{mainterm2}
\end{equation}
 By using integration by parts, a change  of variables and properties of the beta and gamma functions, we get
\begin{align*}
\int_{0}^{R\mu-\frac{d-2}{2}} f(t) \, \textrm{d}t&=\frac{2\mu^d}{(d-2)!}\int_{0}^{R} t^{d-2}G(t)\, \textrm{d}t+O\left(\mu^{d-2} \right) \\
     &=\frac{R^d-r^d}{2^{d}\left(\Gamma\left(d/2+1\right)\right)^{2}} \mu^d+O\left(\mu^{d-2} \right).
\end{align*}
By the second mean value theorem, the second term in \eqref{mainterm2} is of size $O(\mu^{d-2})$. It is also clear that $f(0)/2$ is of size $O(\mu^2)$ if $d\geq 4$ and equal to $|\Omega|\mu^2$ if $d=3$. Plugging these results in \eqref{mainterm1} gives us \eqref{mainterm}.

It remains to verify the asymptotics \eqref{area}. By \eqref{bino-bound}
and summation by parts, the left side of \eqref{area} is equal to
\begin{equation*}
2\sum_{0\leq l\leq \lfloor a\mu-\frac{d+2}{2}\rfloor}\binom{l+d-2}{d-2}+O\left( \mu^{d-2}\right).
\end{equation*}
Simplifying the sum of binomial coefficients reduces this to
\begin{equation*}
2\binom{\lfloor a\mu-\frac{d+2}{2}\rfloor+d-1}{d-1}+O\left( \mu^{d-2}\right)=\frac{2(a\mu)^{d-1}}{(d-1)!}+O\left( \mu^{d-2}\right),
\end{equation*}
as desired.
\end{proof}


\subsection{Weyl's law for balls}\label{sec5.2}

In order to obtain the asymptotics of $\mathscr{N}_\mathbb{B}(\mu)$, based on the results of Subsection \ref{subsec4.2}, we need to study the lattice counting function  $\mathscr{Q}_{\Omega_0}(\mu)$. By Lemma \ref{theorem:no-in-D} with $r=0$, as argued in the proof of Theorem \ref{thm Ndu},  we obtain the following.

\begin{theorem} \label{thm5.5}
For $d\geq 2$, we have
\begin{equation*}
\mathscr{Q}_{\Omega_0}^*(\mu)=\frac{R^d}{2^{d}(\Gamma(d/2+1))^{2} }\mu^d\mp\frac{R^{d-1}}{2\cdot(d-1)!}\mu^{d-1}+O_{\epsilon}\left(\mu^{d-2+2\theta^*+\epsilon}\right),
\end{equation*}
where we take the sign ``$-$'' (resp., ``$+$'') when $*=\mathtt{D}$ (resp., $*=\mathtt{N}$).
\end{theorem}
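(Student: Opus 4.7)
The plan is to imitate the argument for the shell case (Theorem \ref{thm Ndu}), exploiting the simplifications that occur when $r=0$. The starting point is to invoke Lemma \ref{theorem:no-in-D} with $r=0$, so that $\Omega$ is understood as $\Omega_0$, $G$ as $G_0$, and $\mathscr{Q}_l^{*}(\mu)$ as $\mathscr{Q}_{\Omega_0,l}^{*}(\mu)$. Because $r=0$, the condition ``$l\geq r\mu-d/2+1$'' in the second half of Lemma \ref{theorem:no-in-D} is automatic for every $l\in\mathbb{Z}_+$, so the improved remainder applies without any rationality assumption. Consequently, for all $0\leq l<R\mu-d/2+1$,
\begin{equation*}
\mathscr{Q}_{\Omega_0,l}^{*}(\mu)=\bigl|(\mu\Omega_0)_l\bigr|+\left(c_*-\tfrac{1}{2}\right)\!\left(R\mu-l-\tfrac{d-3}{2}\right)+O_{\epsilon}\!\left(\mu^{2\theta^*+\epsilon}\right),
\end{equation*}
with $c_\mathtt{D}=1/4$ and $c_\mathtt{N}=3/4$.

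Next I would multiply both sides by the weight $m_l^d-m_{l-1}^d$ and sum over $0\leq l<R\mu-(d-2)/2$. For the weighted sum of the areas, I would reuse the identity \eqref{mainterm} from the proof of Theorem \ref{thm Ndu}, taking $r=0$ there so that
\begin{equation*}
\sum_{0\leq l<R\mu-\frac{d-2}{2}}\!\!\left(m_{l}^d-m_{l-1}^d\right)\bigl|(\mu\Omega_0)_l\bigr|=\frac{R^d}{2^d\bigl(\Gamma(d/2+1)\bigr)^2}\mu^d+O\!\left(\mu^{d-2}\right).
\end{equation*}
For the linear term, I would apply \eqref{area} with $a=R$ to get
\begin{equation*}
\sum_{0\leq l<R\mu-\frac{d-2}{2}}\!\!\left(m_{l}^d-m_{l-1}^d\right)\!\left(R\mu-l-\tfrac{d-3}{2}\right)=\frac{2(R\mu)^{d-1}}{(d-1)!}+O\!\left(\mu^{d-2}\right),
\end{equation*}
which combined with $c_*-1/2=\mp 1/4$ produces the $\mp R^{d-1}\mu^{d-1}/(2(d-1)!)$ boundary term with the correct sign convention.

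Finally, I would control the accumulated error. Since $m_l^d-m_{l-1}^d=O(l^{d-3}+1)$ by \eqref{bino-bound}, the $O_\epsilon(\mu^{2\theta^*+\epsilon})$ remainder multiplied by these weights and summed over $l\lesssim\mu$ yields a total error bounded by
\begin{equation*}
O_{\epsilon}\!\left(\mu^{2\theta^*+\epsilon}\right)\sum_{0\leq l\leq R\mu}\!\left(l^{d-3}+1\right)=O_{\epsilon}\!\left(\mu^{d-2+2\theta^*+\epsilon}\right),
\end{equation*}
which dominates all the $O(\mu^{d-2})$ terms collected above. Adding the three pieces gives exactly the claimed asymptotic. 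The argument is essentially routine once Lemma \ref{theorem:no-in-D} and the combinatorial identities from Theorem \ref{thm Ndu} are in hand; the only delicate point to double-check is that when $r=0$ the improved remainder is available unconditionally, so the rationality hypothesis that was needed for spherical shells is irrelevant here.
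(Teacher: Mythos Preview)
Your proposal is correct and follows exactly the approach the paper takes: the paper simply states that Theorem \ref{thm5.5} follows from Lemma \ref{theorem:no-in-D} with $r=0$ ``as argued in the proof of Theorem \ref{thm Ndu}'', and you have accurately spelled out those details, including the key observation that the condition $l\geq r\mu-d/2+1$ is automatically satisfied when $r=0$ so that the improved remainder $O_\epsilon(\mu^{2\theta^*+\epsilon})$ is available unconditionally.
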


Finally, the ball part of Theorem \ref{specthm} follows immediately from Theorem \ref{thm5.5} and Proposition \ref{prop4.7}.

\begin{remark}
As a final remark on the Weyl's law for disks, we would like to point out some misunderstandings in  a recent paper \cite{Huxley:2024} by Huxley.
	
On \cite[P. 106]{Huxley:2024}, Huxley wrote ``\textit{A recent paper [6] by Guo, M\"{u}ller, Wang and Wang considers the more difficult problem of the Dirichlet eigenvalues of an annulus for which the ratio $r/R$ of the radii is a rational number... They claim that the estimate for Dirichlet eigenvalues of the disc follows by letting $r$ tend to zero...}''. This is not what Guo, M\"{u}ller, Wang and Wang claimed in their paper. On the contrary, they stated clearly (on \cite[P. 5]{GMWW:2019}, right above Theorem 1.4) that ``\textit{Its rigorous proof relies on the reduction step from the eigenvalue counting to the lattice point counting (see [6, Section 3], [10, Section 6] and its variant in Section 3), Theorem 4.1 (together with the symmetry of the domain D) and...}''. In fact, the focus in Guo, M\"{u}ller, Wang and Wang \cite{GMWW:2019} is the annulus case rather than the disk case whose proof is similar but much simpler. Hence \cite{GMWW:2019} did not provide a detailed proof for the disk case but a one-sentence sketch. A subsequent paper \cite{Guo} (which was not quoted in Huxley \cite{Huxley:2024}) focuses on extending the result of disks to high-dimensional balls for the Dirichlet Laplacian. A detailed proof for the disk case is essentially contained therein. By the way, \cite{GMWW:2019} obtained an improved remainder estimate for annuli under the assumption that $\pi^{-1}\arccos(r/R)\in \mathbb{Q}$ rather than under the assumption $r/R\in\mathbb{Q}$ as Huxley stated.
\end{remark}


\section{Estimates of rounding error sums} \label{sec6}

This section is devoted to estimating sums of rounding errors. By applying the result of Theorem \ref{expo sum} to the analysis in the previous section, we can obtain  the refined bounds presented in this paper.

Recently Li and the last author \cite{LY2023} made a progress on the Gauss circle problem (among other results) by improving Huxley's exponent $131/208=0.6298\cdots$ (in \cite{Huxley:2003}) to $2\theta^*=0.6289\cdots$. Their work contains a new exponential sum estimate \cite[Theorem 4.2]{LY2023}.

Let us explain how the estimate of exponential sums was obtained. It was first observed by Bourgain \cite{Bourgain:2017} that the decoupling theory of harmonic analysis can be brought into the study of the first spacing problem. He derived an essentially sharp $L^{12}$ estimate of an exponential sum whose Fourier support is a curve in $\mathbb R^4$. This exponent $12$ is essentially optimal, indicating the first spacing problem in estimating $|\zeta(\frac{1}{2}+it)|$ cannot be further improved.  Then, in Bourgain and Watt \cite[Section 5]{BW2018}, they showed that the double large sieve inequality can be generalized  by  combining the locally constant property of exponential sums and H\"older's inequality. Next, in \cite{BWpreprint},  the same authors tried to bring this new idea of using the variant double large sieve inequality into the study of the first spacing problem of the circle and divisor problems. However, there was a technical issue with their (3.11)---a positive power of $N$ was missing when they applied the broad-and-narrow dichotomy argument, and thus their result on the first spacing problem in \cite{BWpreprint} does not hold. Subsequently, in \cite{LY2023}, Li and the last author applied Bourgain and Watt's variant double large sieve to generalize the setting from $G_4$ to $G_q$, and obtained a key estimate of $G_q$ with $q$ a little bit larger than $4$ (while in contrast Huxley used a bound of $G_4$ in \cite{Huxley:2003}). They have combined recent advancements of the small cap decoupling theory for cones (made by Guth and Maldague \cite{GM:2022}) with results on some diophantine counting problems to obtain better bounds in the first spacing problem. At last, by combining with Huxley's work on the second spacing problem, they obtained their improved pointwise estimate of two-dimensional exponential sums.

We intended to apply \cite[Theorem 4.2]{LY2023} to the lattice point counting problems encountered in this paper. Unfortunately, the assumption
\begin{equation} \label{ex cond}
	\left| F'(x)F^{(3)}(x)-3F''(x)^2\right| \gtrsim 1 \textrm{ for $x\asymp 1$}
\end{equation}
is not satisfied by the functions $F$ we have in \eqref{F1} and \eqref{F2}.  There is no such problem in \cite{LY2023} because the functions $F$, encountered in the  study of the circle and divisor problems, are variants of the reciprocal function $1/x$ that do satisfy the assumption \eqref{ex cond}. And meanwhile, assuming \eqref{ex cond} helps reduce the number of cases to be discussed. It is noted that Bourgain and Watt also made this assumption in their estimate of exponential sums in \cite{BWpreprint} for the same reason.

To overcome this obstacle, inspired by Huxley's \cite[Proposition 3]{Huxley:2003}, we examined all the details of the proof of \cite[Theorem 4.2]{LY2023} and discovered that, even  without the assumption \eqref{ex cond}, its conclusion still holds within a certain limited range, which suffices  for our needs in this paper. Based on this discovery, the following theorem can be formulated.  Essentially, the proof of this theorem is the same as those of \cite[Theorems 1.2 and 4.2]{LY2023}. For completeness, a sketchy proof will be provided below, with a particular emphasis on clarifying  why the assumption \eqref{ex cond} is unnecessary in the range \eqref{range a}.  For more details, we refer the interested readers to \cite[Section 5]{BWpreprint} and \cite[Sections 4 and 5]{LY2023}).

Let $T,M$ be two large positive parameters and $F$ a real-valued function defined on $[1/2,2]$, three times continuously differentiable,  satisfying $|F^{(j)}(x)|\asymp 1$ for $j=1,2,3$. We have the following bound for sums of rounding errors formed with the sawtooth function $\psi$.

\begin{theorem}  \label{expo sum}
	If $M$ is in the range
	\begin{equation}   \label{range a}
		T^{\frac{141}{328}+c}\le M \le T^{\frac{1}{2}}
	\end{equation}
	for some small absolute constant $c>0$, then for any $\epsilon>0$ we have
	\begin{equation} \label{stf}
		\sum_{m=M}^{M_2} \psi\left( \frac{T}{M} F\left( \frac{m}{M} \right) \right)=O_\epsilon \left(T^{\theta^*+\epsilon}\right),
	\end{equation}
	where $M_2$ is an integer in the range $M\le M_2\le 2M-1$ and $\theta^*=0.314483\cdots$ (as defined in \cite[Definition 1.1]{LY2023}) is the opposite of the unique solution in the interval $[-0.35,-0.3]$ to the equation
	\begin{equation} \label{definition of theta}
		-\frac{8}{25}x-\frac{1}{200}\left(\sqrt{2(1-14x)}-5\sqrt{-1-8x}\right)^2+\frac{51}{200}=-x.
	\end{equation}
\end{theorem}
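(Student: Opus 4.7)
The plan is to follow the proof of \cite[Theorem 4.2]{LY2023} essentially verbatim, verifying at each step that the auxiliary non-degeneracy hypothesis \eqref{ex cond} used in that paper either is not needed or can be bypassed within the restricted range \eqref{range a}. The overall framework is the Bombieri--Iwaniec--Huxley method for sawtooth sums, enhanced by the injection of decoupling theory into the first spacing problem following Bourgain \cite{Bourgain:2017}, Bourgain--Watt \cite{BW2018, BWpreprint} and Li--Yang \cite{LY2023}. First I would apply Vaaler's Fourier approximation to $\psi$, reducing \eqref{stf} to the estimation of exponential sums $S(h)=\sum_{m=M}^{M_2} e(h\tfrac{T}{M}F(m/M))$ averaged over $1\le h\le H$ with $H$ a polynomial cutoff. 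Then I would chop $[M,M_2]$ into short intervals of common length $N\ll M$, approximate $F$ on each by its cubic Taylor polynomial, and rescale so that each short sum becomes an exponential sum whose behaviour is governed by resonance parameters read off from the Taylor coefficients.

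The estimation of $|S(h)|$ then splits into the first spacing problem (counting resonance parameter pairs that lie close together in an appropriate metric) and the second spacing problem (counting well-separated such pairs). For the first spacing problem I would invoke the variant double large sieve inequality of Bourgain--Watt \cite[Section 5]{BW2018} together with the small cap decoupling theorem for cones of Guth--Maldague \cite{GM:2022} for an $L^q$ exponent $q$ slightly greater than $4$, exactly as in \cite[Sections 4--5]{LY2023}; this is what gives the improvement over the exponent in \cite{Huxley:2003}. For the second spacing problem I would invoke Huxley's bound from \cite[Proposition 3]{Huxley:2003}, which rests on Diophantine counting. Optimizing the two estimates against the dyadic parameters $H$ and $N$ produces the critical exponent $\theta^*$ defined by \eqref{definition of theta}.

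The main obstacle is the last step: locating every appearance of \eqref{ex cond} in \cite{LY2023} and neutralizing it within the range \eqref{range a}. In \cite{LY2023} the hypothesis \eqref{ex cond} is used to ensure that the image of $x\mapsto(F'(x),F''(x))$ has non-vanishing geodesic curvature, which in turn guarantees that the Guth--Maldague cone decoupling applies directly to the relevant curved frequency region. Under only $|F^{(j)}|\asymp 1$ for $j=1,2,3$, the quantity $F'F'''-3(F'')^2$ may vanish, so the relevant curve can have inflection points and the cone may degenerate on a small subinterval. My remedy, inspired by \cite[Proposition 3]{Huxley:2003}, would be to split the $x$-range into a small neighborhood of the zero set of $F'F'''-3(F'')^2$ and its complement: on the complement the Li--Yang argument applies without change, while on the exceptional set I would replace the decoupling input by a cruder $L^2$-estimate and verify that the upper constraint $M\le T^{1/2}$ keeps the second spacing contribution tame, and the lower constraint $M\ge T^{141/328+c}$ keeps the first spacing contribution subdominant, so that the total bound still satisfies \eqref{stf} with the exponent $\theta^*$. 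Carrying out this case analysis and the associated bookkeeping is the technical heart of the proof.
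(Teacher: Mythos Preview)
Your overall framework---Fourier expansion of $\psi$, Bombieri--Iwaniec reduction to short sums, then first and second spacing problems with the decoupling input of \cite{GM:2022} via \cite{LY2023}---matches the paper. But you have misidentified where the non-degeneracy condition \eqref{ex cond} actually enters, and this makes your proposed remedy both misdirected and unnecessarily elaborate.

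You claim \eqref{ex cond} is used in the \emph{first} spacing problem to guarantee curvature of the frequency support so that Guth--Maldague decoupling applies. This is not correct. The first spacing problem in \cite{LY2023} is handled by their Proposition~3.1, which concerns the vectors $\vec{y}_{(k,l)}=(k,lk,l\sqrt{k},l/\sqrt{k})$ and is completely independent of $F$; the cone structure comes from the fixed map $(k,l)\mapsto \vec{y}_{(k,l)}$, not from the Taylor coefficients of $F$. The condition \eqref{ex cond} is instead used in the \emph{second} spacing problem, where Huxley's coincidence-counting lemmas (\cite[Lemmas~3.3 and~3.4]{Huxley:2003}) require control on how the resonance parameter $\kappa$ (built from $F'F'''-3(F'')^2$) varies across short intervals.

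The paper's actual observation is much simpler than your proposed inflection-point splitting: Huxley's lemmas only invoke \eqref{ex cond} in the regime $M\gtrsim T^{181/328}(\log T)^{2907/45920}$. Since $181/328>1/2$, the upper bound $M\le T^{1/2}$ in \eqref{range a} keeps $M$ \emph{entirely below} this threshold, so the case requiring \eqref{ex cond} never arises and the Li--Yang argument goes through unchanged. No exceptional-set analysis or $L^2$ fallback is needed; you would not easily recover the sharp exponent $\theta^*$ from a crude $L^2$ bound on an exceptional set in any case.
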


\begin{remark}
	One can compare this result with Huxley's \cite[Case (A) of Proposition 3]{Huxley:2003} with $\kappa=3/10$. If $M$ falls within the range \eqref{range a}, then it lies within the range (1.8) but not within (1.12) of Huxley's Proposition 3. Consequently, the bound  \eqref{stf} offers an improvement over  Huxley's bound (1.25).
\end{remark}

\begin{proof}[Proof of Theorem 6.1.]
	One can follow the approach of handling the sums in \cite[(5.1) and (5.2)]{LY2023} to treat the sum in equation \eqref{stf}. First of all, one may apply to \eqref{stf} the truncated Fourier expansion of the sawtooth function
	\begin{equation*}  
		\psi(t)=\text{Im} \sum_{1\le h\le Y}  \frac{e(ht)}{\pi h}+O\left(\frac{1}{1+\|t\|Y}\right) \,\,  \textrm{with $Y=MT^{-\theta^*}$},
	\end{equation*}
	and then one is left to estimate the exponential sum
	\begin{equation*}
		S=\sum_{h} \phi\left(\frac{h}{H}\right) \sum_{m} \chi\left(\frac{m}{M}\right) e\left(\frac{hT}{M} F\left(\frac{m}{M}\right)\right),
	\end{equation*}
	where $H$ is a dyadic number in $[1,Y]$ and $\phi, \chi$ are simply indicator functions of subintervals of $[1/2,2]$. An application of the Bombieri--Iwaniec method would transform the estimate  to that of a bilinear form
	\begin{equation*}
		\mathop{\sum}\limits_{\substack{L\le l\le 2L \\K\le k\le 2K}} e\left( \vec{x}_{\frac{a}{r}}\cdot \vec{y}_{(k,l)} \right),
	\end{equation*}
	where
	\begin{equation*} 
		\vec{x}_{\frac{a}{r}}=\left( \frac{\overline{a}}{r}, \frac{\overline{a}c}{r}, \frac{1}{\sqrt{\mu r^3}} \frac{\kappa}{\sqrt{\mu r^3}} \right)
	\end{equation*}
	(see \cite[Subsection 4.4]{LY2023} for the meaning of the notations) and
	\begin{equation*}
		\vec{y}_{(k,l)}=\left(k,lk,l\sqrt{k},\frac{l}{\sqrt{k}}\right)
	\end{equation*}
	are vectors in $\mathbb R^4$. Here, we have omitted the constant multiples in front and those negligible error terms. Applying Bourgain and Watt's variant of the double large sieve inequality in \cite[Section 5]{BW2018} reduces the problem into the following first and second spacing problems.

	The first spacing problem seeks for the bound of
	\begin{equation*}
		G_q:=\left\|\sum_{k\sim K}\sum_{l\sim L} a_{kl}e(lx_1+klx_2+l\sqrt{k}x_3)\right\|_{L^q_{\#}\left[|x_1|\le 1,|x_2|\le 1,|x_3|\le \frac{1}{\eta L\sqrt{K}}\right]},
	\end{equation*}
	where $a_{kl}$ are arbitrary coefficients such that $|a_{kl}|\le 1$, $q\ge 4$,  the $L^q_{\#}$-norm is given by
	\begin{equation*}
		\|f\|_{L^p_{\#}(B)}=\left(\frac{1}{|B|}\int_{B}|f|^p\right)^{1/p},
	\end{equation*}
	the parameters $K,L$ are integers, $\eta>0$, and they satisfy
	\begin{equation*} 
		1\le L<K\le \frac{1}{\eta}\le KL.
	\end{equation*}
	One would directly resort to \cite[Proposition 3.1]{LY2023} for this part which is irrelevant to the assumption \eqref{ex cond} at all. It is in this part one can combine Guth and Maldague's work on the small cap decoupling theory for cones in \cite{GM:2022} with results on some diophantine counting problems.

	The second spacing problem asks for the number of pairs $(a/r, a_1/r_1)$ with $a,a_1\asymp A$ and $r,r_1\asymp Q$ such that
	\begin{align*}
		\left\|\frac{\overline{a}}{r}-\frac{\overline{a_1}}{r_1} \right\| & \lesssim \frac{1}{KL},
		\\ \left\|\frac{\overline{a}c}{r}-\frac{\overline{a_1}c_1}{r_1} \right\| & \lesssim \frac{1}{L},
		\\ \left| \frac{1}{\sqrt{\mu r^3}}-\frac{1}{\sqrt{\mu_1 r_1^3}} \right| & \lesssim \frac{1}{L\sqrt{K}},
		\\ \left| \frac{\kappa}{\sqrt{\mu r^3}}-\frac{\kappa_1}{\sqrt{\mu_1 r_1^3}} \right| & \lesssim \frac{\sqrt{K}}{L}.
	\end{align*}
	They can be further simplified into the form
	\begin{align*}
		\left\|\frac{\overline{a}}{r}-\frac{\overline{a_1}}{r_1} \right\| & \le \Delta_1  \textrm{ with $\Delta_1$ much less than $1$,} 
		\\ \left\|\frac{\overline{a}c}{r}-\frac{\overline{a_1}c_1}{r_1} \right\| & \le \Delta_2,  
		\\ \left| \frac{\mu_1 r_1^3}{\mu r^3} -1 \right| & \le \Delta_3,   
		\\ | \kappa-\kappa_1 | & \le \Delta_4  
	\end{align*}
	with parameters $\Delta_i$ properly chosen. One can then use Huxley's \cite[Lemmas 3.3 and 3.4]{Huxley:2003}  for this part. As a matter of fact, the assumption \eqref{ex cond} is only needed in the case $M\gtrsim T^{\frac{181}{328}}(\log T)^{\frac{2907}{45920}}$ to ensure these Huxley's results on the second spacing problem to hold. Clearly, the parameter $M$ in \eqref{range a} falls outside this specific range. Hence this part is irrelevant to  the assumption \eqref{ex cond} either.
	
	As a final step, by integrating results for the two parts above, one can prove \eqref{stf} in the same way as Li and the last author did for \cite[Theorem 1.2]{LY2023}.
\end{proof}


\appendix

\section{Useful asymptotics of Bessel functions}

For the convenience of readers we first quote Olver's uniform asymptotic expansions of Bessel functions of large order
(see \cite[p. 368--369]{abram:1972} or \cite{olver:1954}):
\begin{equation}
J_\nu(\nu z)\sim \left(\frac{4\zeta}{1-z^2}\right)^{\!\! 1/4}\! \! \left(\frac{\mathrm{Ai}(\nu^{2/3}\zeta)}{\nu^{1/3}}
\sum_{k=0}^{\infty}\frac{a_k(\zeta)}{\nu^{2k}}+\frac{\mathrm{Ai}^{\prime}(\nu^{2/3}\zeta)}{\nu^{5/3}}
\sum_{k=0}^{\infty}\frac{b_k(\zeta)}{\nu^{2k}}\right),  \label{jnuse111}
\end{equation}
\begin{equation}
Y_\nu(\nu z) \sim -\left(\frac{4\zeta}{1-z^2}\right)^{\!\! 1/4}\!\!\left(\frac{\mathrm{Bi}(\nu^{2/3}\zeta)}
{\nu^{1/3}}\sum_{k=0}^{\infty}\frac{a_k(\zeta)}{\nu^{2k}}+\frac{\mathrm{Bi}^{\prime}(\nu^{2/3}\zeta)}{\nu^{5/3}}
\sum_{k=0}^{\infty}\frac{b_k(\zeta)}{\nu^{2k}}\right),  \label{ynuse111}
\end{equation}
\begin{equation}\label{jnuse111NC}
J_\nu'(\nu z)\sim -\frac{2}{z}\left(\frac{1-z^2 }{4\zeta}\right)^{\!\! 1/4}\!\!\left(\frac{\mathrm{Ai}^{\prime}(\nu^{2/3}\zeta)}{\nu^{2/3}}
\sum_{k=0}^{\infty}\frac{d_k(\zeta)}{\nu^{2k}}+\frac{\mathrm{Ai}(\nu^{2/3}\zeta)}{\nu^{4/3}}
\sum_{k=0}^{\infty}\frac{c_k(\zeta)}{\nu^{2k}}\right)
\end{equation}
and
\begin{equation}\label{ynuse111NC}
Y_\nu'(\nu z) \sim \frac{2}{z}\left(\frac{1-z^2 }{4\zeta}\right)^{\! \! 1/4}\!\!\left(\frac{\mathrm{Bi}^{\prime}(\nu^{2/3}\zeta)}{\nu^{2/3}}
\sum_{k=0}^{\infty}\frac{d_k(\zeta)}{\nu^{2k}}+\frac{\mathrm{Bi}(\nu^{2/3}\zeta)}
{\nu^{4/3}}\sum_{k=0}^{\infty}\frac{c_k(\zeta)}{\nu^{2k}}\right),
\end{equation}
where the notation $\sim$ is as defined by 3.6.15 in \cite[P. 15]{abram:1972} and $\zeta=\zeta(z)$ is given by
\begin{equation}
\frac{2}{3}(-\zeta)^{3/2}=\int_{1}^z\frac{\sqrt{t^2-1}}{t}\,\mathrm{d}t=
\sqrt{z^2-1}-\arccos\left(\frac{1}{z}\right)\label{def-zeta1}
\end{equation}or
\begin{equation}
\frac{2}{3}\zeta^{3/2}=\int_{z}^1\frac{\sqrt{1-t^2}}{t}\,\mathrm{d}t=\ln \frac{1+\sqrt{1-z^2}}{z}-\sqrt{1-z^2}.\label{def-zeta2}
\end{equation}
Here the branches are chosen so that $\zeta$ is real when $z$ is positive. $\mathrm{Ai}$ and $\mathrm{Bi}$ denote the Airy functions of the first and second kind. For the definitions and sizes of the coefficients $a_k$, $b_k$, $c_k$ and $d_k$ see \cite[p. 368--369]{abram:1972}. In particular it is known that $a_0(\zeta)=d_0(\zeta)=1$, $b_0(\zeta)$ is bounded, and if $\zeta$ is bounded above then $c_0(\zeta)$ is bounded.
It is easy to check the following expansions of \eqref{def-zeta1} and \eqref{def-zeta2}.
If $z\rightarrow 1+$ then
\begin{equation}\label{bound-zeta+}
(-\zeta(z))^{3/2}=\sqrt{2}(z-1)^{3/2}-\frac{9\sqrt{2}}{20}(z-1)^{5/2}+O\left((z-1)^{7/2}\right).
\end{equation}
If $z\rightarrow 1-$ then
\begin{equation}\label{bound-zeta-}
(\zeta(z))^{3/2}=\sqrt{2}(1-z)^{3/2}+\frac{9\sqrt{2}}{20}(1-z)^{5/2}+O\left((1-z)^{7/2}\right).
\end{equation}


We also have the following three lemmas about asymptotics of Bessel functions. Recall that $g(x)=\left(\sqrt{1-x^2}-x\arccos x\right)/\pi$.

\begin{lemma} \label{app-1}
For any $c>0$ and $\nu\ge 0$, if $x\geq \max\{(1+c)\nu, 10\}$ then
\begin{equation}\label{jnasy}
J_{\nu}(x)=\left(\frac{2}{\pi}\right)^{1/2}\left(x^2-\nu^2\right)^{-1/4} \left(\sin\left( \pi
xg\left(\frac{\nu}{x}\right)+\frac{\pi}{4}\right)+O_c\left(x^{-1}\right)\right),
\end{equation}
\begin{equation}\label{ynasy}
Y_{\nu}(x)=\left(\frac{2}{\pi}\right)^{1/2}\left(x^2-\nu^2\right)^{-1/4} \left(\sin\left( \pi
xg\left(\frac{\nu}{x}\right)-\frac{\pi}{4}\right)+O_c\left(x^{-1}\right)\right),
\end{equation}
\begin{equation}\label{jnasyNC}
J_{\nu}'(x)=\left(\frac{2}{\pi}\right)^{\! 1/2}\!\! x^{-1}\!\left(x^2-\nu^2\right)^{1/4} \!\left(\sin\left( \pi
xg\left(\frac{\nu}{x}\right)+\frac{3\pi}{4}\right)+O_c\left(x^{-1}\right)\right)
\end{equation}
and
\begin{equation}\label{ynasyNC}
Y_{\nu}'(x)=\left(\frac{2}{\pi}\right)^{\! 1/2}\!\!x^{-1}\!\!\left(x^2-\nu^2\right)^{1/4} \!\!\left(\sin\left( \pi
xg\left(\frac{\nu}{x}\right)+\frac{\pi}{4}\right)+O_c\left(x^{-1}\right)\right).
\end{equation}
\end{lemma}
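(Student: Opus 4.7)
The plan is to derive all four asymptotics from Olver's uniform expansions \eqref{jnuse111}--\eqref{ynuse111NC} together with the classical Hankel large-$x$ expansion for uniformly bounded $\nu$. Setting $z=x/\nu$, the hypothesis $x\geq(1+c)\nu$ gives $z\geq 1+c$, and therefore $\zeta(z)$ defined by \eqref{def-zeta1} is negative with $|\zeta(z)|$ bounded below by a positive constant depending only on $c$. The Airy arguments $\nu^{2/3}\zeta$ are then strongly negative, so one inserts the large-negative-argument asymptotics of $\mathrm{Ai}$, $\mathrm{Ai}'$, $\mathrm{Bi}$, $\mathrm{Bi}'$ (A\&S 10.4.60--10.4.64) into the Olver series.

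The central identity will follow from \eqref{def-zeta1} upon multiplying by $\nu$:
\[
\tfrac{2}{3}\nu(-\zeta(z))^{3/2}=\nu\sqrt{z^2-1}-\nu\arccos(1/z)=\sqrt{x^2-\nu^2}-\nu\arccos(\nu/x)=\pi x\,g(\nu/x),
\]
so the Airy phase $\tfrac{2}{3}(\nu^{2/3}|\zeta|)^{3/2}+\pi/4$ coincides with $\pi xg(\nu/x)+\pi/4$. A short algebraic simplification of the amplitude factor $(4(-\zeta)/(z^2-1))^{1/4}\,\nu^{-1/3}\,\pi^{-1/2}(\nu^{2/3}|\zeta|)^{-1/4}$ collapses to $\sqrt{2/\pi}\,(x^2-\nu^2)^{-1/4}$, producing the prefactor in \eqref{jnasy}--\eqref{ynasy}; an analogous manipulation applied to \eqref{jnuse111NC}--\eqref{ynuse111NC}, with the factor $(2/z)$ and the extra $\nu^{2/3}$-power coming from $\mathrm{Ai}'$, yields the derivative prefactor $\sqrt{2/\pi}\,x^{-1}(x^2-\nu^2)^{1/4}$. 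The four phase offsets $\pm\pi/4$ and $3\pi/4$ then emerge from elementary trigonometric identities such as $-\cos(\theta+\pi/4)=\sin(\theta-\pi/4)$ and $\cos(\theta+\pi/4)=\sin(\theta+3\pi/4)$.

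For the case of uniformly bounded $\nu$ (in particular $\nu=0$), I would fall back on Hankel's classical expansion $J_\nu(x)=\sqrt{2/(\pi x)}\,[\cos(x-\nu\pi/2-\pi/4)+O_\nu(x^{-1})]$ and its analogues; the Taylor expansion $g(y)=1/\pi-y/2+O(y^3)$ at $y=0$ gives $\pi xg(\nu/x)+\pi/4=x-\nu\pi/2+\pi/4+O_\nu(x^{-1})$, and $(x^2-\nu^2)^{-1/4}=x^{-1/2}(1+O_\nu(x^{-2}))$, so after one trigonometric identity the Hankel form matches the claim. The main obstacle will be securing the uniform $O_c(x^{-1})$ remainder across the full range $z\geq 1+c$: in Olver's expansion the secondary term carries a relative factor $\nu^{-1}|\zeta|^{1/2}b_0(\zeta)$, and $(-\zeta(z))^{1/2}$ grows like $z^{1/3}$ as $z\to\infty$, so Olver alone gives only $O(\nu^{-1})$, which equals $O_c(x^{-1})$ only when $z$ stays bounded. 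The remedy is to fix a cutoff $Z_0>1+c$: Olver handles $1+c\leq z\leq Z_0$ (where $x\asymp_c\nu$), while Hankel's expansion with sufficiently many terms handles $z>Z_0$ (where $\nu\leq x/Z_0$ makes the error $(\nu^2+1)^{O(1)}/x^{O(1)}$ acceptable). Together these two regimes exhaust the hypothesis $x\geq\max\{(1+c)\nu,10\}$.
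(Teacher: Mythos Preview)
Your approach differs from the paper's: the paper states that these asymptotics follow from the method of stationary phase applied to an integral representation of the Bessel functions (citing \cite[Lemma~2.1]{Guo} and \cite[Appendix~A]{GMWW:2019}), whereas you propose to derive them from Olver's uniform expansions together with Hankel's classical expansion.

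Your Olver computation of phase and amplitude is correct, but your proposed splitting has a genuine gap in the regime where both $\nu$ and $z=x/\nu$ are large (say $\sqrt{x}\ll\nu<x/Z_0$). In that regime Hankel's expansion does \emph{not} match the claimed formula: the Hankel phase $x-\nu\pi/2-\pi/4$ differs from $\pi x\,g(\nu/x)$ by
\[
\pi x\,g(\nu/x)-\bigl(x-\tfrac{\nu\pi}{2}\bigr)=\frac{\nu^2}{2x}+O\!\left(\frac{\nu^4}{x^3}\right),
\]
which is not $O_c(x^{-1})$ unless $\nu=O(1)$; and the Hankel amplitude corrections after $n$ terms are of order $(\nu^2/x)^n$, which do not decay when $\nu\asymp x$. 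So ``Hankel with sufficiently many terms'' cannot rescue the argument for large $\nu$.

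The repair is that your pessimism about Olver is unfounded. After substituting the Airy asymptotics you obtain the Debye form, in which the first correction is $u_1(\cot\beta)/\nu$ with $x=\nu\sec\beta$ and $u_1(t)=(3t-5t^3)/24$. For $z\geq 1+c$ one has $\cot\beta\asymp_c 1/z$, hence $u_1(\cot\beta)\lesssim_c 1/z$ and the correction is $\lesssim_c 1/(\nu z)=1/x$. Equivalently, $b_0(\zeta)$ is not merely bounded but satisfies $|\zeta|^{1/2}b_0(\zeta)\to 0$ as $\zeta\to-\infty$, so the secondary Olver term is $O_c(x^{-1})$ for all $z\geq 1+c$. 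Thus Olver alone covers $\nu\geq 1$, and Hankel is only needed for the genuinely bounded-$\nu$ case where your Taylor-expansion matching is valid.
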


These standard asymptotics can be proved by using the method of stationary phase. All proofs are similar. For example see \cite[Lemma 2.1]{Guo} for a proof of \eqref{jnasy} and \cite[Appendix A]{GMWW:2019} for a proof of \eqref{ynasy} with non-negative integers $\nu$.

\begin{lemma} \label{app-2}
For any $c>0$ and all sufficiently large $\nu$, if $\nu < x\le(1+c)\nu$ and $xg(\nu/x)\geq 1$ then
\begin{equation}\label{Bessel22}
J_{\nu}(x)=\frac{(2/\pi )^{1/2}}{(x^2-\nu^2)^{1/4}}\left(\sin\left(\pi
xg\left(\frac{\nu}{x}\right)+\frac{\pi}{4}\right)+O\left(\left(xg\left(\frac{\nu}{x}\right)\right)^{-1}\right)\right),
\end{equation}
\begin{equation}\label{Bessel221}
Y_{\nu}(x)=\frac{(2/\pi )^{1/2}}{(x^2-\nu^2)^{1/4}}\left(\sin\left(\pi
xg\left(\frac{\nu}{x}\right)-\frac{\pi}{4}\right)+O\left(\left(xg\left(\frac{\nu}{x}\right)\right)^{-1}\right)\right),
\end{equation}
\begin{equation}\label{Bessel22NC}
J'_{\nu}(x)=\left(\frac{2}{\pi}\right)^{\!\! 1/2}\frac{(x^2-\nu^2)^{1/4}}{x}\left(\sin\left(\pi
xg\left(\frac{\nu}{x}\right)+\frac{3\pi}{4}\right)+O\left(\left(xg\left(\frac{\nu}{x}\right)\right)^{-1}\right)\right)
\end{equation}
and
\begin{equation}\label{Bessel221NC}
Y'_{\nu}(x)=\left(\frac{2}{\pi}\right)^{\!\! 1/2}\frac{(x^2-\nu^2)^{1/4}}{x}\left(\sin\left(\pi
xg\left(\frac{\nu}{x}\right)+\frac{\pi}{4}\right)+O\left(\left(xg\left(\frac{\nu}{x}\right)\right)^{-1}\right)\right).
\end{equation}
\end{lemma}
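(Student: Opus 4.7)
The plan is to apply Olver's uniform asymptotic expansions \eqref{jnuse111}--\eqref{ynuse111NC}, then replace the Airy functions and their derivatives by their large-argument asymptotics, which are legitimate in the prescribed range because of the hypothesis $xg(\nu/x)\geq 1$.

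First I would set $z=x/\nu\in(1,1+c]$, invoke \eqref{def-zeta1}, and record the identity
\begin{equation*}
xg(\nu/x)=\frac{\nu}{\pi}\!\left(\sqrt{z^{2}-1}-\arccos(1/z)\right)=\frac{2\nu}{3\pi}\left(-\zeta(z)\right)^{3/2},
\end{equation*}
so that with $t:=\nu^{2/3}(-\zeta)\geq 0$ and $\xi:=\tfrac{2}{3}t^{3/2}$ one has $\xi=\pi xg(\nu/x)$. The assumption $xg(\nu/x)\geq 1$ thus translates into $t\geq (3\pi/2)^{2/3}$, i.e.\ the argument $-\nu^{2/3}\zeta$ of the Airy functions is bounded below by a positive absolute constant. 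Since $z\leq 1+c$ is bounded, $-\zeta$ is bounded above as well, so $t\asymp \nu^{2/3}(-\zeta)$ and $\xi\asymp\nu(-\zeta)^{3/2}$ with two-sided control.

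Next I would insert the standard large-argument asymptotics of the Airy functions (see \cite[10.4.60--10.4.67]{abram:1972}),
\begin{equation*}
\mathrm{Ai}(-t)=\pi^{-1/2}t^{-1/4}\!\left(\sin(\xi+\pi/4)+O(\xi^{-1})\right),\qquad
\mathrm{Bi}(-t)=\pi^{-1/2}t^{-1/4}\!\left(\cos(\xi+\pi/4)+O(\xi^{-1})\right),
\end{equation*}
and the analogous ones for $\mathrm{Ai}'(-t)$, $\mathrm{Bi}'(-t)$ with prefactor $t^{1/4}$ and phase $\xi+\pi/4$, into the leading term of \eqref{jnuse111}--\eqref{ynuse111}. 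Using $(4(-\zeta)/(z^{2}-1))^{1/4}\cdot\nu^{-1/2}(-\zeta)^{-1/4}=\sqrt{2}\,(x^{2}-\nu^{2})^{-1/4}$ and $\xi+\pi/4=\pi xg(\nu/x)+\pi/4$, the main terms collapse exactly to the sines claimed in \eqref{Bessel22} and \eqref{Bessel221}. For \eqref{Bessel22NC} and \eqref{Bessel221NC} the same substitution in \eqref{jnuse111NC}--\eqref{ynuse111NC} gives a prefactor $\pm(2/x)((z^{2}-1)/(4(-\zeta)))^{1/4}\cdot\nu^{-2/3}(-\zeta)^{1/4}\nu^{1/6}=\pm\sqrt{2/\pi}\,x^{-1}(x^{2}-\nu^{2})^{1/4}$ and the phase shift from $\mathrm{Ai}'$/$\mathrm{Bi}'$ converts $\sin$/$\cos$ into the correct $\sin(\pi xg\pm 3\pi/4)$ and $\sin(\pi xg+\pi/4)$ respectively.

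The remaining work is error bookkeeping. There are three sources of remainder: (i) the relative $O(\xi^{-1})=O((xg(\nu/x))^{-1})$ from the Airy asymptotics, which is exactly what is claimed; (ii) the companion term $\mathrm{Ai}'(\nu^{2/3}\zeta)/\nu^{5/3}\cdot b_{0}(\zeta)$ (and its $\mathrm{Bi}'$ analogue) in \eqref{jnuse111}--\eqref{ynuse111}; (iii) the higher-order coefficients $a_{k},b_{k},c_{k},d_{k}$ with $k\geq 1$, which are suppressed by powers of $\nu^{-2}$. For (ii) one uses $\mathrm{Ai}'(-t)=O(t^{1/4})$ and the boundedness of $b_{0}$; after combining with the prefactor the relative size compared to the main term is $O(\nu^{-1}(-\zeta)^{1/2})$, which is bounded by $C(xg(\nu/x))^{-1}$ provided $(-\zeta)^{2}\lesssim 1$, i.e.\ precisely in our range $z\leq 1+c$. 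For (iii) one simply checks that each extra factor $\nu^{-2}$ is majorized by $(xg(\nu/x))^{-1}\asymp\nu^{-1}(-\zeta)^{-3/2}$, which holds because $(-\zeta)^{3/2}\ll \nu$. The main obstacle is precisely this bookkeeping of (ii): one has to use the boundedness of $z$ (equivalently of $-\zeta$) at exactly the right moment to convert an estimate that is naturally controlled by $\nu^{-1}$ into one controlled by $(xg(\nu/x))^{-1}$, and without the smallness assumption $z\leq 1+c$ the secondary Airy term would dominate near the transition region.
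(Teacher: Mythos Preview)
Your proposal is correct and follows exactly the approach indicated in the paper: the paper only states that these asymptotics are consequences of Olver's uniform expansions \eqref{jnuse111}--\eqref{ynuse111NC} combined with the well-known large-argument asymptotics of Airy functions, referring to \cite[Lemma~2.2]{Guo} for a worked example. Your write-up carries out precisely this computation, including the identification $\pi xg(\nu/x)=\tfrac{2}{3}\nu(-\zeta)^{3/2}$ and the error bookkeeping showing that the secondary Airy term and the $k\ge1$ coefficients are absorbed by $O((xg(\nu/x))^{-1})$ thanks to the boundedness of $-\zeta$ on $z\le 1+c$.
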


The asymptotics in Lemma \ref{app-2} are consequences of Olver's asymptotics of Bessel functions  and well-known asymptotics of Airy functions. Proofs are similar. For example see \cite[Lemma 2.2]{Guo} for a proof of \eqref{Bessel22}.


The following is an analogue of the formula 9.3.4 in \cite[p. 366]{abram:1972}.

\begin{lemma}\label{9.3.4analogue}
For any $\epsilon>0$ and all sufficiently large $\nu$,
\begin{enumerate}
\item if $0\leq w\leq \nu^{\epsilon}$ then
\begin{align*}
J_\nu\left(\nu+w\nu^{1/3}\right)&=2^{1/3}\nu^{-1/3}\mathrm{Ai}\left(-2^{1/3}w\right)+O\left(\nu^{-1+2.25\epsilon}\right),\\
Y_\nu\left(\nu+w\nu^{1/3}\right)&=-2^{1/3}\nu^{-1/3}\mathrm{Bi}\left(-2^{1/3}w\right)+O\left(\nu^{-1+2.25\epsilon}\right),\\
J'_\nu\left(\nu+w\nu^{1/3}\right)&=-2^{2/3}\nu^{-2/3}\mathrm{Ai}'\left(-2^{1/3}w\right)+O\left(\nu^{-4/3+2.75\epsilon}\right),\\
Y'_\nu\left(\nu+w\nu^{1/3}\right)&=2^{2/3}\nu^{-2/3}\mathrm{Bi}'\left(-2^{1/3}w\right)+O\left(\nu^{-4/3+2.75\epsilon}\right);
\end{align*}

\item if $-\nu^{\epsilon}\leq w\leq 0$ then
\begin{align*}
J_\nu\left(\nu+w\nu^{1/3}\right)&=2^{1/3}\nu^{-1/3}\mathrm{Ai}\left(-2^{1/3}w\right)\left(1+O\left(\nu^{-2/3+2.5\epsilon}\right)\right),\\
Y_\nu\left(\nu+w\nu^{1/3}\right)&=-2^{1/3}\nu^{-1/3}\mathrm{Bi}\left(-2^{1/3}w\right)\left(1+O\left(\nu^{-2/3+2.5\epsilon}\right)\right),\\
J'_\nu\left(\nu+w\nu^{1/3}\right)&=-2^{2/3}\nu^{-2/3}\mathrm{Ai}'\left(-2^{1/3}w\right)\left(1+O\left(\nu^{-2/3+2.5\epsilon}\right)\right),\\
Y'_\nu\left(\nu+w\nu^{1/3}\right)&=2^{2/3}\nu^{-2/3}\mathrm{Bi}'\left(-2^{1/3}w\right)\left(1+O\left(\nu^{-2/3+2.5\epsilon}\right)\right).
\end{align*}
\end{enumerate}
\end{lemma}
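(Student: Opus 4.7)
\medskip
\noindent\textbf{Proof proposal for Lemma \ref{9.3.4analogue}.}
The plan is to apply Olver's uniform asymptotic expansions \eqref{jnuse111}--\eqref{ynuse111NC} at $z=1+w\nu^{-2/3}$ and then push the Airy-function arguments from $\nu^{2/3}\zeta(z)$ to $-2^{1/3}w$, carefully tracking absolute errors in the oscillatory regime ($w\ge 0$) and multiplicative errors in the exponentially decaying/growing regime ($w\le 0$). First I would use \eqref{bound-zeta+} for $w\ge 0$ and \eqref{bound-zeta-} for $w\le 0$ to obtain the key expansion
\begin{equation*}
\nu^{2/3}\zeta(1+w\nu^{-2/3})=-2^{1/3}w+O\bigl(w^{2}\nu^{-2/3}\bigr),
\end{equation*}
valid uniformly for $|w|\le\nu^{\epsilon}$ (and hence $|w|\nu^{-2/3}\to 0$). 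Simultaneously, since $1-z^{2}=-2w\nu^{-2/3}(1+O(w\nu^{-2/3}))$ and $\zeta=-2^{1/3}w\nu^{-2/3}(1+O(w\nu^{-2/3}))$, the prefactors in Olver's expansion satisfy
\begin{equation*}
\left(\frac{4\zeta}{1-z^{2}}\right)^{\!1/4}\!\!=2^{1/3}\bigl(1+O(|w|\nu^{-2/3})\bigr),\qquad \left(\frac{1-z^{2}}{4\zeta}\right)^{\!1/4}\!\!=2^{-1/3}\bigl(1+O(|w|\nu^{-2/3})\bigr),
\end{equation*}
while the factor $2/z$ in \eqref{jnuse111NC}--\eqref{ynuse111NC} is $2(1+O(|w|\nu^{-2/3}))$. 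The higher-order tails in Olver's expansion contribute a multiplicative $O(\nu^{-2})$ since $a_{0}=d_{0}=1$ and $b_{0}, c_{0}$ are bounded in the range under consideration, so the main work reduces to replacing $\mathrm{Ai}(\nu^{2/3}\zeta)$, $\mathrm{Bi}(\nu^{2/3}\zeta)$, $\mathrm{Ai}'(\nu^{2/3}\zeta)$, $\mathrm{Bi}'(\nu^{2/3}\zeta)$ by their values at $-2^{1/3}w$.

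For part (1), with $0\le w\le\nu^{\epsilon}$, the Airy arguments lie in the oscillatory region $(-\infty,0]$, where $|\mathrm{Ai}(t)|,|\mathrm{Bi}(t)|\lesssim(1+|t|)^{-1/4}$ and $|\mathrm{Ai}'(t)|,|\mathrm{Bi}'(t)|\lesssim(1+|t|)^{1/4}$ (see \cite[P.\ 448--449]{abram:1972}); moreover $\mathrm{Ai}''(t)=t\mathrm{Ai}(t)$ gives $|\mathrm{Ai}''(t)|\lesssim(1+|t|)^{3/4}$, and the same for Bi. By the mean value theorem applied to $\mathrm{Ai}$ on the segment joining $-2^{1/3}w$ and $\nu^{2/3}\zeta$, which has length $O(w^{2}\nu^{-2/3})$,
\begin{equation*}
\mathrm{Ai}(\nu^{2/3}\zeta)=\mathrm{Ai}(-2^{1/3}w)+O\bigl(w^{2}\nu^{-2/3}\cdot(1+w)^{1/4}\bigr)=\mathrm{Ai}(-2^{1/3}w)+O\bigl(w^{9/4}\nu^{-2/3}\bigr),
\end{equation*}
and similarly $\mathrm{Ai}'(\nu^{2/3}\zeta)=\mathrm{Ai}'(-2^{1/3}w)+O(w^{11/4}\nu^{-2/3})$, with analogous expansions for $\mathrm{Bi},\mathrm{Bi}'$. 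Combining these with the prefactor expansions above and absorbing the $O(|w|\nu^{-2/3})$ multiplicative error (which, when paired with $|\mathrm{Ai}(-2^{1/3}w)|\lesssim 1$ and $|\mathrm{Ai}'(-2^{1/3}w)|\lesssim\nu^{\epsilon/4}$, is dominated by the Taylor-remainder errors) yields the bounds $O(\nu^{-1+2.25\epsilon})$ and $O(\nu^{-4/3+2.75\epsilon})$ claimed.

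For part (2), with $-\nu^{\epsilon}\le w\le 0$, the Airy arguments lie in the decaying/growing region $[0,\infty)$, so $\mathrm{Ai}(-2^{1/3}w)$ is exponentially small and $\mathrm{Bi}(-2^{1/3}w)$ exponentially large; we therefore cannot use absolute errors, and instead record the differences as multiplicative. The key inputs are the uniform ratio bounds
\begin{equation*}
\left|\frac{\mathrm{Ai}'(t)}{\mathrm{Ai}(t)}\right|+\left|\frac{\mathrm{Bi}'(t)}{\mathrm{Bi}(t)}\right|\lesssim (1+t)^{1/2}\qquad(t\ge 0),
\end{equation*}
which follow from the standard asymptotics of $\mathrm{Ai},\mathrm{Ai}',\mathrm{Bi},\mathrm{Bi}'$ for large positive argument. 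Integrating this bound along the segment of length $O(w^{2}\nu^{-2/3})$ joining $-2^{1/3}w$ to $\nu^{2/3}\zeta$ gives
\begin{equation*}
\mathrm{Ai}(\nu^{2/3}\zeta)=\mathrm{Ai}(-2^{1/3}w)\bigl(1+O(|w|^{5/2}\nu^{-2/3})\bigr),
\end{equation*}
and likewise for $\mathrm{Bi}$; the same argument applied with $(\mathrm{Ai}'')/(\mathrm{Ai}')=t\mathrm{Ai}/\mathrm{Ai}'$ (which is $O(t^{1/2})$) handles the derivatives. Since $|w|^{5/2}\nu^{-2/3}\le\nu^{-2/3+2.5\epsilon}$ and the prefactor error $O(|w|\nu^{-2/3})=O(\nu^{-2/3+\epsilon})$ is absorbed, the claimed multiplicative remainder follows.

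The main obstacle, and the reason I would be careful, is the transition from absolute to relative error across $w=0$: the Airy derivative $\mathrm{Ai}'(-2^{1/3}w)$ for large positive $w$ can be as large as $w^{1/4}$, so the naive absolute bound in part~(1) would be too weak if one substituted it into part~(2), and conversely the multiplicative formulation in part~(2) would fail at the oscillation zeros in part~(1). Keeping these two regimes strictly separated, and verifying that the $O(w^{2}\nu^{-2/3})$ shift in the Airy argument can be absorbed with the precise exponents $2.25\epsilon$ and $2.75\epsilon$ (resp.\ $2.5\epsilon$), is where the bookkeeping must be done with some care.
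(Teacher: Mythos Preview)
Your approach is correct and essentially the same as the paper's: apply Olver's uniform expansions at $z=1+w\nu^{-2/3}$, use \eqref{bound-zeta+}/\eqref{bound-zeta-} to get $\nu^{2/3}\zeta=-2^{1/3}w+O(w^{2}\nu^{-2/3})$ and the prefactor expansion, then shift the Airy arguments. The only cosmetic difference is in part~(2): the paper applies the mean value theorem directly with the pointwise comparison $|\mathrm{Ai}''(\eta)|=|\eta\,\mathrm{Ai}(\eta)|\lesssim |w|^{1/2}|\mathrm{Ai}'(-2^{1/3}w)|$ (using that $\eta$ lies above $-2^{1/3}w$ so the exponential factors match), whereas you integrate the logarithmic derivative bound $|\mathrm{Ai}''/\mathrm{Ai}'|\lesssim t^{1/2}$; these are equivalent maneuvers yielding the same $O(|w|^{5/2}\nu^{-2/3})$ multiplicative error.
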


\begin{proof}
All these asymptotics can be proved similarly. As an example we sketch the proof of $J'_\nu$ below. See \cite[Lemma A.1]{GMWW:2019} for the proof of $J_{\nu}$ and $Y_{\nu}$.

Let us first consider the case $-\nu^{\epsilon}\leq w<0$. (If $\omega=0$, the desired asymptotics follows immediately from  \eqref{jnuse111NC}.) If we write $z=1+w\nu^{-2/3}$, then by \eqref{jnuse111NC} we have
\begin{equation*}
\begin{split}
&J_{\nu}'\left(\nu+w\nu^{1/3}\right)=J_{\nu}'(\nu z)= \\
&-\frac{2}{z}\left(\frac{1-z^2 }{4\zeta}\right)^{\!\! 1/4}\nu^{-2/3}\left(\mathrm{Ai}'\left(\nu^{2/3}\zeta\right)
\left(1+O\left(\nu^{-2}\right) \right)+\mathrm{Ai}\left(\nu^{2/3}\zeta\right)O\left(\nu^{-2/3}\right)\right),
\end{split}
\end{equation*}
where $\zeta=\zeta(z)>0$, determined by \eqref{def-zeta2}, is such that
\begin{equation}
\nu^{2/3}\zeta=-2^{1/3}w+\frac{3}{10}2^{1/3}w^2 \nu^{-2/3}\left(1+O\left(|w|\nu^{-2/3}\right)\right) \label{app-3}
\end{equation}
implied by \eqref{bound-zeta-}. Thus
\begin{equation}
\left(\frac{1-z^2}{4\zeta}\right)^{1/4}=2^{-1/3}\left(1+O\left(|w|\nu^{-2/3}\right) \right).\label{firstfactor}
\end{equation}

By using the well-known asymptotics of $\mathrm{Ai}(r)$ and $\mathrm{Ai}'(r)$ for $r>0$ (see \cite[p. 448]{abram:1972}), we get
\begin{equation*}
\left|\mathrm{Ai}''\left(\eta\right)\right|=\left|\eta\mathrm{Ai}\left(\eta\right) \right|\lesssim |w|^{1/2}\left|\mathrm{Ai}'\left(-2^{1/3}w\right)\right|
\end{equation*}
for $-2^{1/3}\omega\leq \eta\leq \nu^{2/3}\zeta$.  Hence, by the mean value theorem, we get
\begin{equation}
\mathrm{Ai}'\left(\nu^{2/3}\zeta\right)=\mathrm{Ai}'\left(-2^{1/3}w\right)+
O\left(\mathrm{Ai}'\left(-2^{1/3}w\right)\nu^{-2/3+2.5\varepsilon} \right). \label{app-4}
\end{equation}

Applying \eqref{app-3}, \eqref{firstfactor} and \eqref{app-4} to $J_{\nu}'\left(\nu+w\nu^{1/3}\right)$ gives the desired asymptotics.

To handle the case $0< w\leq \nu^{\epsilon}$ we just need to slightly modify the first part. Again we write $z=1+w\nu^{-2/3}$. The corresponding $\zeta=\zeta(z)<0$, determined by \eqref{def-zeta1}, still satisfies \eqref{app-3} and \eqref{firstfactor}. By using the well-known asymptotics of $\mathrm{Ai}(-r)$ and $\mathrm{Ai}'(-r)$ for $r>0$ (see \cite[p. 448--449]{abram:1972}), we get
\begin{equation*}
\mathrm{Ai}'\left(\nu^{2/3}\zeta\right)=\mathrm{Ai}'\left(-2^{1/3}w\right)+O\left(\nu^{-2/3+2.75\varepsilon} \right).
\end{equation*}
We then easily get the desired asymptotics.
\end{proof}


\subsection*{Acknowledgments}
Jingwei Guo would like to thank Wolfgang M\"uller for helpful communication. J. Guo is supported by the NSFC (no. 12341102). T. Jiang is supported by the University Annual Scientific Research Plan of Anhui Province (2022AH050173) and the Talent  Cultivation Foundation of Anhui Normal University (2022xjxm036).   Z. Wang is supported by the National Key R and D Program of China (2020YFA0713100) and the NSFC (no. 12171446). X. Yang is partially supported by the Campus Research Board Award RB24028 of UIUC.


\end{document}